\documentclass[11pt]{article}

\usepackage{amssymb, amsmath, amsthm, graphicx}
\usepackage[left=1in,top=1in,right=1in]{geometry}

\usepackage{subfigure}

\date{}

\theoremstyle{plain}
      \newtheorem{theorem}{Theorem}[section]
      \newtheorem{lemma}[theorem]{Lemma}
      
      \newtheorem{observation}[theorem]{Observation}
      \newtheorem{corollary}[theorem]{Corollary}
      
      \newtheorem{conjecture}[theorem]{Conjecture}
\theoremstyle{definition}

\theoremstyle{remark}

\def\hom{\mbox{\rm hom}}
\def\twr{\mbox{\rm twr}}
\def\rem{\mbox{\rm rem}}

\title{Ramsey-type results for semi-algebraic relations}

\author{David Conlon\thanks{Mathematical Institute, Oxford OX1 3LB, UK. Supported by a Royal Society University Research Fellowship. Email: {\tt 
david.conlon@maths.ox.ac.uk}.} \and Jacob Fox \thanks{Massachusetts Institute of Technology, Cambridge, MA. Supported by a Simons Fellowship, NSF grant DMS 
1069197, and by an MIT NEC Corporation Award. Email: {\tt fox@math.mit.edu}} \and J\'anos Pach\thanks{EPFL, Lausanne and Courant Institute, New York, NY. Supported 
by Hungarian Science Foundation EuroGIGA Grant OTKA NN 102029, by Swiss National Science Foundation Grant 200021-125287/1, and by NSF Grant CCF-08-30272. Email: 
{\tt pach@cims.nyu.edu}.}\and Benny Sudakov\thanks{Department of Mathematics, UCLA, Los Angeles, CA 90095. 
Research supported in part by NSF grant DMS-1101185, by AFOSR MURI grant FA9550-10-1-0569 and by a USA-Israel BSF grant. Email: {\tt bsudakov@math.ucla.edu}.} 
\and Andrew Suk\thanks{Massachusetts Institute of Technology, Cambridge, MA. Supported by an NSF Postdoctoral 
Fellowship and by Swiss National Science Foundation Grant 200021-125287/1. Email: {\tt asuk@math.mit.edu}.} }

\begin{document}

\maketitle

\begin{abstract} A $k$-ary semi-algebraic relation $E$ on $\mathbb{R}^d$ is a subset of $\mathbb{R}^{kd}$, the set of $k$-tuples of points in $\mathbb{R}^d$, 
which is determined by a finite number of polynomial equations and inequalities in $kd$ real variables. The description complexity of such a relation is at most 
$t$ if the number of polynomials and their degrees are all bounded by $t$. A set $A\subset \mathbb{R}^d$ is called homogeneous if all or none of the $k$-tuples 
from $A$ satisfy $E$. A large number of geometric Ramsey-type problems and results can be formulated as questions about finding large homogeneous subsets of sets 
in $\mathbb{R}^d$ equipped with semi-algebraic relations.

In this paper we study Ramsey numbers for $k$-ary semi-algebraic relations of bounded complexity and give matching upper and lower bounds, showing that they grow 
as a tower of height $k-1$.  This improves on a direct application of Ramsey's theorem by one exponential and extends a result of Alon, Pach, Pinchasi, Radoi\v 
ci\'c, and Sharir, who proved this for $k=2$. We apply our results to obtain new estimates for some geometric Ramsey-type problems relating to order types and 
one-sided sets of hyperplanes. We also study the off-diagonal case, achieving some partial results.
\end{abstract}

\section{Introduction}

{\bf Background and motivation.}
The term ``Ramsey theory" refers to a large body of deep results in mathematics which have a common theme: ``Every large system contains a large well organized 
subsystem."  This is an area in which a great variety of techniques from many branches of mathematics are used and whose results are important not only to graph 
theory and combinatorics but also to logic, analysis, number theory, computer science, and geometry.

The \emph{Ramsey number} $R(n)$ is the least integer $N$ such that every red-blue coloring of the edges of the complete graph $K_N$ on $N$ vertices 
contains a monochromatic complete subgraph $K_n$ on $n$ vertices.  Ramsey's theorem \cite{R30}, in its simplest form, states that $R(n)$ exists for every $n$. 
Celebrated results of Erd\H{o}s \cite{erdos2} and Erd\H os and Szekeres \cite{erdos} imply that $2^{n/2} < R(n) < 2^{2n}$ for every integer $n>2$. Despite much 
attention over the last 65 years (see, e.g., \cite{david2}), the constant factors in the exponents have not been improved.

Although already for graph Ramsey numbers there are significant gaps between the lower and upper bounds, our knowledge of hypergraph Ramsey numbers is even weaker.  The Ramsey number $R_k(n)$ is the minimum $N$ such that every red-blue coloring of all unordered $k$-tuples of an $N$-element set contains a monochromatic subset of size $n$, where a set is called monochromatic if all its $k$-tuples have the same color.  Erd\H os, Hajnal, and Rado \cite{erdos3, rado} showed that there are positive constants $c$ and $c'$ such that

$$2^{cn^2} < R_3(n) < 2^{2^{c'n}}.$$

\noindent
They also conjectured that $R_3(n) > 2^{2^{cn}}$ for some constant $c > 0$ and Erd\H os offered a \$500 reward for a proof.  For $k\geq 4$, there is also a difference of one exponential between the known upper and lower bounds for $R_k(n)$, namely,

$$\twr_{k-1}(cn^2) \leq R_k(n) \leq \twr_k(c'n),$$

\noindent where the tower function $\twr_k(x)$ is defined by $\twr_1(x) = x$ and $\twr_{i + 1} = 2^{\twr_i(x)}$.

The study of $R_3(n)$ is particularly important for our understanding of hypergraph Ramsey numbers. Given any lower bound on $R_k(n)$ for $k\ge 3$, an ingenious construction of Erd\H os and Hajnal, called the {\em stepping-up lemma} (see \cite{conlon,conlon2,graham}), allows us to give a lower bound on $R_{k+1}(n)$ which is exponentially larger than the one on $R_k(n)$. In the other direction, Erd\H os and Rado \cite{rado} came up with a technique that gives an upper bound on $R_{k + 1}(n)$ which is exponential in a power of $R_k(n)$.  Therefore, closing the gap between the upper and lower bounds for $R_3(n)$ would also close the gap between the upper and lower bounds for $R_k(n)$ for all $k$.  There is some evidence that the growth rate of $R_3(n)$ is indeed double exponential: Erd\H os and Hajnal (see \cite{conlon,graham}) constructed a 4-coloring of the triples of the set $[2^{2^{cn}}]$ which does not contain a monochromatic subset of size $n$. This result is best possible up to the value of the constant  $c$.

Ramsey numbers were first used by Erd\H os and Szekeres to give a bound on a beautiful geometric question which asks
for the smallest integer $ES(n)$ such that any set of $ES(n)$ points in the plane in general position contains $n$ elements in convex position, that is, $n$ 
points that form the vertex set of a convex polygon? The following argument due to Tarsi shows that

$$ES(n) \leq R_3(n)\leq 2^{2^{c' n}}.$$

\noindent Indeed, let $P = \{p_1,...,p_N\}$ be an ordered set of $N=R_3(n)$ points in the plane in general position. Color a triple $(p_i,p_j,p_k)$ red if $p_i,p_j,p_k$ appear in clockwise order along the boundary of their convex hull and color it blue otherwise.  By the choice of $N$, there exists a subset $S\subset P$ of size $n$ that is monochromatic.  It is easy to see that this monochromatic subset $S$ must be in convex position.  However, since the coloring on the triples of $P$ is defined ``algebraically", one might expect that this bound is not tight.  Indeed, a much stronger bound on $ES(n)$ was obtained by Erd\H os and Szekeres \cite{erdos}: they showed that $ES(n) \leq 2^{2n}$.

The main result of the present paper is an exponential improvement on the upper bound for hypergraph Ramsey numbers for colorings defined algebraically (a precise 
definition is given below). In particular, this shows that the Tarsi argument for estimating $ES(n)$ discussed above naturally results in an 
exponential bound. We also give a construction which implies an almost matching lower bound. The proofs of these results for $k$-uniform hypergraphs are based on 
adaptations of the Erd\H os-Rado upper bound technique and the Erd\H os-Hajnal stepping-up method. The key step, when reducing the problem from $(k+1)$-uniform 
hypergraphs to $k$-uniform ones, is to ensure that the algebraic properties of the underlying relations may be preserved.

\medskip

\noindent
{\bf Ramsey numbers for semi-algebraic relations.} 
A Boolean function $\Phi(X_1,X_2,...,X_t)$ is an arbitrary mapping from variables $X_1,...,X_t$,
attaining values ``true" or ``false", to $\{0,1\}$.
A set $A\subset \mathbb{R}^d$ is \emph{semi-algebraic} if there are polynomials $f_1,f_2,...,f_t \in \mathbb{R}[x_1,...,x_d]$ and a Boolean function
$\Phi(X_1,X_2,...,X_t)$ such that

$$A = \left\{ x \in \mathbb{R}^d: \Phi(f_1(x) \geq 0, f_2(x) \geq 0,...,f_t(x) \geq 0)=1\right\}.$$

\noindent We say that a semi-algebraic set has \emph{description complexity at most $t$} if $d\leq t$, the number of equations and inequalities is at most $t$, and each polynomial $f_i$ has degree at most $t$.

Let $F = \{A_1,...,A_N\}$ be an ordered family of semi-algebraic sets in $\mathbb{R}^d$ such that each set has bounded description complexity.  We denote ${F\choose k}$ to be the set of all \emph{ordered} $k$-tuples $(A_{i_1},...,A_{i_k})$ such that $i_1 < \cdots < i_k$.  Now let $E\subset{F\choose k}$ be a relation on $F$.  Typical examples of a relation $E$ would be $k$-tuples having a non-empty intersection, or having a hyperplane transversal, or having a clockwise orientation, etc. (see \cite{noga,basu}).  Since many of these relations can be described as a Boolean combination of a constant number of variables, we will assume that $E$ is semi-algebraic in the following sense. There exists a constant $q$ depending only on the description complexity so that each set $A_i \in F$ can be represented by a point $A^{\ast}_i$ in $\mathbb{R}^q$.  Then the relation $E$ is \emph{semi-algebraic with complexity at most $t$} if there exists a semi-algebraic set $E^{\ast} \subset \mathbb{R}^{qk}$ with description complexity $t$, where

$$E^{\ast} = \left\{ (x_1,...,x_k) \in \mathbb{R}^{qk}: \Phi(f_1(x_1,...,x_k) \geq 0, f_2(x_1,...,x_k) \geq 0,...,f_t(x_1,...,x_k) \geq 0)=1\right\}$$

\noindent for some polynomials $f_1,...,f_t$ of degree at most $t$ and Boolean function $\Phi$ and, for $i_1 <\cdots < i_k$, we have

\begin{equation}(A^{\ast}_{i_1},...,A^{\ast}_{i_k}) \in E^{\ast}\subset \mathbb{R}^{qk}\hspace{.4cm}  \Leftrightarrow\hspace{.4cm} (A_{i_1},...,A_{i_k}) \in E.\end{equation}

To simplify the presentation, we will always treat our semi-algebraic sets $A_1,..., A_N$ as points $p_1,..., p_N$ in a higher-dimensional space. Moreover, we will only consider {\em ordered} point sets $P = \{p_1,...,p_N\}$ in $\mathbb{R}^d$ with a semi-algebraic relation $E\subset {P\choose k}$. Note that the same results hold for {\em symmetric} relations on unordered point sets, as the ordering plays no role in this case.

We say that $(P,E)$ is \emph{homogeneous} if ${P\choose k}\subset E$ or $E\cap {P\choose k} = \emptyset$.  We denote by $\hom(P,E)$ the size of the largest 
homogeneous subfamily of $P$.  Let $R^{d, t}_{k}(n)$ be the minimum integer $N$ such that every ordered $N$-element point set $P$ in $\mathbb{R}^d$ equipped with 
a $k$-ary semi-algebraic relation $E\subset {P\choose k}$ which has complexity at most $t$ satisfies $\hom(P,E) \geq n$. Our first result shows that 
$R^{d,t}_{k}(n)$ may be bounded above by an exponential tower of height $k - 1$.

\begin{theorem}
\label{main}
For $k \geq 2$ and $d,t \geq 1$,
$$ R^{d, t}_{k}(n) \leq \twr_{k-1}\left(n^{c_1}\right),$$
\noindent where $c_1$ is a constant that depends only on $d$, $k$, and $t$.
\end{theorem}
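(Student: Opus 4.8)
The plan is to prove Theorem~\ref{main} by induction on $k$, mimicking the Erd\H os--Rado bound but crucially exploiting the semi-algebraic structure to save one exponential at the base case. The base case $k=2$ is essentially the result of Alon, Pach, Pinchasi, Radoi\v ci\'c, and Sharir cited in the abstract: a point set $P$ of size $N$ in $\mathbb R^d$ with a binary semi-algebraic relation $E$ of complexity $t$ contains a homogeneous subset of size at least $N^{c}$ for some $c = c(d,t) > 0$, which is equivalent to $R^{d,t}_2(n) \le n^{c_1}$. So $R^{d,t}_2(n) = \twr_1(n^{c_1})$, as required.

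For the inductive step, suppose the bound holds for $k$ and consider an $N$-element ordered point set $P = \{p_1,\dots,p_N\} \subset \mathbb R^d$ with a $k{+}1$-ary semi-algebraic relation $E$ of complexity at most $t$. The Erd\H os--Rado strategy is to build a sequence of points greedily so that the color of a $(k{+}1)$-tuple depends only on its first $k$ elements: pick $p_{i_1}, p_{i_2}, \dots$ one at a time, at each stage passing to a subset of the remaining points on which all not-yet-determined $k$-tuples behave consistently with respect to the new point. In the classical argument this subset-selection costs a full application of Ramsey's theorem for $k$-uniform hypergraphs at each step, because one is $2^{\binom{j}{k}}$-coloring the remaining points. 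The key new idea here must be to observe that when we fix the first $j$ chosen points and ask, for a varying point $x$, which of the $\binom{j}{k}$ possible $k$-subsets $S$ of the chosen points satisfy $(S, x) \in E$, this partition of the remaining points into at most $2^{\binom{j}{k}}$ classes is itself induced by a semi-algebraic relation of bounded complexity — indeed it is a \emph{unary} semi-algebraic condition on $x$ (with the chosen points as parameters), cut out by at most $t\binom{j}{k}$ polynomials of degree at most $t$. By the Milnor--Thom / Warren bound on the number of sign patterns, such an arrangement of $m = t\binom{j}{k}$ polynomials of degree $\le t$ in $\le t$ variables realizes only $m^{O(t)} = \binom{j}{k}^{O(t)}$ cells, so one of the classes retains at least a $\binom{j}{k}^{-O(t)}$ fraction of the remaining points. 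Hence after building a sequence of length $m$ we have only lost a factor of roughly $\prod_{j=k}^{m} \binom{j}{k}^{-O(t)} = (m!)^{-O(t)} \ge m^{-O(tm)}$, so a starting set of size $N$ yields a consistent sequence of length $m$ provided $\log N \gtrsim tm\log m$, i.e. $m \gtrsim \log N / (t\log\log N)$, which is $\Omega(\log N)$ up to lower-order factors.

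Now on this consistent sequence $q_1,\dots,q_m$, the relation $E$ restricted to $(k{+}1)$-tuples depends only on the first $k$ coordinates, so it defines a $k$-ary relation $E'$ on $\{q_1,\dots,q_{m}\}$ (after discarding the last point): set $(q_{i_1},\dots,q_{i_k}) \in E'$ iff $(q_{i_1},\dots,q_{i_k},q_{i_{k+1}}) \in E$ for some — equivalently every — later index $i_{k+1}$. The point I need to verify carefully is that $E'$ is again semi-algebraic of complexity bounded in terms of $d,k,t$ only: this is where ``the algebraic properties of the underlying relations may be preserved'' (quoting the introduction). Since consistency was arranged using polynomial sign conditions, $(q_{i_1},\dots,q_{i_k}) \in E'$ can be read off directly from the defining polynomials of $E^\ast$ evaluated with any dummy value in the last slot, giving a semi-algebraic set of complexity $O_{d,k,t}(1)$ in $\mathbb R^{qk}$ — quantifier elimination (Tarski--Seidenberg with effective degree bounds) guarantees that even the ``for some $i_{k+1}$'' formulation stays semi-algebraic of bounded complexity, but the consistency step makes this immediate. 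Applying the inductive hypothesis, $\{q_1,\dots,q_{m-1}\}$ has a homogeneous subset for $E'$ of size at least $n$ once $m-1 \ge R^{d,t'}_k(n) \le \twr_{k-1}((n)^{c})$ for the appropriate constant. Such a subset is homogeneous for $E$ on $P$ by construction of the consistent sequence. Chaining the bounds: we need $\log N$ (up to a $\log\log N$ factor, absorbed into the constant) to exceed $\twr_{k-1}(n^c)$, i.e. $N \le \twr_k((n^c)^{1+o(1)}) $ is impossible — wait, more precisely, $N \ge \twr_k(n^{c_1})$ forces $m \gtrsim \twr_{k-1}(n^{c_1/2}) \ge R^{d,t'}_k(n)$, giving $\hom \ge n$. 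Adjusting $c_1$ to swallow the polylogarithmic losses at each level yields $R^{d,t}_{k+1}(n) \le \twr_k(n^{c_1})$.

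The main obstacle I anticipate is the bookkeeping at the reduction step: one must (i) confirm that the consistency-selection really is governed by a polynomial arrangement of the claimed complexity, so that the Milnor--Thom/Warren cell bound applies and gives only a \emph{polynomial-in-$\binom{j}{k}$} (hence quasi-polynomial over all $m$ steps) loss rather than the exponential $2^{\binom{j}{k}}$ loss of the classical proof — this single improvement is exactly what buys the saved exponential; and (ii) verify that the complexity parameter $t'$ of the derived relation $E'$, and the ambient dimension, stay bounded by a constant depending only on $d,k,t$ and do not blow up as the induction on $k$ proceeds. Provided both points go through, the tower-height drops by one relative to the naive Ramsey bound, matching the claimed $\twr_{k-1}(n^{c_1})$.
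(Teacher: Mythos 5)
Your proposal is correct and follows essentially the same route as the paper: the Erd\H os--Rado greedy construction with the Milnor--Thom cell bound replacing the $2^{\binom{j}{k}}$ equivalence-class count by one polynomial in $j$, the derived $k$-ary relation kept semi-algebraic of bounded complexity by specializing the last coordinate block to a fixed later point of the sequence (the paper uses the final point $q_{M+1}$, so your ``dummy value'' must indeed be such a later point, as your ``for some --- equivalently every --- later index'' remark already indicates), and induction from the Alon--Pach--Pinchasi--Radoi\v ci\'c--Sharir base case $k=2$. The only cosmetic differences are that the paper checks consistency only for tuples containing the newest chosen point (i.e.\ $(k-2)$-subsets of the earlier points) and recurses via $M = R^{d,t}_{k-1}(n-1)$, appending $q_{M+1}$ to the homogeneous set at the end, which changes nothing beyond constants.
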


\noindent We note that the $k = 2$ case of this result, which will prove crucial for our induction, is due to Alon, Pach, Pinchasi, Radoi\v ci\'c, and Sharir~\cite{noga}.

Adapting the stepping up approach of Erd\H os and Hajnal, we may also show that, for every $k$ and every sufficiently large $d$ and $t$, the function $R^{d, t}_{k}(n)$ does indeed grow as a $(k-1)$-fold exponential tower in $n$.

\begin{theorem}
\label{main2}
For every $k \geq 2$, there exist $d = d(k)$ and $t = t(k)$ such that

$$ R^{d,t}_{k}(n)\geq \twr_{k-1}(c_2n),$$

\noindent where $c_2$ is a positive constant that depends only on $k$.
\end{theorem}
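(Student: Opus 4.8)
We prove the bound by induction on $k$, constructing for every $n$ an ordered point set $P_k\subset\RR^{d(k)}$ carrying a $k$-ary semi-algebraic relation $E_k$ of complexity at most $t(k)$ with $|P_k|\ge\twr_{k-1}(c_2n)$ and $\hom(P_k,E_k)<n$. Along the way we maintain two extra invariants: $P_k\subset[0,1]^{d(k)}$, and $E_k$ is \emph{robust}, meaning there is $\rho_k>0$ such that the membership of any $k$-tuple of $P_k$ in $E_k$ is unchanged if each of its points is perturbed by at most $\rho_k$. The case $k=2$ is immediate since $\twr_1(c_2n)=c_2n$: take $2(n-1)$ points of $[0,1]^2$ forming $n-1$ pairs with (nearly) equal first coordinate, and let $E_2$ be ``first coordinates within some fixed small threshold''; this is robust, has no triangle, and no independent set of size $n$. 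For $k=3$ we use the Erd\H os--Szekeres construction \cite{erdos} of $2^{n-2}$ points in general position with integer coordinates and no $n$ in convex position; rescaled into $[0,1]^2$, with $E_3$ the orientation relation (sign of the $3\times3$ determinant whose rows are $(1,p),(1,q),(1,r)$), a standard convexity argument shows that a subset all of whose triples are equally oriented with respect to the fixed order must be in convex position, so $\hom(P_3,E_3)<n$ and $|P_3|=2^{n-2}\ge\twr_2(c_2n)$; robustness holds since that determinant stays bounded away from $0$ on the actual triples. We must treat $k=3$ separately because the Erd\H os--Hajnal stepping-up cannot go from $2$- to $3$-uniform.

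For the inductive step $k\to k+1$ with $k\ge3$, set $m=|P_k|$ and $P_k=\{p_1<\dots<p_m\}$, and index $P_{k+1}$ by $\{0,1\}^m$, identified with $[2^m]$ in the usual way. Fix a constant $B$ (to be taken enormous) and, for $\epsilon\in\{0,1\}^m$, put
$$q_\epsilon=\Bigl(\textstyle\sum_{i=1}^m\epsilon_iB^{-(m+1-i)},\ \sum_{i=1}^m\epsilon_iB^{-(m+1-i)}p_i\Bigr)\in\RR\times\RR^{d(k)},$$
so $d(k+1)=d(k)+1$, and order $P_{k+1}$ by the first coordinate $x_\epsilon$, which reproduces the usual order on $[2^m]$. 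Writing $\delta(\epsilon,\epsilon')$ for the largest coordinate on which $\epsilon\ne\epsilon'$ differ, the two observations driving the construction are: (i) because $B$ is large, for $v_1<v_2<v_3$ one has $|x_{v_1}-x_{v_2}|<|x_{v_2}-x_{v_3}|$ iff $\delta(v_1,v_2)<\delta(v_2,v_3)$, with a multiplicative gap of at least $B-2$, so the entire up--down pattern of the sequence $\delta_i:=\delta(v_i,v_{i+1})$ for a $(k+1)$-tuple $v_1<\dots<v_{k+1}$ is read off from bounded-degree polynomial inequalities in the $x$-coordinates; and (ii) for a pair $v<v'$ with $\delta=\delta(v,v')$, the coordinatewise ratio of the last $d(k)$ coordinates to the first, $(y_v-y_{v'})/(x_v-x_{v'})$, equals $p_\delta$ up to error $O(1/B)$, since the $i=\delta$ term dominates geometrically. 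Thus in the monotone case we can recover $p_{\delta_1},\dots,p_{\delta_k}$ to within $O(1/B)$.

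We then define $E_{k+1}$ by imitating the Erd\H os--Hajnal stepping-up coloring, whose value on a $(k+1)$-tuple depends only on (a) the up--down pattern of $(\delta_1,\dots,\delta_k)$ and (b) in the monotone case, the membership of the $k$-set $\{p_{\delta_1},\dots,p_{\delta_k}\}$ in $E_k$. By (i), part (a) is a bounded-complexity semi-algebraic condition on $q_{v_1},\dots,q_{v_{k+1}}$; by (ii), substituting the rational expressions for the $p_{\delta_i}$ into the defining polynomials of $E_k$ and clearing denominators (whose signs are fixed, since $x_{v_i}-x_{v_{i+1}}<0$ always) turns part (b) into one as well, of degree bounded in $d(k),k,t(k)$ only --- crucially not in $B$ or $m$. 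The delicate point, which we expect to be the crux of the whole argument, is that in (b) the $p_{\delta_i}$ were recovered only approximately: this is exactly what the robustness invariant is for. Choosing $B$ so large that $O(1/B)<\rho_k$, the approximate evaluation of $E_k$ agrees with the exact one; and by imposing every inequality in the definition of $E_{k+1}$ with a fixed slack --- which the actual points satisfy, thanks to the $B-2$ gap in (i) and the $O(1/B)$ margin in (ii) --- one checks that $P_{k+1}\subset[0,1]^{d(k)+1}$ and that $E_{k+1}$ is itself $\rho_{k+1}$-robust for some tiny but positive $\rho_{k+1}$, so the induction continues (with $B$ rechosen, much larger, at each step; this changes only the coefficients of the defining polynomials, never their number or degree).

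Finally, since $E_{k+1}$ computes on $P_{k+1}$ exactly the Erd\H os--Hajnal stepping-up color, the classical analysis (see \cite{conlon,graham}) applies unchanged and gives $\hom(P_{k+1},E_{k+1})\le 2\,\hom(P_k,E_k)$, say; so after the $k-3$ steps from the base case the homogeneity bound is blown up only by a factor $2^{O(k)}$, which we absorb into $c_2=c_2(k)$, while $|P_{k+1}|=2^{|P_k|}$ makes $|P_k|$ grow as $\twr_{k-1}$ of a linear function of $n$. This yields the theorem with $d(k)=k-1$ and $t(k)$ a constant determined by the recursion $t(k+1)=t'\bigl(d(k),k,t(k)\bigr)$.
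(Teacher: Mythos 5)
Your proposal is correct, and at the level of strategy it is the same as the paper's: adapt the Erd\H os--Hajnal stepping-up so that the step-up relation stays semi-algebraic of bounded complexity, which is possible because each pair at the new level lets you recover the corresponding lower-level point \emph{approximately}, and a robustness invariant (the paper's ``$\epsilon$-deep'') guarantees that approximate recovery evaluates $E_k$ correctly; the combinatorial bound $\hom(P_{k+1},E_{k+1})\le 2\hom(P_k,E_k)+O(k)$ is then the classical stepping-up analysis, exactly as in the paper's Lemma \ref{stepup} and Corollary \ref{smallhom}. Where you genuinely differ is in the encoding and the base case. The paper steps up from $\mathbb{R}^d$ to $\mathbb{R}^{2d}$, pairing coordinates and recovering $p_\delta$ coordinatewise via slopes, which forces the auxiliary ``$\epsilon$-increasing'' invariant (the $\delta$-pattern is read off from coordinatewise comparisons of slope vectors) and gives $d(k)=2^{k-3}$; you instead keep a single base-$B$ ``address'' coordinate from which the up--down pattern of the $\delta_i$ is read directly, plus $d$ numerator coordinates recovering $p_\delta$ through a common-denominator ratio, so the dimension grows by one per step, $d(k)=k-1$, and the increasing-ness bookkeeping disappears. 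This is in effect a generalization of the paper's one-dimensional $k=4$ construction in Subsection \ref{k4} (where $\delta$-comparisons are likewise done via differences of a single coordinate), and if written out carefully it would be of independent interest in view of the paper's concluding remark asking for low-dimensional constructions. Your base case at $k=3$ (the Erd\H os--Szekeres $2^{n-2}$-point planar set with the orientation relation) also differs from the paper's one-dimensional arithmetic construction in Subsection \ref{k3}, but both serve the same purpose.

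One caveat on wording rather than substance: the claim that you impose the defining inequalities of $E_{k+1}$ ``with a fixed slack --- which the actual points satisfy'' is not literally right. After clearing denominators, the margins in those inequalities at the actual points scale like $B^{-m}$, so no slack independent of $m$ is available. But none is needed: robustness of $(P_{k+1},E_{k+1})$ only asserts the existence of \emph{some} $\rho_{k+1}>0$ for the finite configuration, and this follows from strictness of the inequalities and continuity (the ratios are Lipschitz with a constant of order $B^{m}$, so $\rho_{k+1}$ is exponentially small in $m$ --- which is harmless, since the next base $B$ is chosen only after $\rho_{k+1}$ is fixed). With that correction your induction closes, and the bookkeeping $|P_k|=\twr_{k-1}(n-2)$, $\hom(P_k,E_k)=O_k(n)$ gives the stated bound.
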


\noindent Notice that in our proof we find it necessary to make $d$ large in terms of $k$. This is in some sense necessary since a striking result of Bukh and Matou\v sek \cite{bukh} implies that the one-dimensional semi-algebraic Ramsey function $R_{k}^{1,t} (n)$ is at most double exponential, that is, $R_{k}^{1,t} (n) \leq 2^{2^{cn}}$, where $c$ depends only on $k$ and $t$. Nevertheless, for $k = 4$, we will show that there is a one-dimensional construction giving the correct double-exponential lower bound. 

 \medskip

\noindent
{\bf Applications.} Over the past few decades, Ramsey numbers have been applied extensively to give upper bounds on homogeneity problems arising in geometry. For many of these applications, the relations can be defined algebraically and one can obtain an exponential improvement on the bound by using Theorem \ref{main}. Here we will present two such applications.

\medskip

\noindent \textit{Order types.}  Consider an ordered set $P= \{p_1,p_2,...,p_N\}$ of $N$ points in $\mathbb{R}^d$ in general position, that is, no $d+1$ members lie on a common hyperplane.  For a $(d + 1)$-tuple $ (p_{i_1},...,p_{i_{d + 1}})$, where $i_1 < \cdots < i_{d + 1}$, let $M = M(p_{i_1},...,p_{i_{d + 1}})$ be the $(d+ 1)\times (d+1)$ matrix with vectors $(1,p_{i_j})$, i.e., $1$ followed by the vector of the $d$ coordinates of $p_{i_j}$, as the columns for $1 \leq j \leq d + 1$ and let $\det(M)$ denote the determinant of the matrix $M$.  We say that $(p_{i_1},...,p_{i_{d + 1}})$ has a positive orientation if $\det(M) > 0$, and we say that $(p_{i_1},...,p_{i_{d + 1}})$ has a negative orientation if $\det(M) < 0$.  Notice that since $P$ is in general position $\det(M) \neq 0$.

Eli\'a\v{s} and Matou\v{s}ek \cite{matousek} defined $OT_d(n)$ to be the smallest integer $N$ such that any set of $N$ points in general position in 
$\mathbb{R}^d$ contains $n$ members such that every $(d + 1)$-tuple has the same orientation.  It was pointed out in \cite{matousek} that $OT_1(n) = (n - 1)^2 + 
1$, $OT_2(n) = 2^{\Theta(n)}$, and, for $d\geq 3$, the bound $OT_d(n) \leq \twr_{d+1}(cn)$ follows from Ramsey's theorem. They also gave a construction showing 
that $OT_3(n) \geq 2^{2^{\Omega(n)}}$.  In Section \ref{applications}, we prove the following result which improves the upper bound by one exponential.  In 
particular, for $d = 3$, it shows that the growth rate of $OT_3(n)$ is double exponential in $n^c$.

\begin{theorem}
\label{ot}
For $d\geq 3$, $OT_d(n) \leq \twr_{d}(n^c)$, where $c$ depends only on $d$.
\end{theorem}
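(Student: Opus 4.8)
The plan is to realize $OT_d(n)$ as an instance of the semi-algebraic Ramsey number $R^{d,t}_{k}(n)$ with $k = d+1$ and then quote Theorem \ref{main}.

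First, I would encode the orientation as a semi-algebraic relation. Given an ordered set $P = \{p_1,\dots,p_N\}$ of points in $\mathbb{R}^d$ in general position, each $p_i$ is already a point in $\mathbb{R}^d$, so in the notation of the introduction we may take $q = d$. Define $E \subset {P \choose d+1}$ by putting $(p_{i_1},\dots,p_{i_{d+1}}) \in E$, for $i_1 < \cdots < i_{d+1}$, if and only if $\det M(p_{i_1},\dots,p_{i_{d+1}}) > 0$. Since the entries of $M$ are the coordinates of the points together with constant $1$'s, the function $\det M$ is a single polynomial of degree $d$ in the $d(d+1)$ real variables $(x_1,\dots,x_{d+1}) \in \mathbb{R}^{d(d+1)}$, and the strict inequality $\det M > 0$ is expressible as the negation of $-\det M \geq 0$ inside a Boolean function. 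Hence $E$ is a $(d+1)$-ary semi-algebraic relation on $P$ whose complexity is bounded by some $t = t(d)$ depending only on $d$.

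Second, I would check that homogeneous subsets are exactly the subsets with constant orientation. If $S \subseteq P$ is homogeneous for $E$, then either all $(d+1)$-tuples of $S$ satisfy $\det M > 0$ or none do; in the latter case, using general position to exclude $\det M = 0$, all $(d+1)$-tuples satisfy $\det M < 0$. Either way, every $(d+1)$-tuple of $S$ has the same orientation, and conversely such a set is homogeneous. Therefore $\hom(P,E) \geq n$ produces the $n$ points demanded by the definition of $OT_d(n)$, and we conclude $OT_d(n) \leq R^{d,t}_{d+1}(n)$. (Note that the Ramsey bound $R^{d,t}_{d+1}$ applies to all point sets; general position is used only to translate homogeneity back into a statement about orientations.)

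Finally, I would apply Theorem \ref{main} with $k = d+1 \geq 4$, which gives
$$R^{d,t}_{d+1}(n) \leq \twr_{d+1-1}\bigl(n^{c_1}\bigr) = \twr_{d}\bigl(n^{c_1}\bigr),$$
where $c_1$ depends only on $d$, on $k = d+1$, and on $t = t(d)$, hence only on $d$. Setting $c := c_1$ yields the theorem. I do not expect a genuine obstacle here: the only thing requiring care is the bookkeeping in the first step — verifying that the determinant relation literally meets the formal definition of a semi-algebraic relation of complexity at most $t$ with $t = t(d)$, including fitting the strict inequality into the Boolean-function formalism — after which the result is a direct substitution into Theorem \ref{main}.
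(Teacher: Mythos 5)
Your proposal is correct and is essentially the paper's proof: encode positive orientation of a $(d+1)$-tuple as the semi-algebraic relation $\det M > 0$ of complexity $t(d)$, note that homogeneity plus general position means constant orientation, and apply Theorem \ref{main} with $k = d+1$ to get $OT_d(n) \leq \twr_d(n^{c})$. No difference in approach worth noting.
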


\medskip

\noindent \textit{One-sided hyperplanes.}  Let $H=\{h_1,...,h_N\}$ be an ordered set of $N$ hyperplanes in $\mathbb{R}^d$ in general position, that is, every $d$ members in $H$ intersect at a distinct point.  We say that $H$ is \emph{one-sided} if the vertex set of the arrangement of $H$, that is, the set of intersection points, lies completely on one side of the hyperplane $x_d = 0$.

Let $OSH_d(n)$ denote the smallest integer $N$ such that every set of $N$ hyperplanes in $\mathbb{R}^d$ in general position contains $n$ members that are one-sided.  Dujmovi\'c and Langerman \cite{duj} used the existence of $OSH_d(n)$ to prove a ham-sandwich cut theorem for hyperplanes.  Matou\v{s}ek and Welzl \cite{welzl} observed that $OSH_2(n) = (n - 1)^2 + 1$ and Eli\'a\v{s} and Matou\v{s}ek \cite{matousek} noticed that, for $d\geq 3$, $OSH_d(n) \leq \twr_{d}(cn)$ follows from Ramsey's theorem, where $c$ depends on $d$.  In Section \ref{applications}, we prove the following result which again improves the upper bound by one exponential.

\begin{theorem}
\label{osh}
For $d\geq 3$, $OSH_{d}(n) \leq \twr_{d-1}(cn^2\log n)$, where $c$ depends only on $d$.

\end{theorem}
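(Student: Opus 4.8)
The plan is to recast one-sidedness as a homogeneity problem for a $d$-ary semi-algebraic relation, so that Theorem~\ref{main} applies, and then to sharpen the argument using the fact that the relation is ``essentially one-dimensional'' in its last coordinate. Encode each hyperplane $h_i=\{x\in\mathbb R^d:\langle\alpha_i,x\rangle=\beta_i\}$ by the point $p_i=(\alpha_i,\beta_i)$ (in a fixed affine chart, so that $p_i$ has bounded description complexity). By Cramer's rule the last coordinate of the vertex $\bigcap_{\ell=1}^{d}h_{i_\ell}$ equals $\det(\widetilde M)/\det(M)$, where $M$ has rows $\alpha_{i_1},\dots,\alpha_{i_d}$ and $\widetilde M$ is obtained from $M$ by replacing its last column with $(\beta_{i_1},\dots,\beta_{i_d})$; both determinants are polynomials of degree at most $d$ in the coordinates of $p_{i_1},\dots,p_{i_d}$, and $\det(M)\neq 0$ by general position. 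Hence ``the vertex of $h_{i_1},\dots,h_{i_d}$ lies strictly above $\{x_d=0\}$'' is a $d$-ary semi-algebraic relation $E$ of complexity bounded in terms of $d$, and a subfamily is one-sided exactly when it is homogeneous for $E$. In particular $OSH_d(n)\le R^{d,t}_d(n)$, so Theorem~\ref{main} already gives $OSH_d(n)\le\twr_{d-1}(n^{c_1})$; the remaining task is to replace $n^{c_1}$ by $O(n^2\log n)$.

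\textbf{The structural input.} Freeze the first $d-1$ arguments $h_{i_1},\dots,h_{i_{d-1}}$; by general position their intersection is a line $\lambda\subset\mathbb R^d$, which generically meets $\{x_d=0\}$ in a single point and so splits into a positive and a negative ray. The vertex of $h_{i_1},\dots,h_{i_d}$ is $h_{i_d}\cap\lambda$, and it lies above $\{x_d=0\}$ precisely when this point lies on the positive ray; after parametrising $\lambda$ linearly, its position along $\lambda$ is a bounded-complexity function of $p_{i_d}$ alone. Thus, once the first $d-1$ arguments are fixed, $E$ is governed by the sign of a single real parameter -- a monotone (order-type) condition on the last argument rather than a generic polynomial one. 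This is the feature responsible for the $d=2$ case being exactly the Erd\H os--Szekeres bound $(n-1)^2+1$, and the goal is to propagate it through the induction.

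\textbf{Running the induction, and the main obstacle.} I would re-run the Erd\H os--Rado-type reduction that proves Theorem~\ref{main} -- passing from the $d$-ary relation to a derived $(d-1)$-ary relation on a large subfamily, then down through smaller uniformities to the $2$-ary base case -- while verifying that each derived relation inherits a one-dimensional structure in its last argument (bounded complexity of the derived relations is automatic from quantifier elimination, exactly as in Theorem~\ref{main}). If this can be maintained, the base case becomes the Erd\H os--Szekeres bound for monotone sequences rather than the general Alon--Pach--Pinchasi--Radoi\v ci\'c--Sharir bound $R^{d,t}_2(m)\le m^{c}$, and the parameter fed up through the tower stays quadratic (with an extra logarithmic factor from the semi-algebraic stratification), which yields $OSH_d(n)\le\twr_{d-1}(cn^2\log n)$. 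The crux -- the part I expect to be delicate -- is exactly this propagation: the derived relations are no longer the original geometric relation, so one must check not merely that their complexity stays bounded but that the ``single real parameter in the last coordinate'' survives each reduction step, so that there is no polynomial blow-up of the top parameter across levels. By comparison, the general-position reductions and the precise accounting of the logarithmic factor are routine.
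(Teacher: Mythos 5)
Your setup is the same as the paper's (encode $h_i$ by its coefficient vector, let $E$ be the $d$-ary relation ``the vertex of the $d$-tuple has positive last coordinate''), and your diagnosis of where the saving must come from is also right. But the proposal stops exactly at the step that carries the proof: you say one must check that the ``single real parameter in the last coordinate'' survives each Erd\H os--Rado reduction, call it delicate, and do not do it. As stated, the abstract propagation is not even plausible in the form you describe: in the reduction of Theorem~\ref{semirec} the derived relation $E_2$ is obtained by \emph{substituting the fixed last point} $q_{M+1}$ into the last slot, i.e.\ $E_2^{\ast}(x)=\Phi\bigl(f_1(x,q_{M+1})\geq 0,\ldots\bigr)$, so it is a relation on the first $d-1$ arguments; the one-dimensional structure you isolated lives in the argument that has just been eliminated, and there is no reason it reappears, in that abstract form, in the last argument of $E_2$. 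So the gap is a missing idea, not a missing routine verification.

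The paper's resolution is geometric rather than structural, and it is the ingredient you would need. After running the Erd\H os--Rado construction once to get $q_1,\ldots,q_{M+1}$ with the usual homogeneity property, one observes that for $(d-1)$-tuples from $\{q_1,\ldots,q_M\}$ completed by $q_{M+1}$, the relevant vertex is $q_{i_1}\cap\cdots\cap q_{i_{d-1}}\cap q_{M+1}$, i.e.\ a vertex of the arrangement of the $(d-2)$-dimensional flats $q_i^{\ast}=q_i\cap q_{M+1}$ inside $q_{M+1}\cong\mathbb{R}^{d-1}$, and ``positive last coordinate'' becomes ``on one side of $h_0\cap q_{M+1}$'' within $q_{M+1}$. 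Thus the derived problem is literally $OSH_{d-1}$, not some new semi-algebraic relation whose structure must be tracked; taking $M=OSH_{d-1}(n-1)$ one finds $n-1$ flats $q^{\ast}_{i_1},\ldots,q^{\ast}_{i_{n-1}}$ that are one-sided in $q_{M+1}$, and the homogeneity from the Erd\H os--Rado step guarantees that the vertices of $d$-tuples taken entirely from the corresponding hyperplanes also lie on the same side (they agree with the corresponding tuples completed by $q_{M+1}$), so these $n-1$ hyperplanes together with $q_{M+1}$ are one-sided. This gives the recursion $OSH_d(n)\leq 2^{C M\log M}$ with $M=OSH_{d-1}(n-1)$, and with the base case $OSH_2(n)=(n-1)^2+1$ the bound $\twr_{d-1}(cn^2\log n)$ follows. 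Your proposal would become a proof precisely if you replaced the hoped-for abstract propagation by this identification of the derived relation as the one-sided hyperplane problem one dimension down.
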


\medskip

\noindent
{\bf Off-diagonal Ramsey numbers for semi-algebraic relations.} 
The Ramsey number $R_k(s,n)$ is the minimum integer $N$ such that every red-blue coloring of the unordered $k$-tuples on an $N$-element set contains a red set of size $s$ or a blue set of size $n$, where a set is called red (blue) if all $k$-tuples from this set are red (blue).  The off-diagonal Ramsey numbers, i.e., $R_k(s,n)$ with $s$ fixed and $n$ tending to infinity, have been intensively studied. For example, it is known \cite{AKS,B,BK, Kim} that $R_2(3,n) =\Theta(n^2/\log n)$ and, for fixed $s > 3$,
\begin{equation}\label{offgraph}
c_1(\log n)^{1/(s-2)} \left(\frac{n}{\log n}\right)^{(s+1)/2} \leq R_2(s,n) \leq c_2\frac{n^{s-1}}{\log^{s-2}n}.\end{equation}

A classical argument of Erd\H os and Rado \cite{rado} (see also \cite{conlon} for an improvement) demonstrates that

\begin{equation}\label{rado}R_k(s,n) \leq 2^{{R_{k-1}(s-1,n-1)\choose k-1}} + k - 2.\end{equation}

\noindent Together with the upper bound in (\ref{offgraph}) this implies, for fixed $s$, that

\begin{equation}\label{rado2} R_k(s,n) \leq \twr_{k-1}(c_s n^{2s-2k+2}/\log^{2s-2k} n).
\end{equation}

\noindent The bound in \cite{conlon} roughly improves the exponent of $n$ from $2s-2k+2$ to $s-k+1$. 
Note that, for fixed 
$s$, we get an exponential improvement on the upper bound for $R_k(s,n)$ that follows from using the trivial bound $R_k(s,n) \leq R_k(n)$.

Off-diagonal Ramsey numbers may also be used to give another simple solution to the problem of estimating $ES(n)$. Let $P$ be a set of $N = R_4(5,n)$ points in 
the plane in general position.  We color the 4-element subsets blue if they are in convex position and color them red otherwise.  As noticed by Esther Klein, any five 
points in general position must contain four points in convex position.  Hence, there must be a subset $S\subset P$ of size $n$ such that every 4 points in $S$ is 
in convex position and therefore $S$ must be in convex position.  This shows that $ES(n) \leq R_4(5,n)$.  Just as before, one might expect that this 
double-exponential bound is not tight for such an algebraically defined coloring.

Let $R_{k}^{d,t}(s,n)$ denote the minimum integer $N$ with the property that for any sequence of $N$ points $P$ in $\mathbb{R}^d$ and any $k$-ary semi-algebraic relation $E\subset {P\choose k}$ of complexity at most $t$, $P$ has $s$ members such that every $k$-tuple induced by them is in $E$ or $P$ has $n$ members such that no $k$-tuple induced by them belongs to $E$.  Clearly, for $s \leq n$, we have

\begin{equation}\label{trivial}R_{k}^{d,t}(s,n) \leq R_{k}^{d,t}(n)\leq \twr_{k-1}(n^c),\end{equation}
which matches the tower height in (\ref{rado2}). However, it seems likely that the following stronger bound holds.

\begin{conjecture}

For fixed $k \geq 3,d,t$, and $s$,  $R_{k}^{d,t}(s,n) \leq \twr_{k-2}(n^c)$, where $c = c(k,d,t,s)$.

\end{conjecture}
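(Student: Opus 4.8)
\medskip
\noindent\textbf{A proof proposal.}

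The trivial bound (\ref{trivial}) gains nothing over the diagonal case precisely because it estimates $R_k^{d,t}(s,n)$ by $R_k^{d,t}(n)$, discarding the asymmetry between $s$ and $n$. To recover the missing exponential the plan is to run the off\-/diagonal Erd\H os--Rado reduction (\ref{rado}) inside the semi\-/algebraic setting. First I would prove the two\-/parameter analogue of the reduction underlying Theorem~\ref{main}:
\begin{equation}\label{pp:red}
R_{k+1}^{d,t}(s,n)\ \le\ 2^{\left(R_{k}^{d',t'}(s-1,\,n-1)\right)^{c}},
\end{equation}
with $d',t',c$ depending only on $d,k,t$ and — exactly as in Theorem~\ref{main} — with the $k$\-/ary relation produced on a large subset by the Erd\H os--Rado peeling still semi\-/algebraic of complexity $O_{d,k,t}(1)$. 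The proof of Theorem~\ref{main} is an Erd\H os--Rado peeling that runs equally well with two distinct parameters $s\ne n$; one only has to check that ``an $(s-1)$\-/clique of the peeled $k$\-/ary relation'' and ``an $n$\-/independent set of it'' pull back to an $s$\-/clique and an $n$\-/independent set of the original $(k+1)$\-/ary relation, which is the standard bookkeeping behind (\ref{rado}). Iterating (\ref{pp:red}) a total of $k-3$ times reduces the conjecture to the single case $k=3$, i.e.\ to the statement $R_3^{d,t}(s,n)\le n^{c(d,t,s)}$ for every fixed $s$ (tower height \emph{one}); substituting this into (\ref{pp:red}) yields $R_k^{d,t}(s,n)\le\twr_{k-2}(n^c)$. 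Substituting instead the trivial estimate $R_3^{d,t}(s,n)\le 2^{n^c}$ only regives (\ref{trivial}), so the whole content is a polynomial bound for the triple case.

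For $k=3$ I would induct on $s$, the cases $s\le 3$ being trivial (if some triple lies in $E$ we already have an $s$\-/set, and otherwise all $N$ points are independent). Suppose $(P,E)$ is $3$\-/ary semi\-/algebraic of complexity $t$ on $N$ points and has no $s$ points whose triples all lie in $E$; fix $p\in P$. Absorbing $p$ into the defining polynomials, the link $E_p(x,y):=[(p,x,y)\in E]$ is a $2$\-/ary semi\-/algebraic relation of complexity $O_t(1)$ on $P\setminus\{p\}$. Apply the Alon--Pach--Pinchasi--Radoi\v{c}i\'{c}--Sharir bound $R_2^{d,t}(m)\le m^{c_0}$ to $E_p$ with $m:=\max\bigl(R_3^{d,t}(s-1,n),\,R_3^{d,t}(s,n-1)\bigr)$. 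If $E_p$ has a clique $C$ with $|C|=m$, then every pair in $C$ is joined by $E_p$, and $(C,E)$ has either an $(s-1)$\-/set with all triples in $E$ (adjoining $p$ gives a forbidden $s$\-/set) or an $n$\-/set with no triple in $E$; if instead $E_p$ has an independent set $I$ with $|I|=m$, then no triple $(p,x,y)$ with $x,y\in I$ lies in $E$, and $(I,E)$ has either an $s$\-/set with all triples in $E$ (again forbidden) or an $(n-1)$\-/set with no triple in $E$, to which we adjoin $p$. Hence $R_3^{d,t}(s,n)\le m^{c_0}+1$. The clique branch decreases $s$ and is taken at most $s-3$ times, so it costs only a polynomial factor; all the difficulty is in the independent branch.

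The main obstacle is exactly this independent branch: it lets us decrease $n$ by one, but it pays a fixed power $c_0>1$ (the exponent in the $k=2$ bound), and iterating a power $>1$ about $n$ times blows up to the useless double\-/exponential $R_3^{d,t}(s,n)\le\twr_2(\operatorname{poly}(n))$ — i.e.\ back to (\ref{trivial}). To avoid this one needs a sharper structural input. One option is to replace the $k=2$ black box by the much stronger assertion that a semi\-/algebraic graph of bounded complexity with no clique of size $r$ has an independent set of size $\Omega_{r,d,t}(N)$, turning the independent branch into a loss of only a constant factor per step. A more robust option is to abandon links and instead use a polynomial partitioning / cutting argument — or a semi\-/algebraic hypergraph regularity lemma with $\operatorname{poly}(1/\varepsilon)$ parts — to extract a subset of size $N^{\Omega(1)}$ on which $E$ is so rigid (say, membership $(x,y,z)\in E$ depends, in some fixed semi\-/algebraic ordering of the subset, only on a bounded list of ``threshold'' conditions) that a direct argument avoiding stepping up gives the polynomial bound. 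I expect this triple case to carry essentially the entire difficulty of the conjecture; everything else is the routine propagation of the algebraic hypotheses through the Erd\H os--Rado peeling, already carried out for $s=n$ in the proof of Theorem~\ref{main}.
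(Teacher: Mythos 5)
The statement you were asked to prove is stated in the paper as an open \emph{conjecture}, not a theorem, and your proposal does not close it. The part of your plan that does work is precisely the part the paper already records: the Erd\H os--Rado peeling of Theorem \ref{semirec} goes through verbatim with two parameters, and the paper's concluding remarks note exactly your reduction, $R^{d,t}_{k}(s,n) \leq 2^{C_1 M\log M}$ with $M = R^{d,t}_{k-1}(s-1,n-1)$, together with the observation that the whole conjecture therefore hinges on a polynomial bound for $R^{d,t}_{3}(s,n)$. But that crucial case is the conjecture, and your proposal leaves it unproved: the link-plus-induction recursion you set up fails for the reason you yourself identify, and the two rescue ideas you list are not established. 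The first (every $K_r$-free semi-algebraic graph of bounded complexity has an independent set of size $\Omega_{r,d,t}(N)$) is itself an unproved strengthening of Lemma \ref{alon}; and even granting it, the recursion ``lose a constant factor, decrease $n$ by one'' only yields $R^{d,t}_3(s,n) \leq 2^{O(n)}$, which is exponential rather than the polynomial $n^{c}$ demanded by the conjecture at $k=3$ (it would suffice only for $k\geq 4$ after one peeling step). The second option (polynomial partitioning or a semi-algebraic regularity lemma forcing ``rigid'' structure) is a direction, not an argument.

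For calibration, the paper itself only obtains much weaker partial results in this direction, and by different and more delicate means: Theorem \ref{off} gives a quasi-polynomial bound $R^{1,t}_3(s,n) \leq 2^{\log^c n}$ in dimension one, proved by a triple induction on $N$, $s$, and the number of real roots ``within the domain,'' using Sturm sequences, the transitive Erd\H os--Szekeres Lemma \ref{fox}, Lemma \ref{alon}, and a random sampling step; and Theorem \ref{david} handles only the $K^{(3)}_4\setminus e$-free case in general dimension, via the density statement of Lemma \ref{gromov}. So your identification of the bottleneck is accurate and consistent with the paper, but the conjecture's content -- a polynomial bound for the triple case, even in dimension one -- is missing from your proposal, as it is from the paper.
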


  \noindent The crucial case is when $k = 3$, since a polynomial bound on $R_{3}^{d,t}(s,n)$ could be used with the adaptation of the  Erd\H os-Rado upper bound argument discussed in this paper to obtain an exponential improvement over the trivial bound in (\ref{trivial}) for all $k$.  In Section \ref{offdiag}, we prove a somewhat weaker result, giving a quasi-polynomial bound for point sets in one dimension.

\begin{theorem}
\label{off}
For $s \geq 4$, $R_{3}^{1,t}(s,n) \leq 2^{\log^c n}$, where $c$ depends only on $s$ and $t$.

\end{theorem}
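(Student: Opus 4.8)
It is convenient to prove the contrapositive: if $E$ is a $3$-ary semi-algebraic relation of complexity at most $t$ on a finite set $P\subset\mathbb{R}$ that contains no $s$ members all of whose triples lie in $E$, then $P$ contains a subset of size at least $2^{(\log|P|)^{\delta}}$ no triple of which lies in $E$, where $\delta=\delta(s,t)>0$; the theorem follows with $c=1/\delta$. We argue by induction on $s$, the case $s=3$ being trivial since then the hypothesis already says that no triple of $P$ lies in $E$.

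The inductive step rests on two facts. The first is the $k=2$ case of Theorem~\ref{main}, due to Alon, Pach, Pinchasi, Radoi\v ci\'c and Sharir~\cite{noga}: every semi-algebraic graph of complexity at most $t$ on $m$ vertices has a homogeneous subset of size $m^{\gamma}$, where $\gamma=\gamma(t)>0$. The second is the one-dimensional structure of $E$: for fixed reals $x<y$, the fibre $\{z:(x,y,z)\in E^{\ast}\}$ is a union of at most $K=K(t)$ intervals, and the same holds for the fibres obtained by fixing either of the other two coordinates. Given $p\in P$, let $E_p$ be the $2$-ary ``link'' relation on the points of $P$ lying on the appropriate side of $p$, obtained by fixing $p$ in the corresponding coordinate; it is semi-algebraic of complexity $O(t)$. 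The point of the links is the elementary observation that if $C$ is an $E_p$-clique that is itself all-$E$, then $\{p\}\cup C$ is all-$E$; consequently, if $Q$ has no $(s-1)$-subset all-$E$ then neither does any $E_p$-clique in $Q$.

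Running this from left to right, one maintains a reservoir $Q$ (initially $P$), repeatedly removes its leftmost point $p$, applies the $k=2$ theorem to $E_p$ on $Q\setminus\{p\}$, and replaces $Q$ by the homogeneous subset so obtained, discarding in addition the boundedly many intervals needed to preserve the invariant that every already-chosen pair, together with every reservoir point, forms a triple outside $E$. In the ``clique'' case the current reservoir has no $(s-1)$-subset all-$E$, and the induction hypothesis for $s-1$ finishes the proof; in the ``$E_p$-free'' case $p$ is appended to a growing $E$-free set whose defining property is protected precisely by the one-dimensional interval structure. Balancing the shrinkage of the reservoir against the number of steps and feeding the result into the induction on $s$ produces a recursion of the form $R_{3}^{1,t}(s,n)\le\big(R_{3}^{1,t'}(s-1,n)\big)^{g(t)\log n}$ with $t'=t'(t)$ and $g=g(t)$, whose solution is $2^{\log^{c}n}$ with $c=c(s,t)$.

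The heart of the matter, and the main obstacle, is that a single application of the $k=2$ Ramsey theorem costs a fixed power of the reservoir size, so processing one point at a time would exhaust all but a doubly-logarithmic fraction of $P$ before an $E$-free set of size $n$ is built. The way around this is to treat the ``$E_p$-free'' steps in blocks: replace the link of a single point by a bipartite link between a block of left points and the reservoir, and use a bipartite (semi-algebraic) Ramsey statement to clean all of the cross-relations at once, carefully distinguishing the three order-type roles a point can play inside a triple. A final, routine, point is to check that all the auxiliary relations introduced along the way — the links, the bipartite links, and their restrictions to cells of an arrangement of intervals — remain semi-algebraic of complexity bounded in terms of $t$, so that the $k=2$ theorem and the induction hypothesis continue to apply.
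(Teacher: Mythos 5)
Your overall frame (induction on $s$, links of points, the $k=2$ semi-algebraic Ramsey theorem of Alon et al., and the observation that one-dimensional fibres of $E$ are unions of boundedly many intervals) is sound as far as it goes, and you correctly diagnose the fatal cost of the naive left-to-right scheme: each application of the $k=2$ theorem loses a fixed power of the reservoir, so one cannot afford anywhere near $n$ single-point steps. But the step you offer to repair this --- replacing the link of one point by a ``bipartite link between a block of left points and the reservoir'' and invoking ``a bipartite (semi-algebraic) Ramsey statement to clean all of the cross-relations at once'' --- is not an available tool; it is essentially a restatement of the problem. To make all triples with two vertices in a block and one in the reservoir (and the other split) homogeneous while losing only a bounded power, one needs a genuinely new mechanism, and this is exactly where the paper's proof does its real work. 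There, the argument is a triple induction on $N$, on $s$, and on an extra parameter $r$, the maximum number of real roots that the univariate polynomials $f_l(x_1,p_i,p_j)$, $f_l(p_i,x_2,p_j)$, $f_l(p_i,p_j,x_3)$ can have inside the domain $(p_1,p_N)$. The set is split into blocks, and inside each block one applies the $k=2$ theorem to the auxiliary pair-relation ``the polynomials of this pair have fewer than $r$ roots in the block's interval''; Sturm's theorem is needed to show this auxiliary relation is itself semi-algebraic of bounded complexity (after Spencer's lemma has removed the bad triples where a root coincides with a point). If some block is homogeneous in the ``fewer roots'' direction, the induction on $r$ applies; otherwise all roots of every surviving pair are trapped inside its own block, which forces triples crossing two blocks to be determined by comparison with the extreme points $p_1,p_N$, and a probabilistic selection of blocks then makes triples spanning three blocks homogeneous. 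The final bound comes from a product-type recursion (an independent set of blocks times an independent set inside each block), not from the recursion $R_{3}^{1,t}(s,n)\le\bigl(R_{3}^{1,t'}(s-1,n)\bigr)^{g(t)\log n}$ that you assert; you never derive that recursion, and in particular the source of the $\log n$ in its exponent is unexplained.

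So the proposal has a genuine gap: the block-cleaning step is a black box, and filling it requires the root-localization idea (the parameter $r$, Sturm sequences, and the trapped-roots dichotomy) that drives the paper's proof and that your sketch does not contain. A secondary, smaller point: the extra ``discarding of boundedly many intervals'' you describe to protect already-chosen pairs is unnecessary in your own scheme (once the reservoir is made $E_p$-free, every later triple through $p$ is automatically outside $E$), which suggests the role the interval structure actually has to play --- namely in the block step --- had not yet crystallized.
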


\noindent Combining Theorem \ref{off} with our adaptation of the Erd\H os-Rado upper bound argument, we obtain the following result.

\begin{corollary}

For $n \geq 4$, $R_{4}^{1,t}(s,n) \leq 2^{2^{\log^c n}}$, where $c$ depends only on $s$ and $t$.

\end{corollary}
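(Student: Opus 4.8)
The plan is to apply, exactly once, the semi-algebraic version of the Erd\H os--Rado tower-reduction that underlies Theorem~\ref{main}: namely, the step passing from a $(k+1)$-ary semi-algebraic relation of bounded complexity to a $k$-ary one while keeping the ambient dimension and the complexity under control. We would invoke it with $k+1=4$ and $d=1$, reducing $R^{1,t}_4(s,n)$ to $R^{1,\tilde t}_3(s-1,n-1)$ at the cost of a single exponential, and then feed in the quasi-polynomial estimate of Theorem~\ref{off}. Since $R^{1,t}_4(s,n)$ is non-decreasing in $s$ (an ``all-in'' set of size $s'\ge s$ contains one of size $s$), we may and do assume $s\ge 4$, so that Theorem~\ref{off} applies; the general case then follows by monotonicity at the expense of the constant $c$.

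Concretely, let $P=\{p_1,\dots,p_N\}\subset\mathbb R$ carry a $4$-ary semi-algebraic relation $E$ of complexity at most $t$. Greedily build a sequence $a_1,a_2,\dots,a_m$ and a nested family $W_3\supseteq W_4\supseteq\cdots$ of ``live'' sets: start from arbitrary $a_1,a_2,a_3$ of least index and $W_3=P\setminus\{a_1,a_2,a_3\}$; given $a_1,\dots,a_i$ and $W_i$, take $a_{i+1}\in W_i$ of least index and, for each pair $T'=\{a_r,a_{r'}\}\subseteq\{a_1,\dots,a_i\}$, consider $S_{T'}=\{x\in\mathbb R:(a_r,a_{r'},a_{i+1},x)\in E\}$, a semi-algebraic subset of $\mathbb R$ of complexity bounded in terms of $t$, hence a union of boundedly many intervals. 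The $\binom i2$ sets $S_{T'}$ partition $\mathbb R$ into $O_t(i^2)$ cells; let $W_{i+1}$ be the intersection of $W_i\setminus\{a_{i+1}\}$ with the largest of them, so $|W_{i+1}|\ge(|W_i|-1)/O_t(i^2)$. The construction reaches length $m$ once $N\ge 2^{O_t(m\log m)}$, and by design the membership of $(a_{i_1},a_{i_2},a_{i_3},a_{i_4})$ in $E$ (for $i_1<i_2<i_3<i_4$) is determined by $(a_{i_1},a_{i_2},a_{i_3})$ alone, since $a_{i_4}$ and every later element lie in a single cell of the arrangement formed at step $i_3-1$. The delicate point---carried out in the proof of Theorem~\ref{main}, and invoked here---is that (via Tarski--Seidenberg quantifier elimination) this decision rule can be realized as a genuine $3$-ary semi-algebraic relation $E'$ on $\{a_1,\dots,a_{m-1}\}\subset\mathbb R$ that is still one-dimensional and of complexity at most some $\tilde t=\tilde t(t)$; an $E'$-homogeneous subset of size $s-1$ or $n-1$ then lifts to an $E$-homogeneous subset of size $s$ or $n$. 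Taking $m=R^{1,\tilde t}_3(s-1,n-1)+O(1)$ yields
\begin{equation}\label{eq:ER-red}
R^{1,t}_4(s,n)\ \le\ 2^{\,\bigl(R^{1,\tilde t}_3(s-1,n-1)\bigr)^{c_0}}
\end{equation}
for a constant $c_0=c_0(t)$.

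It then remains to insert Theorem~\ref{off}. As $s\ge4$ and, by the same monotonicity, $R^{1,\tilde t}_3(s-1,n-1)\le R^{1,\tilde t}_3(\max(s-1,4),n)$, that theorem gives $R^{1,\tilde t}_3(s-1,n-1)\le 2^{\log^{c_1}n}$ with $c_1=c_1(s,\tilde t)=c_1(s,t)$. Substituting into \eqref{eq:ER-red},
$$R^{1,t}_4(s,n)\ \le\ 2^{\bigl(2^{\log^{c_1}n}\bigr)^{c_0}}\ =\ 2^{\,2^{\,c_0\log^{c_1}n}}\ \le\ 2^{\,2^{\log^{c}n}}$$
for a suitable $c=c(s,t)$, which is the claimed bound. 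The main obstacle is thus not the (elementary) tower arithmetic but the first step: one must check that the Erd\H os--Rado reduction, performed in one dimension, outputs a relation $E'$ that is again semi-algebraic, still one-dimensional, and of complexity depending only on $t$---were the ambient dimension to grow, Theorem~\ref{off} could no longer be applied---and that the blow-up from $R^{1,\tilde t}_3$ to $R^{1,t}_4$ is only single-exponential. Both are exactly the algebraic-preservation features already built into the proof of Theorem~\ref{main}, so here they amount to a careful specialization rather than a new idea.
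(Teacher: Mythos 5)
Your proposal is correct and is essentially the paper's own argument: the corollary is obtained by running the semi-algebraic Erd\H os--Rado reduction of Theorem \ref{semirec} once in its off-diagonal form (giving $R^{1,t}_4(s,n)\le 2^{C M\log M}$ with $M=R^{1,t}_3(s-1,n-1)$, dimension and complexity preserved) and then inserting the quasi-polynomial bound of Theorem \ref{off}. The only cosmetic difference is your appeal to Tarski--Seidenberg: no quantifier elimination is needed, since (as in the proof of Theorem \ref{semirec}) the reduced $3$-ary relation is defined simply by substituting the last constructed point $q_{M+1}$ into the defining polynomials, which keeps the relation one-dimensional and of complexity at most $t$.
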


\noindent For $k\geq 5$, the result of Bukh and Matou\v{s}ek \cite{bukh} mentioned earlier implies that $R^{1,t}_{k}(s,n) \leq R^{1,t}_{k}(n) \leq 2^{2^{cn}}$, where $c = c(k,t)$.

For general $d$, we were only able to establish a good lower bound in the following special case. We say that the pair $(P,E)$ is {\em $K^{(3)}_4\setminus e$-free} if every {\em four} points induce at most {\em two} 3-tuples that belong to $E$.

\begin{theorem}
\label{david}
Let $P = \{p_1,...,p_N\}$ be a sequence of $N$ points in $\mathbb{R}^d$ with semi-algebraic relation $E\subset {P\choose 3}$ such that $E$ has complexity at most $t$.  If $(P,E)$ is $K^{(3)}_4\setminus e$-free, then there exists a subset $P'\subset P$ such that ${P'\choose 3} \cap E = \emptyset$ and

$$|P'| \geq 2^{\log^{c} N},$$

\noindent where $c < 1$ depends only on $d,t$.
\end{theorem}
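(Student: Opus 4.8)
The plan is to prove a recursive lower bound of \emph{multiplicative} type. Let $f(N)$ denote the largest $|P'|$ guaranteed by the theorem, the minimum being over all $N$-point configurations in $\mathbb{R}^{d}$ equipped with a $K_4^{(3)}\setminus e$-free semi-algebraic relation $E$ of complexity at most $t$. I would aim to establish
$$f(N)\ \ge\ f\!\left(N^{p}\right)\cdot f\!\left(N^{q}\right)$$
for suitable constants $p,q>0$ with $p+q<1$ depending only on $d$ and $t$. Iterating such a recurrence (using the trivial base cases $f(N)\ge 1$) gives $f(N)\ge 2^{(\log N)^{c}}$ for any $c<1$ with $p^{c}+q^{c}\le 1$, so one may take $c=c(d,t)<1$; this is exactly the claimed bound. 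Note that additive recursions of the form $f(N)\ge f(N^{\alpha})+O(1)$ would only yield $\Theta(\log N)$, so obtaining the genuinely multiplicative step is the whole point.

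Before the recursion I would record two structural facts. First, $(P,E)$ is $K_4^{(3)}\setminus e$-free precisely when, for every $v\in P$, the link $E_{v}=\{\{x,y\}:\text{the index-ordered triple on }\{v,x,y\}\text{ lies in }E\}$ is a \emph{triangle-free} graph on $P\setminus\{v\}$: a triangle $\{x,y,z\}$ in $E_{v}$ is exactly a copy of $K_4^{(3)}\setminus e$ on $\{v,x,y,z\}$. After appending the index of a point as an extra coordinate, each $E_{v}$ is again semi-algebraic of complexity bounded in terms of $t$, so the $k=2$ case of Theorem~\ref{main} applies to links (and to the other semi-algebraic graphs arising below). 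Second, both properties — bounded complexity and $K_4^{(3)}\setminus e$-freeness — are inherited by the restriction of $E$ to any subset of $P$, and also by the relation induced on a bounded set of ``representative'' points, one per cell of a polynomial partition of $P$, since a copy of $K_4^{(3)}\setminus e$ among representatives pulls back to one among the points they represent.

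For the recursive step I would partition $P$ into $r\approx N^{\theta}$ parts of size $\approx N^{1-\theta}$ (by iterating a same-type polynomial partition), so that the parts are again $K_4^{(3)}\setminus e$-free semi-algebraic instances of complexity at most $t$, and so that every triple meeting at least two parts is ``homogeneous'': its membership in $E$ depends only on which parts its points lie in and with what multiplicities. This records the behaviour of the two-part and transversal triples by a \emph{reduced graph} and a \emph{reduced $3$-graph} on the parts, both semi-algebraic of bounded complexity and both again $K_4^{(3)}\setminus e$-free. Applying the $k=2$ bound to the reduced graph yields a sub-collection $A$ of $r^{\beta}$ parts, $\beta=\beta(d,t)>0$, on which the reduced graph is homogeneous. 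If it is empty on $A$, I would recurse once more, inside the reduced $3$-graph on $A$, to select $f(r^{\beta})$ parts with no transversal triple in $E$; picking a recursively guaranteed independent set of size $f(N^{1-\theta})$ inside each and taking the union gives a set with no triple of $E$ of any of the three kinds, of size $\ge f(N^{\theta\beta})\cdot f(N^{1-\theta})$, the desired recursion with $p=\theta\beta$, $q=1-\theta$, $p+q=1-\theta(1-\beta)<1$. If instead the reduced graph is complete on $A$, then $K_4^{(3)}\setminus e$-freeness forces the reduced $3$-graph on $A$ to be empty (a transversal triple would combine with two-part triples into a $K_4^{(3)}$), so choosing one point from each part of $A$ already produces an independent set of size $\Omega(N^{\theta\beta})$, which is even stronger.

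The step I expect to be the main obstacle is keeping the description complexity under control through the recursion: the partition must simultaneously render all triples meeting at least two parts homogeneous, and the induced relations on the parts — in particular the reduced graph and reduced $3$-graph on representatives — must be expressible by semi-algebraic predicates whose complexity does \emph{not} grow with the recursion depth. This is why one must use the same-type machinery, with the degree of the partitioning polynomial depending only on $d$ and $t$, rather than a naive polynomial partition; it is also, morally, what prevents the argument from reaching a polynomial bound or from removing the dependence of $c$ on $d$. A secondary point needing care is the bookkeeping for triples meeting exactly two parts (two points in one part, one in another): one must arrange that the within-part pair already has a well-defined type relative to the other part, which requires the same-type partition to be taken fine enough, and then check in the ``complete'' case that the selected points genuinely form an independent set in $E$.
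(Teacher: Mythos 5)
Your overall scheme (a multiplicative recursion $f(N)\ge f(N^{p})f(N^{q})$ with $p+q<1$, which indeed iterates to $2^{(\log N)^{c}}$ with $p^{c}+q^{c}=1$) is fine as arithmetic, but the engine that is supposed to produce the recursion is an unproven structural statement, and I do not see how to obtain it. You need a partition of $P$ into $r\approx N^{\theta}$ parts of size $\approx N^{1-\theta}$ such that \emph{every} triple meeting at least two parts is homogeneous, including triples with two points in one part and one in another. No same-type or semi-algebraic regularity machinery gives this: the same-type lemma controls only \emph{transversal} tuples among a \emph{bounded} number of parts (with a shrinking factor depending on that number), and regularity-type partitions always come with exceptional tuples of parts. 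The two-in-one-part requirement is the real obstruction: for a fixed part $Q_i$, homogeneity of all triples $(x,y,z)$ with $x,y\in Q_i$ and $z$ elsewhere forces every other part to lie in a single cell of the arrangement of the $\Theta(t\,|Q_i|^{2})$ zero sets $\{z: f_l(x,y,z)=0\}$, an arrangement with $N^{\Theta(1)}$ cells whose shape depends on the points of $Q_i$ themselves; nothing guarantees that parts of size $N^{1-\theta}$ can be placed inside single cells. Your plan to reach $N^{\theta}$ parts by iterating a constant-size same-type partition does not repair this, because a triple whose two ``same-part'' points sit in different subparts of a common higher-level part is exactly a two-in-one-part triple at that higher level, so the problem reappears at every level. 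You flag this as a complexity-bookkeeping issue, but it is the missing lemma on which the entire recursion rests (the conditional pieces—the reduced graph/$3$-graph being $K^{(3)}_4\setminus e$-free, the complete case forcing an empty transversal $3$-graph—are fine).

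For comparison, the paper's proof avoids any such partition via a density dichotomy. If $|E|\le\alpha\binom{N}{3}$ with $\alpha=2^{-3\log^{c}N}$, Spencer's Tur\'an-type bound (Lemma \ref{spencer}) already gives an independent set of size $2^{\log^{c}N}$. Otherwise, the positive-fraction tripartite lemma of Fox, Gromov, Lafforgue, Naor and Pach (Lemma \ref{gromov}) yields disjoint $P_1,P_2,P_3$ of size $\alpha^{c_3}N$ with all transversal triples in $E$; then $K^{(3)}_4\setminus e$-freeness forces every triple with two points in $P_1$ and one in $P_2$ (or vice versa) to lie outside $E$, because together with a point of $P_3$ it would create three edges on four points. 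Inducting on $P_1$ and $P_2$ and taking the union gives $f(N)\ge 2f(\alpha^{c_3}N)$, which unwinds to the stated bound. So the quasi-polynomial growth comes from ``multiply by $2$ while shrinking by $2^{-O(\log^{c}N)}$'', not from a balanced multiplicative split; if you want to pursue your route, you would first have to prove the perfect crossing-homogeneous partition, which is a substantially harder (and possibly false) statement than the theorem itself.
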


\medskip

\noindent
{\bf Organization.} In the next section, we will prove Theorem \ref{main}, our upper bound on Ramsey numbers for semi-algebraic relations. Then, in Section \ref{lower}, we will prove the matching lower bound, Theorem \ref{main2}. We discuss the short proofs of our applications, Theorems \ref{ot} and \ref{osh}, in Section \ref{applications} and our results on the off-diagonal case, Theorems \ref{off} and \ref{david}, in Section \ref{offdiag}. We conclude with some further remarks.

\section{Upper bounds} \label{upper}

In this section, we prove Theorem \ref{main}.  First we briefly discuss the classic Erd\H os-Rado argument obtaining the recursive formula

$$R_k(n) \leq 2^{{R_{k-1}(n-1) \choose k-1}} + k - 2.$$

 Set $N = 2^{{R_{k-1}(n-1) \choose k-1}} + k - 2$ and $M = R_{k-1}(n-1)$.  Given a red-blue coloring $\chi$ on the $k$-tuples from $[N]$, Erd\H os and Rado greedily construct a sequence of distinct vertices $v_1,...,v_{M + 1}$ such that, for any given $(k-1)$-tuple $1 \leq i_1  < \cdots < i_{k - 1} \leq M$, all $k$-tuples $\{v_{i_1},...,v_{i_{k-1}},v_j\}$ with $j > i_{k-1}$ are of the same color, which  we denote by $\chi'(v_{i_1},...,v_{i_{k-1}})$.  Since $M = R_{k-1}(n-1)$, there is a monochromatic set of size $n-1$ in coloring $\chi'$. Together with the vertex $v_{M+1}$, these form a monochromatic clique of size $n$ in $\chi$.  The greedy construction of the sequence $v_1,...,v_{M + 1}$ is as follows.  First, pick $k-2$ arbitrary vertices $v_1,...,v_{k-2}$ and set $S_{k-2} = S\setminus\{v_1,...,v_{k-2}\}$.  After having picked $\{v_1,...,v_r\}$ we also have a subset $S_r$ such that for any $(k-1)$-tuple $v_{i_1},...,v_{i_{k-1}}$ with $1 \leq i_1 < \cdots < i_{k-1} \leq r$, all $k$-tuples $\{v_{i_1},...,v_{i_{k-1}},w\}$ with $w \in S_r$ are the same color.  Let $v_{
r+1}$ be an arbitrary vertex in $S_r$.  Let us call two elements $x,y \in S_r\setminus\{v_{r + 1}\}$ \emph{equivalent} if for every $(k-1)$-tuple $T \subset \{v_1,...,v_{r + 1}\}$ we have $\chi(T\cup\{x\}) = \chi(T\cup \{y\})$.  By the greedy construction, $x$ and $y$ are equivalent if and only if for every $(k-2)$-tuple $T\subset\{v_1,...,v_r\}$ we have $\chi(T\cup\{v_{r+1},x\}) = \chi(T\cup\{v_{r + 1},y\})$.  Therefore, there are ${r\choose k-2}$ possible choices for $T$ and hence there are at most $2^{r \choose k-2}$ equivalence classes.  We set $S_{r + 1}$ to be the largest of those classes.  Finally, we set $\chi'(v_{i_1},...,v_{i_{k-2}},v_{r+1}) = \chi(v_{i_1},...,v_{i_{k-2}},v_{r+1},w)$ where $w\in S_{r + 1}$.  As $N$ is large enough so that $S_{M}$ is nonempty, we can indeed construct the desired sequence of vertices.

There are two ways we improve the Erd\H os-Rado approach for semi-algebraic relations. Suppose that the $k$-tuples which are colored red under $\chi$ correspond to a semi-algebraic relation $E_1$ with bounded description complexity and let $E_2$ be the relation containing those $(k-1)$-tuples which are colored red by $\chi'$.  The main improvement comes from showing that $E_2$ will also be semi-algebraic with bounded description complexity.  Therefore, we can obtain by induction on $k$ an exponential improvement, starting with the result of Alon et al.~\cite{noga} as the base case $k = 2$.  A further improvement can be made by the observation that $S_{r}\setminus\{v_{r + 1}\}$ does not need to be partitioned into $2^{r \choose k-2}$ equivalence classes.  Instead, we can apply the Milnor-Thom Theorem (stated below) to partition $S_{r}\setminus\{v_{r + 1}\}$ into at most $O(r^{dk})$ equivalence classes with the desired properties, where the implied constant depends on the description complexity of $E_1$ and the uniformity $k$.

Let $f_1, ..., f_r$ be $d$-variate real polynomials with zero sets $Z_1, ..., Z_r$. A vector $\sigma \in \{-1, 0, +1\}^r$ is a {\it sign pattern} of $p_1, ..., p_r$ if there exists an $x \in \mathbb{R}^d$ such that the sign of $p_j(x)$ is $\sigma_j$ for all $j = 1,..., r$. The Milnor-Thom theorem (see \cite{basu, milnor, pet, thom}) bounds the number of cells in the arrangement of the zero sets $Z_1, ..., Z_r$ and, consequently, the number of possible sign patterns.

\begin{theorem}[Milnor-Thom]
Let $f_1,...,f_r$ be $d$-variate real polynomials of degree at most $D$.  The number of cells in the arrangement of their zero sets $Z_1,...,Z_r\subset \mathbb{R}^d$ and, consequently, the number of sign patterns of $f_1,...,f_r$ is at most

$$\left(\frac{50Dr}{d}\right)^d$$

\noindent for $r \geq d \geq 2$.
\end{theorem}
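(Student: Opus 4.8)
The plan is to derive the stated bound from the classical Oleinik--Petrovsky--Thom--Milnor estimate on the number of connected components of a real algebraic variety, combined with the standard trick that reduces counting cells in an arrangement to counting components of a single variety. First I would record the basic quantitative input: if $g_1,\dots,g_m$ are $e$-variate real polynomials each of degree at most $E$, then the real variety $\{g_1=\cdots=g_m=0\}\subset\mathbb{R}^e$ has at most $E(2E-1)^{e-1}$ connected components; equivalently, the variety defined by a single polynomial of degree $\le E$ in $e$ variables has at most $E(2E-1)^{e-1}$ components (one passes to a single polynomial by taking $g_1^2+\cdots+g_m^2$, whose degree is $\le 2E$, so the bound becomes $2E(4E-1)^{e-1}$; I will keep track of such factors of two carefully since they are exactly what must be absorbed into the constant $50$). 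This is the one ``hard'' ingredient, and I would simply cite it from \cite{milnor,thom,pet,basu} rather than reprove it, since the excerpt permits assuming earlier results and this is the genuinely analytic/topological heart of the matter.

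Next I would carry out the reduction from sign patterns to components. Given $f_1,\dots,f_r$ of degree $\le D$ in $d$ variables, a sign pattern $\sigma\in\{-1,0,+1\}^r$ corresponds to a nonempty cell of the arrangement of the zero sets $Z_1,\dots,Z_r$; it suffices to bound the total number of cells (of all dimensions) in this arrangement, and then note each realizable sign pattern contributes at least one cell. The standard device is to perturb: replace each $f_j$ by $f_j(f_j^2-\varepsilon^2)$ for a generic small $\varepsilon>0$, or more simply consider, for a small generic $\delta$, the single polynomial
$$
F(x) \;=\; \prod_{j=1}^{r} \bigl(f_j(x)-\delta_j\bigr)\bigl(f_j(x)+\delta_j\bigr)\bigl(f_j(x)\bigr)^{?}
$$
—here I would instead use the cleaner and more commonly quoted version: the number of cells in the arrangement of $Z_1,\dots,Z_r$ in $\mathbb{R}^d$ is at most the number of connected components of the complement of $\bigcup Z_j$ plus the number of components within the union, and each of these is bounded via Milnor by evaluating the component count of a single auxiliary variety of controlled degree. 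Concretely, the number of connected components of $\mathbb{R}^d\setminus\bigcup_{j}Z_j$ is at most the number of components of the variety $\{y\cdot\prod_j f_j(x) = 1\}$ in $\mathbb{R}^{d+1}$, which has degree $\le rD+1$, giving a bound of roughly $(rD+1)(2rD+1)^{d}$; and lower-dimensional cells are handled by induction on $d$ (restricting to the zero set of one polynomial at a time), each step multiplying by a factor polynomial in $rD$ and $d$.

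Then I would collect the estimates. Summing the contributions from cells of each dimension $0,1,\dots,d$—each bounded by an expression of the form $(cDr)^d$ for an absolute constant $c$—and using the hypothesis $r\ge d\ge 2$ to absorb the polynomial-in-$d$ losses and the $+1$'s into the base of the exponential, I would obtain a bound of the shape $(c'Dr/d)^d$; the remaining task is purely arithmetic bookkeeping to verify that the accumulated constant $c'$ is at most $50$. I expect this constant-chasing to be the main obstacle in writing a clean self-contained argument: the various perturbation and induction steps each introduce small multiplicative slack (factors of $2$, $3$, additive $1$'s, and a geometric series over dimensions), and one must be slightly careful—using $r\ge d$ and $D\ge 1$ at the right moments—to see that everything fits under $50$. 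Since the precise value of the constant is not essential for any application in the paper (any $(O(1)\cdot Dr/d)^d$ bound suffices downstream), I would either cite the sharp form directly from \cite{basu} (Basu--Pollack--Roy give exactly such explicit constants) and omit the reproof, or include only the reduction-to-a-single-variety step in detail and defer the constant to the cited sources.
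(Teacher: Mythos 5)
The paper itself does not prove this statement: it is quoted as a classical black-box result with a pointer to the literature (Milnor, Thom, Petrovskii--Oleinik, and Basu--Pollack--Roy), so there is no internal proof to match your argument against. Insofar as your proposal simply cites the same sources for both the component bound and the final explicit constant, it is consistent with what the paper does. But the part of your sketch that goes beyond citation has a genuine quantitative gap: the reduction you actually write down cannot produce a bound of the stated \emph{form}. Bounding the components of the complement of $\bigcup_j Z_j$ by the components of the single hypersurface $\{y\cdot\prod_j f_j(x)=1\}\subset\mathbb{R}^{d+1}$, of degree $rD+1$, yields roughly $(2rD)^{d+1}$. That is $\left(O(Dr)\right)^d$, not $\left(O(Dr)/d\right)^d$, and the missing factor $d^{-d}$ is not ``polynomial-in-$d$ slack'' that can be absorbed using $r\ge d\ge 2$: for large $d$ the two bounds differ by a factor of order $(d/c)^d$. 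The $d^d$ in the denominator comes from a structurally different count, in which the number of realizable sign conditions (or cells) is bounded by a sum of the shape $\sum_{i=0}^{d}\binom{r}{i}\,(O(D))^{d}$, with the binomial coefficient estimated by $\binom{r}{d}\le (er/d)^d$; this is the Warren/Basu--Pollack--Roy style refinement, and it is not reachable from the single product-polynomial trick by constant-chasing. So if you intend to give more than a citation, you must replace the product-variety step by an argument that only ever involves at most $d$ of the $r$ polynomials at a time (general position plus induction on dimension), which is where the $\binom{r}{\le d}$ terms, and hence the $Dr/d$ base, actually arise.

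Two smaller points. Your displayed auxiliary polynomial $F(x)$ is left literally incomplete (an unspecified exponent) and then abandoned mid-sentence; in a written proof you should commit to one perturbation scheme (e.g.\ infinitesimal shifts $f_j\pm\varepsilon$) and carry it through, since handling the cells on which some $f_j$ vanish is exactly where the bookkeeping is delicate. And the plan to treat lower-dimensional cells ``by induction on $d$, restricting to one zero set at a time'' multiplies the count by $r$ at each step if done naively, which again destroys the $(Dr/d)^d$ shape unless organized through the $\binom{r}{\le d}$ count. Given that any bound of the form $(O(1)\cdot Dr/d)^d$ suffices for every application in the paper, the cleanest course --- and the one the paper takes --- is to cite the explicit statement directly rather than reprove it.
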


Theorem \ref{main} easily follows from the following recursive formula.

\begin{theorem}
\label{semirec}
Set $M = R^{d,t}_{k-1}(n-1)$.  Then, for every $k \geq 3$,

$$R^{d,t}_{k}(n)  \leq 2^{C_1M\log M},$$

\noindent where $C_1=C_1(k,d,t)$.
\end{theorem}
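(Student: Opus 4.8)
\textbf{Proof proposal for Theorem \ref{semirec}.}

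The plan is to follow the Erd\H os--Rado construction described above, but replace the crude bound of $2^{\binom{r}{k-2}}$ equivalence classes with a bound that is only polynomial in $r$, using the Milnor--Thom theorem, and simultaneously verify that the derived coloring $\chi'$ on $(k-1)$-tuples is again semi-algebraic of bounded complexity so that the induction on $k$ can proceed. Set $N$ to be roughly $2^{C_1 M \log M}$ with $M = R^{d,t}_{k-1}(n-1)$, and let $P = \{p_1,\dots,p_N\} \subset \mathbb{R}^d$ be an ordered point set equipped with a $k$-ary semi-algebraic relation $E$ of complexity at most $t$, defined by polynomials $f_1,\dots,f_t$ and a Boolean function $\Phi$. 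Greedily build a sequence $v_1,\dots,v_{M+1}$ together with shrinking witness sets $P = S_0 \supseteq S_1 \supseteq \cdots$: having chosen $v_1,\dots,v_r$ and $S_r$ with the property that for every $(k-1)$-subset $T \subseteq \{v_1,\dots,v_r\}$ all $k$-tuples $T \cup \{w\}$, $w \in S_r$, lie in $E$ or all lie outside $E$, pick $v_{r+1} \in S_r$ arbitrarily and partition $S_r \setminus \{v_{r+1}\}$ into classes according to the ``type'' of a point $x$, meaning the value of $E$ on $T' \cup \{v_{r+1}, x\}$ over all $(k-2)$-subsets $T' \subseteq \{v_1,\dots,v_r\}$.

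The first key step is to bound the number of such types. For each of the $\binom{r}{k-2}$ subsets $T' = \{v_{i_1},\dots,v_{i_{k-2}}\}$, whether $(v_{i_1},\dots,v_{i_{k-2}},v_{r+1},x) \in E$ is governed by the signs of the $t$ polynomials $f_1,\dots,f_t$ evaluated at $(v_{i_1},\dots,v_{i_{k-2}},v_{r+1},x)$; fixing the first $k-1$ coordinates, each such evaluation is a polynomial in the $d$ coordinates of $x$ of degree at most $t$. Altogether we get a collection of at most $t\binom{r}{k-2} = O(r^{k-2})$ $d$-variate polynomials in $x$ of degree at most $t$, and the type of $x$ is determined by its sign pattern with respect to this collection. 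By Milnor--Thom, the number of sign patterns, hence the number of types, is at most $\left(\tfrac{50 t \cdot t\binom{r}{k-2}}{d}\right)^d = O(r^{d(k-2)})$, where the constant depends only on $k,d,t$. Choosing $S_{r+1}$ to be the largest type class, we get $|S_{r+1}| \geq (|S_r|-1)/O(r^{d(k-2)})$, so for the construction to run to $r = M$ it suffices that $N$ exceed $M \cdot \prod_{r=1}^{M} O(r^{d(k-2)}) = 2^{O(M\log M)}$, which is exactly the claimed bound on $R^{d,t}_k(n)$.

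The second key step, and the one I expect to be the genuine obstacle, is showing that the coloring $\chi'$ on $(k-1)$-tuples of $\{v_1,\dots,v_{M+1}\}$ is itself semi-algebraic of complexity bounded in terms of $k,d,t$ only (not $M$), so that the inductive hypothesis $M = R^{d,t}_{k-1}(n-1)$ applies to it. The issue is that the value $\chi'(v_{i_1},\dots,v_{i_{k-1}})$ was defined as the common value of $E$ on $(v_{i_1},\dots,v_{i_{k-1}},w)$ for $w$ in the relevant witness set, which a priori depends on the whole greedy history. The resolution is to encode each $v_i$ not by its point in $\mathbb{R}^d$ alone but by an augmented point in $\mathbb{R}^{d'}$ (with $d'$ bounded in terms of $d,k$) that also records the coordinates of $v_{i}$ together with, say, a small number of the previously chosen $v_j$'s needed to pin down the type, and then to express membership of a $(k-1)$-tuple in $E_2 := \{\chi' = \text{red}\}$ as a first-order formula over the reals in these augmented coordinates: ``there exists a witness point $w$ realizing the correct sign pattern with respect to all earlier $v_j$, and $(v_{i_1},\dots,v_{i_{k-1}},w) \in E$.'' By quantifier elimination for real closed fields (effective Tarski--Seidenberg), such a formula defines a semi-algebraic set whose complexity is bounded by a function of the complexity of $E$ and of $k,d$ only. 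One must be careful that the quantifier $\exists w$ ranges over a set that is itself definable with bounded complexity and that the elimination does not blow up the number or degree of polynomials beyond a bound independent of $M$; this bookkeeping is the crux. Granting this, induction on $k$ with the base case $k=2$ supplied by the theorem of Alon, Pach, Pinchasi, Radoi\v ci\'c, and Sharir completes the argument, and Theorem \ref{main} follows by unwinding the recursion $R^{d,t}_k(n) \le 2^{C_1 M \log M}$ down to $R^{d,t}_2(n) \le n^{O(1)}$, yielding a tower of height $k-1$.
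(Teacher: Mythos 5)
Your first half---the Erd\H os--Rado greedy construction with the $2^{\binom{r}{k-2}}$ equivalence classes replaced by a Milnor--Thom count of sign patterns of the $O(t\binom{r}{k-2})$ polynomials $f_j(v_{i_1},\dots,v_{i_{k-2}},v_{r+1},x)$ in the $d$ coordinates of $x$---is exactly the paper's argument, and the resulting recursion $|S_{r+1}|\geq (|S_r|-1)/O(r^{O(1)})$ does give the claimed $2^{C_1M\log M}$ bound. The problem is the second half. You correctly identify the crux (that the derived $(k-1)$-ary relation must again be semi-algebraic with complexity bounded in $k,d,t$ only, independent of $M$), but you do not close it, and the route you sketch is unlikely to: the ``correct sign pattern with respect to all earlier $v_j$'' that a witness $w$ must realize is the sign condition cutting out the final cell, which involves up to $t\binom{M}{k-2}$ polynomials indexed by the whole greedy history. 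A first-order formula quantifying over such a witness therefore has description length growing with $M$, and Tarski--Seidenberg / effective quantifier elimination only bounds the complexity of the output in terms of the complexity of the input formula---so this does not yield a bound independent of $M$. Likewise, ``recording a small number of the previously chosen $v_j$'s needed to pin down the type'' is not justified: the type is a priori determined by all $\binom{r}{k-2}$ earlier $(k-2)$-subsets, and no bounded subcollection obviously suffices.

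The paper avoids all of this with a much simpler device, and no quantifiers at all. By the greedy invariant, for any $(k-1)$-tuple $(q_{i_1},\dots,q_{i_{k-1}})$ from the constructed sequence, \emph{every} later point of the sequence gives the same verdict when appended; in particular the single last point $q_{M+1}$ does. So one simply defines
$$E_2^{\ast}=\{x\in\mathbb{R}^{d(k-1)} : \Phi(f_1(x,q_{M+1})\geq 0,\dots,f_t(x,q_{M+1})\geq 0)=1\},$$
i.e., the same polynomials with the fixed point $q_{M+1}$ substituted into the last block of variables. This is manifestly semi-algebraic with complexity at most $t$ (same $t$ polynomials, same degrees, same Boolean function), the inductive hypothesis applied to $F=\{q_1,\dots,q_M\}$ yields a homogeneous $(n-1)$-set for $E_2$, and appending $q_{M+1}$ itself gives the homogeneous $n$-set for $E_1$, again by the greedy invariant. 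So the idea you flag as ``the genuine obstacle'' is precisely the step your proposal is missing; once you replace the quantified-witness/augmented-coordinates scheme by substitution of the canonical witness $q_{M+1}$, the rest of your argument goes through as written.
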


\begin{proof}
Let $P =\{p_1,...,p_N\}$ be a set of $N = 2^{C_1M\log M}$ points in $\mathbb{R}^d$ with semi-algebraic relation $E_1\subset {P\choose k}$ such that $E_1$ has 
complexity at most $t$ and where $C_1$ is a constant that will be specified later.  As mentioned earlier, we can represent $E_1$ by the set $E_1^{\ast} \subset 
\mathbb{R}^{dk}$ that satisfies $(1)$.  Since $E_1^{\ast}$ is semi-algebraic with complexity at most $t$, there exist polynomials $f_1,f_2,...,f_t \in 
\mathbb{R}[x_1,....,x_{dk}]$ of degree at most $t$ and a Boolean function $\Phi$ such that

$$E_1^{\ast} = \left\{ x \in \mathbb{R}^{dk}: \Phi\left(f_1(x) \geq 0,f_2(x) \geq 0,...,f_t(x) \geq 0\right)=1\right\}.$$

In what follows, we will recursively construct a sequence of points $q_1,...,q_r$ from $P$ and a subset $S_r \subset P$, where $r = k-2,k-1,...,M + 1$, such that the following holds. Every $(k-1)$-tuple $(q_{i_1},...,q_{i_{k-1}})\subset \{q_1,...,q_{r-1}\}$ with $i_1 < i_2 < \cdots <i_{k-1}$ has the property that either $(q_{i_1},...,q_{i_{k-1}}, q) \in E_1^{\ast}$ for every point $q \in \{q_j:i_{k-1} < j \leq r\} \cup S_r$ or $(q_{i_1},...,q_{i_{k-1}}, q) \not\in E_1^{\ast}$ for every point $q \in \{q_j:i_{k-1} < j \leq r\} \cup S_r$.  Also, $$|S_r| \geq \frac{N}{C_2^r  r^{dkr}}-r,$$ where $C_2=C_2(k,d,t)$ is a constant depending only on $k$, $d$, and $t$. Furthermore, for $i<j$, $q_i$ comes before $q_j$ in the original ordering and every point in $S_r$ comes after $q_r$ in the original ordering.

We start by selecting the $k-2$ points $\{q_1,...,q_{k-2}\}  = \{p_1,...,p_{k-2}\}$ from $P$ and setting $S_{k-2} = P \setminus\{p_1,...,p_{k-2}\}$.  After obtaining $\{q_1,...,q_r\}$ and $S_r$, we define $q_{r+1}$ and $S_{r+1}$ as follows.  Let $q_{r+1}$ be the smallest indexed element in $S_r$ and fix a $(k-2)$-tuple $(q_{i_1},q_{i_2},...,q_{i_{k-2}}) \subset \{q_1,...,q_r\}$.  Then, for each $j$ such that $1 \leq j \leq t$, we define $d$-variate polynomials $h_j \in \mathbb{R}[x_1,...,x_d]$ such that, for $x = (x_1,...,x_d) \in \mathbb{R}^d$,

$$h_j(x) = f_j(q_{i_1},q_{i_2},...,q_{i_{k-2}},q_{r+1},x).$$

%Let $\mathcal{Z}_j = \{x \in \mathbb{R}^d: h_j(x) = 0\}$ be the zero set of $h_j$.  

After doing this for each $(k-2)$-tuple in $\{q_1,...,q_r\}$, we have generated at most $t{r \choose k-2}$ zero sets in $\mathbb{R}^d$.  By the Milnor-Thom theorem, the number of cells in the arrangement of these zero sets is at most $C_2r^{dk}$, where $C_2 = C_2(k,d,t)$.  By the pigeonhole principle, there exists a cell $\Delta\subset \mathbb{R}^{q}$ that contains at least $(|S_r| - 1)/C_2r^{dk}$ points of $S_r$.  The $h_i$ have the same sign pattern for each point in $\Delta$.  In other words, for any fixed $(k-1)$-tuple $(q_{i_1},...,q_{i_{k-1}}) \subset \{q_1,...,q_{r+1}\}$, we have either

$$(q_{i_1},...,q_{i_{k-1}},p_l) \in E_1^{\ast} \hspace{1cm}\forall p_l \in \Delta$$

\noindent or

$$(q_{i_1},...,q_{i_{k-1}},p_l) \not\in E_1^{\ast} \hspace{1cm}\forall p_l \in \Delta.$$

\noindent Let $S_{r+1}$ be the set of points $p_l$ in the cell $\Delta$.  Then we have the recursive formula

$$|S_{r+ 1}| \geq \frac{|S_r| -1 }{C_2r^{dk}}.$$

\noindent Substituting in the lower bound on $|S_r|$, we obtain the desired bound 

$$|S_{r+ 1}| \geq \frac{2^{C_1M\log M}}{(C_2)^{r+1}(r+1)^{dk(r+1)}}- (r+1).$$
This shows that we can construct the sequence $q_1,\ldots,q_{r+1}$ and the set $S_{r+1}$ with the desired properties. 

\noindent Since $C_1=C_1(k,d,t)$ is sufficiently large and $M = R^{d,t}_{k-1}(n-1)$, we have

$$|S_{M}| \geq 1.$$

\noindent Hence, $\{q_1,...,q_{M +1}\}$ is well defined for $M = R^{d,t}_{k-1}(n-1)$.  Set $F = \{q_1,...,q_{M}\}$.  We define the semi-algebraic set $E_2^{\ast}\subset \mathbb{R}^{q(k-1)}$ by

$$E_2^{\ast} = \{x \in \mathbb{R}^{q(k-1)}: \Phi(f_1(x,q_{M+1}) \geq 0,....,f_t(x,q_{M+1}) \geq 0)=1\},$$

\noindent and define the relation $E_2\subset {F\choose k-1}$ by

$$E_2 = \left\{(q_{i_1},...,q_{i_{k-1}}) \in {F\choose k-1}: (q_{i_1},...,q_{i_{k-1}}) \in E_2^{\ast}\right\}.$$

\noindent Therefore, $E_2$ is a semi-algebraic relation with complexity at most $t$.  
By the definition of the function $R^{d,t}_{k-1}(n-1)$, there exist $n-1$ 
points $\{q_{i_1},...,q_{i_{n-1}}\}\subset F$ such that every $(k-1)$-tuple belongs to $E_2$ or no such $(k-1)$-tuple belongs to $E_2$.  
Recall that by the construction of 
$F$, if $(q_{i_1},...,q_{i_{k-1}}) \in E_2^{\ast}$ then $(q_{i_1},...,q_{i_{k-1}}, q_j) \in E_1^{\ast}$ for every
$j>i_{k-1}$. Therefore
every $k$-tuple of $\{q_{i_1},...,q_{i_{n-1}}\} \cup \{q_{M+1}\}$ belongs to $E_1$ or no such $k$-tuple belongs to $E_1$.  Hence, the set 
$\{q_{i_1},...,q_{i_{n-1}}\} \cup \{q_{M+1}\}$ is homogeneous and this completes the proof. \end{proof}

\section{Lower bounds}
\label{lower}
In this section, we will prove Theorem \ref{main2}.  For every $n\geq 1$ and $k\geq 3$, we will construct an $N$-element point set $P_k(N)$ in $\mathbb{R}^{d}$ with semi-algebraic relation $E_k\subset {P_k(N)\choose k}$, where $N = \twr_{k-1}(n)$, $d = 2^{k-3}$, and $E_k^{\ast} \subset \mathbb{R}^{dk}$ is a semi-algebraic set with complexity at most $t = t(k)$ such that

$$\hom(P_{k}(N), E_{k}) =O(n),$$

\noindent where the implied constant depends only on $k$.

\subsection{Base case $k = 3$}
\label{k3}

\noindent Let $P_3(2^n) = \{1,2,...,2^{n}\}\subset \mathbb{R}$ with relation $E_3\subset {P_3(2^n)\choose 3}$, where

$$E_3^{\ast} = \left\{(x_1,x_2,x_3)\subset \mathbb{R}^3 : x_1 < x_2 < x_3,  x_1 + x_3 - 2x_2 \geq 0\right\}.$$

\noindent Clearly, $E_3^{\ast}$ has bounded complexity.  Now we claim that $\hom(P_3(2^n), E_3) \leq n+1$. Indeed, let $H= \{p_1,p_2,...,p_r\} \subset P_3(2^n)$ be a homogeneous subset such that $p_1 < p_2 < \cdots < p_r$.

 \medskip

 \noindent \emph{Case 1.}  Suppose that ${H\choose 3} \subset E_3.$  Then $p_{i+1} \geq 2p_i - p_1$ for all $1 \leq i \leq r-1$. 
Since $p_1 \geq 1$ and $p_2 \geq p_1+1$, we have by induction

$$p_i \geq 2^{i-2}+p_{1} \geq 2^{i-2}+ 1.$$

\noindent  Hence, if $r = n+2$, we have

$$2^{n} + 1 \leq p_r \leq 2^{n},$$

\noindent which is a contradiction.
\medskip

\noindent \emph{Case 2.}  Suppose that ${H\choose 3} \cap E_3 = \emptyset$.  Then, for $1 \leq  i < j < k\leq r$, we have

$$p_i + p_k - 2p_j < 0,$$

\noindent which implies

$$(2^n - p_i + 1) + (2^n -p_k + 1) -2(2^n - p_j + 1) > 0.$$

\noindent  Set $q_{r - j + 1} = 2^n - p_j + 1$ for $1\leq j \leq r$.  Then we have

$$1 \leq q_1 < q_2 < \cdots < q_r\leq 2^{n},$$

\noindent and $q_{i + 1} \geq 2q_{i} - q_1$.  By the same argument as above, the set $\{q_1,...,q_r\}$ must satisfy $r < n+2$.  This completes the proof.

\subsection{The Erd\H os-Hajnal stepping-up lemma}
\label{hajnal}
For $k\geq3$, we will adapt the Erd\H os-Hajnal stepping-up lemma to construct the point set $P_k(N)$ and the relation $E_k\subset {P_k(N)\choose k}$.  Before we describe this procedure, we will briefly sketch the classic Erd\H os-Hajnal stepping-up lemma (see also \cite{conlon},\cite{conlon2},\cite{graham}).

Let $k \geq 3$ and suppose that $E_1\subset{[N]\choose k}$ is a relation on $[N]$ such that $\hom([N],E_1) < n$.  The \emph{stepping-up lemma} uses $E_1$ to define a relation $E_2 \subset{[2^N]\choose k + 1}$ with the properties listed below.  We refer to $E_2$ as the step-up relation of $E_1$.

For any $a \in [2^N]$, write $a-1=\sum_{i=0}^{N-1}a(i)2^i$ with $a(i) \in \{0,1\}$ for each $i$. For $a \not = b$, let $\delta(a,b) = i+1$ where $i$ is the largest value for which $a(i) \not = b(i)$. Notice that

\begin{description}

\item[Property A:] $\delta(a,b) \not = \delta(b,c)$ for every triple $a<b<c$,

\item[Property B:] for $a_1 < \cdots < a_n$, $\delta(a_1,a_n) = \max_{1 \leq i \leq n-1}\delta(a_i,a_{i + 1})$.

\end{description}

Given any $(k+1)$-tuple $a_1<a_2<\cdots<a_{k+1}$ of $[2^N]$, consider the integers $\delta_i=\delta(a_i,a_{i+1}), 1\le i\le k$. If $\delta_1,\ldots,\delta_{k}$ form a monotone sequence, then let $(a_1,a_2,\ldots,a_{k+1})\in E_2$ if and only if $(\delta_1,\delta_2,\ldots,\delta_{k})\in E_1$.

Now we have to decide if the $(k+1)$-tuple $(a_1,\ldots,a_{k+1})\in E_2$ in the case when $\delta_1,\ldots,\delta_{k}$ is not monotone. We say that $i$ is a {\it local minimum} if $\delta_{i-1}>\delta_i<\delta_{i+1}$, a {\it local maximum} if $\delta_{i-1}<\delta_i>\delta_{i+1}$, and a \emph{local extremum} if it is either a local minimum or a local maximum.  This is well defined by Property A.  If $\delta_2$ is a local minimum, then set $(a_1,...,a_{k+1}) \in E_2$.  If $\delta_2$ is a local maximum, then set $(a_1,...,a_{k+1})\not\in E_2$.  All remaining edges will not be in $E_2$.

\smallskip

\begin{lemma}
\label{stepup}
If $\hom([N],E_1) < n$, then the step-up relation $E_2$ satisfies $\hom([2^N],E_2) < 2n + k - 4$.
   \end{lemma}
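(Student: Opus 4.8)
The goal is the classical stepping-up analysis: given a homogeneous set in the step-up relation $E_2$ on $[2^N]$, extract from it a homogeneous set in $E_1$ on $[N]$ whose size is roughly half as large, and then invoke the hypothesis $\hom([N],E_1)<n$. So suppose $B=\{a_1<a_2<\cdots<a_m\}\subseteq[2^N]$ is homogeneous for $E_2$; I want to show $m<2n+k-4$. Set $\delta_i=\delta(a_i,a_{i+1})$ for $1\le i\le m-1$. The first step is to analyze the sequence $\delta_1,\dots,\delta_{m-1}$ using Property A (no two consecutive $\delta$'s are equal, so local extrema are well defined) and the rule defining $E_2$ on non-monotone $(k+1)$-tuples.

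\textbf{Step 1: the $\delta$-sequence has few local extrema.} I claim that among $\delta_1,\dots,\delta_{m-1}$ there are very few indices $i$ ($2\le i\le m-2$) that are local extrema. Indeed, consider a window of $k+1$ consecutive elements $a_j,\dots,a_{j+k}$: its associated block $\delta_j,\dots,\delta_{j+k-1}$ has $\delta_{j+1}$ in the ``second position.'' By the step-up rule, $(a_j,\dots,a_{j+k})\in E_2$ forces $\delta_{j+1}$ to be a local minimum (or the block is monotone), while $(a_j,\dots,a_{j+k})\notin E_2$ forces $\delta_{j+1}$ to be a local maximum or non-extremal. Since $B$ is homogeneous, all these windows give the same answer, so all of $\delta_2,\delta_3,\dots,\delta_{m-2}$ that happen to be local extrema are of the \emph{same type} (all minima, or all maxima). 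But local minima and local maxima must alternate along a sequence, so there can be at most one local extremum among $\delta_2,\dots,\delta_{m-2}$ — actually one must be a bit more careful about boundary positions $\delta_1$ and $\delta_{m-1}$, so the honest conclusion is that $\delta_1,\dots,\delta_{m-1}$ splits into at most two monotone runs. (This is exactly the place where the constant $k-4$ versus $k+\text{const}$ bookkeeping enters, and I'd track it carefully but not belabor it here.)

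\textbf{Step 2: extract a homogeneous set for $E_1$.} Partition $\{1,\dots,m-1\}$ into the (at most two) maximal monotone runs of the $\delta$-sequence; the longer run has length at least $(m-1)/2$, say it is $\delta_p,\delta_{p+1},\dots,\delta_{p+\ell-1}$ with $\ell\ge (m-1)/2$, and it is monotone. Now look at the $\ell$ numbers $\delta_p,\dots,\delta_{p+\ell-1}$, all lying in $[N]$. For any $k$ consecutive of them, $\delta_{j},\dots,\delta_{j+k-1}$, monotonicity means the corresponding $(k+1)$-tuple $a_{j},\dots,a_{j+k}$ is governed by the \emph{monotone} clause of the step-up rule, so $(a_{j},\dots,a_{j+k})\in E_2$ iff $(\delta_{j},\dots,\delta_{j+k-1})\in E_1$; homogeneity of $B$ then forces all these $k$-subsets-of-consecutive-elements to behave the same way in $E_1$. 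Using Property B (the $\delta$ of the endpoints of a chain equals the max of the consecutive $\delta$'s) one upgrades ``consecutive $k$-tuples agree'' to ``the whole set $\{\delta_p,\dots,\delta_{p+\ell-1}\}$ is homogeneous for $E_1$,'' provided the $\delta$-values in this run are \emph{distinct} — which they are, because a monotone run of $\delta$'s coming from a genuine chain $a_p<\dots<a_{p+\ell}$ is strictly monotone (equal consecutive values are forbidden by Property A, and within a monotone run non-strictness can't happen). Hence $\hom([N],E_1)\ge \ell \ge (m-1)/2$, so $(m-1)/2 < n$, giving $m < 2n+1$; the extra additive $k$-dependent slack absorbs the careful accounting of the two boundary $\delta$'s and the ``consecutive implies global'' step, yielding $m<2n+k-4$.

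\textbf{Main obstacle.} The delicate point is Step 2's passage from ``all $k$-element subsets of consecutive elements of the run agree in $E_1$'' to ``the entire run is $E_1$-homogeneous.'' For a general $k$-tuple $\delta_{j_1}<\dots<\delta_{j_k}$ inside the monotone run (not consecutive in index), one must realize it as a set of consecutive differences of some $(k+1)$-chain: one chooses $b_1<\dots<b_{k+1}$ in $[2^N]$ with $\delta(b_s,b_{s+1})=\delta_{j_s}$, which is possible precisely because the $\delta_{j_s}$ are distinct and one has freedom in the lower-order bits — and here Property B guarantees the induced $\delta(b_s,b_{s+1})$ are exactly the prescribed values in the right monotone order. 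Getting this realization correct (and checking it lands inside a valid $(k+1)$-tuple behaving under the monotone clause) is the technical heart; everything else is the alternation-of-extrema observation from Step 1 plus arithmetic bookkeeping of the additive constant.
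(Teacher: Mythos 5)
Your outline follows the same two ingredients as the paper's proof (your Step 1 is its Case 2, where a forbidden type of local extremum is ruled out, and your Step 2 is its Case 1, where Property B is used along a monotone run), but as written Step 2 fails at exactly the point you call the technical heart. To transfer the $E_1$-status of an arbitrary $k$-subset $\delta_{j_1}<\dots<\delta_{j_k}$ of the run back to $E_2$, the realizing chain $b_1<\dots<b_{k+1}$ must consist of elements of the homogeneous set $B$ itself: homogeneity of $B$ says nothing about $(k+1)$-tuples containing newly constructed integers of $[2^N]$, so ``freedom in the lower-order bits'' is not a legitimate move. The correct realization stays inside $B$ and uses only Property B: for a decreasing run take $b_s=a_{j_s}$ for $s\le k$ and $b_{k+1}=a_{j_k+1}$, so that $\delta(b_s,b_{s+1})=\delta_{j_s}$; for an increasing run take $b_1=a_{j_1}$ and $b_{s+1}=a_{j_s+1}$. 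With that fix your Step 2 becomes precisely the paper's argument (stated there contrapositively: a $k$-subset of the $\delta$'s of the wrong colour would produce a wrongly coloured $(k+1)$-tuple inside $S$, contradicting homogeneity).

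Step 1's bookkeeping is also off in a way that matters for the stated constant. The boundary problem is not $\delta_1$ and $\delta_{m-1}$ (those are never local extrema) but the last $k-3$ interior positions $\delta_{m-k+2},\dots,\delta_{m-2}$: no window of $k+1$ consecutive elements of $B$ has its ``second $\delta$'' at such a position, so these positions are unconstrained, the full $\delta$-sequence can have up to $k-2$ extrema, and ``at most two monotone runs'' is false for $k\ge 4$. What is true, and what produces the constant, is that positions $2,\dots,m-k+1$ carry at most one extremum (all constrained extrema have the same type, and two extrema of the same type would force one of the opposite type between them), so the prefix $\delta_1,\dots,\delta_{m-k+2}$ contains a monotone run of length at least $\lceil (m-k+3)/2\rceil$; combined with the corrected Step 2 and $\hom([N],E_1)\le n-1$ this gives $m\le 2n+k-5<2n+k-4$. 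Your stated arithmetic ($m<2n+1$ plus ``the slack absorbs the rest'') does not actually yield $m<2n+k-4$ when $k=3$ or $4$, so the constant-tracking you deferred is not optional: it is exactly where the $k-4$ comes from.
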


\begin{proof}
Suppose for contradiction that the set of vertices $S = \{a_1,...,a_{2n+k-4}\}$ is homogeneous with respect to $E_2$, where $a_1<\ldots < a_{2n+k-4}$.  Without loss of generality, we can assume that ${S\choose k+1 }\subset E_2$.  Set $\delta_i = \delta(a_i,a_{i + 1})$.

\medskip

\noindent \emph{Case 1.}  Suppose that there exists a $j$ such that $\delta_j,\delta_{j + 1},...,\delta_{j + n - 1}$ forms a monotone sequence. First assume that

$$\delta_j > \delta_{j + 1} > \cdots >\delta_{j + n - 1}.$$

\noindent Since $\hom([N],E_1) < n$, there exists a subsequence $j\leq i_1 < \cdots <i_k \leq j+n - 1$ such that $(\delta_{i_1},...,\delta_{i_k})\not\in E_1$. But then the $(k+1)$-tuple $(a_{i_1},...,a_{i_k},a_{i_k + 1})\not\in E_2$. Indeed, by Property B,

$$\delta(a_{i_h},a_{i_{h + 1}}) = \delta(a_{i_h},a_{i_{h} + 1}) = \delta_{i_h}.$$

\noindent
Therefore, since $\delta_{i_1},....,\delta_{i_k}$ form a monotone sequence and  $(\delta_{i_1},....,\delta_{i_k}) \not\in E_1$, we have that the $(k+1)$-tuple $(a_{i_1},...,a_{i_k},a_{i_k + 1})\not\in E_2$, contradicting that ${S\choose k+1 }\subset E_2$.  A similar argument holds if $\delta_j < \delta_{j + 1} < \cdots <\delta_{j + n - 1}.$

\medskip

\noindent \emph{Case 2.}  Since $\delta_1,\ldots,\delta_{2n-2}$ does not have a monotone subsequence of length $n$, it must have at least two local extrema.  Since between any two local minimums there must be a local maximum, this implies that there exists a local maximum among $\delta_1,\ldots,\delta_{2n-2}$ and, therefore, a $(k + 1)$-tuple not in $E_2$, contradicting our assumption.
\end{proof}

\subsection{Stepping up algebraically}

\noindent We will now adapt the Erd\H os-Hajnal stepping-up lemma to our semi-algebraic framework.  First we need some definitions.  For a point $p \in \mathbb{R}^d$, we let $B(p,\epsilon)$ be the closed ball in $\mathbb{R}^d$ of radius $\epsilon$ centered at $p$.   For any two points $p_1,p_2 \in \mathbb{R}^d$, where $p_1 = (a_{1,1},a_{1,2},...,a_{1,d})$ and $p_2 = (a_{2,1},a_{2,2},...,a_{2,d})$, we write $p_1\prec p_2$ if $a_{1,i} < a_{2,i}$ for all $1\leq i \leq d$.  We say that the set of points $\{p_1,...,p_N\}$ is \emph{increasing}, if

$$p_1\prec p_2\prec \cdots \prec p_N.$$

\noindent For $r > 0$, we say that $\{p_1,...,p_N\}$ is \emph{$\epsilon$-increasing} if, for $1 \leq i \leq N$, $q_i \in B(p_i,\epsilon)$ implies that

$$q_1 \prec q_2\prec\cdots \prec q_N.$$

\noindent
For any two points $q_1,q_2 \in \mathbb{R}^{2d}$, where

$$q_1 = (x_1,y_1,x_2,y_2,..., x_d,y_d)\hspace{.4cm}\textnormal{and}\hspace{.4cm} q_2 = (x'_1,y'_1,x'_2,y'_2,...,x'_d,y'_d),$$

\noindent we define the \emph{slope} $\sigma(q_1,q_2)$ of $q_1,q_2$ to be

$$\sigma(q_1,q_2) = \left(\frac{y'_1 - y_1}{x'_1 - x_1},...,\frac{y'_i - y_i}{x'_i - x_i},..., \frac{y'_d -y_d}{x'_d - x_d }\right) \in \mathbb{R}^d.$$

\noindent Thus, the $i$th coordinate of $\sigma(q_1,q_2)$ is the slope of the line through the points $(x_i,y_i)$ and $(x'_i,y'_i)$ in $\mathbb{R}^2$.

Let $P_k(N) = \{p_1,...,p_N\}$ be a set of $N$ points in $\mathbb{R}^d$ with relation $E_k \subset {P_k(N) \choose k}$ such that $E^{\ast}_{k}$ is a semi-algebraic set in $\mathbb{R}^{dk}$ with complexity at most $t$.  We say that $(P_k(N), E_k)$ is $\epsilon$-deep if moving any point in $P_k(N)$ by a distance at most $\epsilon$ will not change the relation $E_k$.  More precisely, $(P_k(N), E_k)$ is \emph{$\epsilon$-deep} if, for every $(p_{i_1},...,p_{i_k}) \in E_k$,

$$(q_{i_1},...,q_{i_k})\in E_k^{\ast}  \hspace{.4cm}\textnormal{for all}\hspace{.4cm} q_{i_1} \in B(p_{i_1},\epsilon), q_{i_2} \in B(p_{i_2},\epsilon), ....,q_{i_k} \in B(p_{i_k},\epsilon)$$

\noindent and, for every $(p_{i_1},...,p_{i_k}) \not\in E_k$,

$$(q_{i_1},...,q_{i_k})\not\in E_k^{\ast}  \hspace{.4cm}\textnormal{for all}\hspace{.4cm} q_{i_1} \in B(p_{i_1},\epsilon), q_{i_2} \in B(p_{i_2},\epsilon), ....,q_{i_k} \in B(p_{i_k},\epsilon).$$

\noindent
With these definitions in hand, our algebraic stepping-up lemma is now as follows.

\begin{lemma}[Stepping up]
\label{stepalg}
For $k \geq 3$ and $\epsilon > 0$,  let $P_k(N), E_k$, and $E_k^{\ast}$ be as above and such that $P_k(N)$ is $\epsilon$-increasing and $(P_k(N), E_k)$ is $\epsilon$-deep.  Then there exists $\epsilon_1 > 0$ such that there is an $\epsilon_1$-increasing point set $P_{k + 1}(2^N)$ of $2^N$ points in $\mathbb{R}^{2d}$ and a semi-algebraic set $E^{\ast}_{k + 1}$ in $\mathbb{R}^{2d(k+ 1)}$ with complexity $t_1 = t_1(t,k)$ for which $E_{k + 1} \subset{P_{k + 1}(2^N) \choose k + 1}$ is the step-up relation of $E_k$ and $(P_{k + 1}(2^N),E_{k + 1})$ is $\epsilon_1$-deep.
\end{lemma}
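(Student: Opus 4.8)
The plan is to build the point set $P_{k+1}(2^N)$ by literally encoding the Erd\H os--Hajnal binary representation geometrically, placing each $a \in [2^N]$ at a point in $\mathbb{R}^{2d}$ whose coordinates record both the location of the ``ancestor'' point $p_{\delta}\in P_k(N)$ at the critical bit $\delta = \delta(a,b)$ and enough auxiliary data (via the slope coordinates) to recover $\delta(a_i,a_{i+1})$ from the pair $(a_i,a_{i+1})$ by a semi-algebraic test. Concretely, I would use the first $d$ coordinates of the new point to encode a scaled version of $a-1 = \sum a(i)2^i$ in each coordinate (so that comparing two such points recovers $\delta$ as the top differing bit, using Properties A and B), and the remaining $d$ coordinates as ``heights'' chosen so that the slope vector $\sigma(q_i,q_j)$ between two new points is a strictly monotone function of $\delta(a_i,a_j)$, coordinatewise. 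The upshot should be: there are semi-algebraic predicates of bounded complexity that, given the $2d$-dimensional images of $a_i < a_{i+1} < a_{i+2}$, decide whether $\delta_i < \delta_{i+1}$ or $\delta_i > \delta_{i+1}$, and, more generally, given the images of $a_{i_1} < \cdots < a_{i_k} < a_{i_k+1}$ with $\delta_{i_1},\dots,\delta_{i_k}$ monotone, recover the point $(p_{\delta_{i_1}},\dots,p_{\delta_{i_k}}) \in \mathbb{R}^{dk}$ up to the $\epsilon$-balls, so that membership in $E_k^\ast$ can be tested. Define $E_{k+1}^\ast$ by composing these predicates exactly according to the step-up rule: if the slopes read off as monotone, defer to the $E_k^\ast$-test on the recovered $k$-tuple of ancestors; if $\delta_2$ is read as a local minimum (resp.\ maximum), declare the $(k+1)$-tuple in (resp.\ not in) $E_{k+1}^\ast$; otherwise not in. Since all of these are Boolean combinations of a bounded number of polynomial inequalities of bounded degree, $E_{k+1}^\ast$ has complexity $t_1 = t_1(t,k)$.

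The key steps, in order, are: (1) fix the embedding $a \mapsto$ point in $\mathbb{R}^{2d}$, using a sufficiently rapidly growing scale in the first $d$ coordinates so that $\delta(a,b)$ is detectable by comparing coordinates and so that the embedding is $\epsilon_1$-increasing for a suitable $\epsilon_1$; (2) choose the last $d$ coordinates (the ``$y$''-values) so that $\sigma$ between images is an increasing function of $\delta$ in each coordinate, hence the relative order of $\delta_i$ and $\delta_{i+1}$ — and of any $\delta_{i_h}$ versus $\delta_{i_{h+1}}$ — is a semi-algebraic function of the images, robustly under $\epsilon_1$-perturbations; (3) verify the ``ancestor recovery'' claim: when $\delta_{i_1} > \cdots > \delta_{i_k}$ (the decreasing case; the increasing case is symmetric), the common high bits force each $a_{i_h}$ and $a_{i_h}+1$ to agree with $p_{\delta_{i_h}}$'s encoding down to bit $\delta_{i_h}$, so a bounded-degree formula in the coordinates reconstructs (an $\epsilon$-approximation of) $p_{\delta_{i_h}}$; (4) assemble $E_{k+1}^\ast$ as the explicit Boolean combination above and check it is $\epsilon_1$-deep — this follows because every comparison used (bit comparisons, slope sign tests, and the $E_k^\ast$-membership test) was arranged to be stable under moving each point by $\epsilon_1$, the first two by leaving slack in the geometric scales and the last by the $\epsilon$-deepness hypothesis on $(P_k(N),E_k)$; (5) note $E_{k+1}$ is by construction the step-up relation of $E_k$, so Lemma \ref{stepup} applies and $\hom(P_{k+1}(2^N),E_{k+1}) < 2n+k-4$ whenever $\hom(P_k(N),E_k) < n$.

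I expect the main obstacle to be step (3) together with the bounded-complexity bookkeeping in step (4): one must certify that a single fixed-complexity semi-algebraic formula works simultaneously for all $(k+1)$-tuples and all positions of the local extremum, i.e.\ that the number and degree of polynomials do not grow with $N$. This is where the geometric encoding has to be chosen with care — the slope coordinates must let us compare $\delta$-values pairwise with $O(1)$ polynomials, and the ancestor-recovery must be expressible by $O(1)$ polynomials of degree $O_k(1)$ independent of $N$. The dimension doubling from $d$ to $2d$ at each step (hence $d = 2^{k-3}$ for the base dimension $d=1$ at $k=3$) is exactly the price of carrying the slope coordinates, and I would make peace with that rather than try to avoid it. A secondary, more routine obstacle is choosing $\epsilon_1$ and the coordinate scales in the right order: $\epsilon_1$ must be small enough relative to the gaps created by the encoding, and the encoding scales must be large enough relative to $\epsilon$ (the depth of the old relation) so that $\epsilon$-perturbations of the recovered ancestors stay within their prescribed $B(p,\epsilon)$ balls; this is a finite chain of ``choose this large/small enough'' quantifier juggling with no real content.
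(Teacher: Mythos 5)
Your overall architecture matches the paper's: define $E^{\ast}_{k+1}$ on $(k+1)$-tuples by comparing the slopes of consecutive pairs, deferring to $E^{\ast}_k$ when the slopes are monotone, declaring membership when the second slope is a local minimum, clearing denominators to keep bounded complexity, and using $\epsilon$-deepness to make everything stable. But there is a genuine gap at the one point where the whole lemma lives or dies: your specification of the embedding. You only require that $\sigma(q_i,q_j)$ be ``a strictly monotone function of $\delta(a_i,a_j)$, coordinatewise.'' That suffices to \emph{compare} $\delta$-values (so conditions of type $C_1$ are fine), but it does not let you test whether $(p_{\delta_{i_1}},\ldots,p_{\delta_{i_k}})\in E_k$: the set $E^{\ast}_k$ is a condition on the actual ancestor points in $\mathbb{R}^{dk}$, not on the indices, and the correspondence $i\mapsto p_i$ is an arbitrary $\epsilon$-increasing sequence with no algebraic structure. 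Your fallback in step (3) --- reconstructing $p_{\delta_{i_h}}$ from ``common high bits'' by a bounded-degree formula --- is exactly the step that cannot work as stated: no fixed semi-algebraic formula, with number and degree of polynomials independent of $N$, can map bit-encoded indices to the coordinates of arbitrary prescribed points $p_1,\ldots,p_N$. You flag this as the main obstacle, but the proposal does not resolve it, and resolving it is the content of the lemma.

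The fix is to make the geometry hand you the ancestor directly: build $P_{k+1}(2^N)$ so that $\sigma(q_a,q_b)\in B\bigl(p_{\delta(a,b)},\epsilon/2\bigr)$, i.e.\ the slope vector is not merely monotone in $\delta$ but is an $\epsilon/2$-approximation of the ancestor point itself. Then the ``recovery formula'' is just the slope map (rational, bounded degree), and the $\epsilon$-deepness of $(P_k(N),E_k)$ lets you substitute slopes for ancestors in $E^{\ast}_k$ without changing membership. The paper achieves this recursively: place two shrunken copies of $P_{k+1}(2^{N-1})$ inside two tiny balls $B_1$, $B_2$ positioned so that every slope from $B_1$ to $B_2$ lies in $B(p_N,\epsilon/2)$, and prove the key observation $\sigma(q_i,q_j)\in B(p_{\delta(i,j)},\epsilon/2)$ by induction on $N$. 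Your scaled-coordinates idea can be repaired in the same spirit (e.g.\ $x_j(a)=\sum_i a(i)\lambda^i$ and $y_j(a)=\sum_i a(i)\lambda^i\,a_{i+1,j}$ with $\lambda$ huge, so the top differing bit dominates and the slope approximates $p_{\delta}$), but the approximation-of-the-ancestor property must be stated and proved; once it is, your steps (4) and (5) go through essentially as in the paper, and the bit-based recovery machinery becomes unnecessary.
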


\noindent From Lemmas \ref{stepup} and \ref{stepalg}, we have the following immediate corollary.

\begin{corollary}
\label{smallhom}
$\hom(P_{k + 1}(2^N),E_{k + 1}) \leq 2\cdot \hom(P_{k }(N),E_{k }) + k - 4.$
\end{corollary}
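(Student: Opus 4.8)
The corollary is meant to follow formally from Lemmas~\ref{stepup} and~\ref{stepalg}, so the plan is simply to align the two statements. First I would pass from the geometric picture to the combinatorial one: since $P_k(N)$ carries a fixed ordering $p_1 \prec \cdots \prec p_N$, identify it with $[N]$ via that ordering, and likewise identify $P_{k+1}(2^N)$ with $[2^N]$. Under these identifications $E_k$ becomes an ordinary $k$-ary relation on $[N]$ and $E_{k+1}$ an ordinary $(k+1)$-ary relation on $[2^N]$, and the invariants $\hom(P_k(N),E_k)$ and $\hom(P_{k+1}(2^N),E_{k+1})$ are literally the quantities $\hom([N],E_k)$ and $\hom([2^N],E_{k+1})$ from Section~\ref{hajnal}.

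Next I would invoke the part of Lemma~\ref{stepalg} that is relevant here, namely that, under these identifications, $E_{k+1}$ is the step-up relation of $E_k$ in the precise sense of Section~\ref{hajnal}. The remaining conclusions of Lemma~\ref{stepalg}---that $E^{\ast}_{k+1}$ has bounded complexity, that $P_{k+1}(2^N)$ is $\epsilon_1$-increasing, and that $(P_{k+1}(2^N),E_{k+1})$ is $\epsilon_1$-deep---are exactly the hypotheses needed to iterate the construction, not to derive the counting bound, so they play no role in the proof of the corollary itself. With $E_{k+1}$ identified as a genuine step-up relation, Lemma~\ref{stepup} applies directly: setting $n := \hom(P_k(N),E_k)+1$, so that $\hom([N],E_k) < n$, the lemma gives $\hom([2^N],E_{k+1}) < 2n + k - 4$. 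Translating back and substituting the value of $n$ yields $\hom(P_{k+1}(2^N),E_{k+1}) \le 2\,\hom(P_k(N),E_k) + k - 4$, up to the usual $\pm 1$ ambiguity in converting a strict inequality into an upper bound, which is irrelevant for the tower-height computation in Theorem~\ref{main2}.

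I do not expect a genuine obstacle: all the substantive work---the inductive construction of the point set together with the verification that the step-up relation remains semi-algebraic of bounded complexity (Lemma~\ref{stepalg}), and the Erd\H os--Hajnal counting argument itself (Lemma~\ref{stepup})---has already been carried out. The only things to watch are bookkeeping matters: (i) making sure the uniformity indices line up, so that $E_k$ plays the role of ``$E_1$'' and the additive term reads $k-4$ rather than $(k+1)-4$; (ii) checking that the order-isomorphism $P_{k+1}(2^N)\cong[2^N]$ is compatible with the binary-expansion function $\delta$ used to define the step-up relation, so that Lemma~\ref{stepalg}'s assertion ``$E_{k+1}$ is the step-up relation of $E_k$'' really is the hypothesis of Lemma~\ref{stepup}; and (iii) being careful with strict versus non-strict inequalities when passing between ``$\hom(\cdot) < m$'' and ``$\hom(\cdot) \le m-1$''.
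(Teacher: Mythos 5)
Your proposal matches the paper exactly: the paper offers no separate argument, stating the corollary as an immediate consequence of Lemmas~\ref{stepup} and~\ref{stepalg}, which is precisely the translation you carry out. Your bookkeeping is in fact slightly more careful than the paper's, since plugging $n = \hom(P_k(N),E_k)+1$ into Lemma~\ref{stepup} literally yields the bound $2\cdot\hom(P_k(N),E_k)+k-3$ rather than $k-4$, an off-by-one you correctly note is immaterial for the tower-type lower bound in Theorem~\ref{main2}.
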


\noindent \emph{Proof of Lemma \ref{stepalg}}: \textbf{Construction of $P_{k + 1}(2^N)$.}  Given an $\epsilon$-increasing point set $P_k(N) = \{p_1,...,p_N\}$ in $\mathbb{R}^d$, we will construct an $\epsilon_1$-increasing set $P_{k + 1}(2^N) = \{q_1,...,q_{2^N}\}$ of $2^N$ points in $\mathbb{R}^{2d}$ as follows.  For each $p_i \in P_k(N)$, we denote $p_i = (a_{i,1},...,a_{i,d})$.  The construction is done by induction on $N$.  For the base case $N = 1$, $p_1 = (a_{1,1},a_{1,2},...,a_{1,d})$ and $P_{k + 1}(2) = \{q_1,q_2\}$, where

$$q_1= (0,0,....,0) \hspace{.4cm}\textnormal{and}\hspace{.4cm} q_2 = (1,a_{1,1}, 1, a_{1,2},....,1,a_{1,d}).$$

\noindent For $N \geq 2$, set $B_1 = B((0,0,...,0),\epsilon_2)\subset \mathbb{R}^{2d}, B_2 = B((1,a_{N,1}, 1, a_{N,2},....,1,a_{N,d}), \epsilon_2) \subset \mathbb{R}^{2d}$, where $\epsilon_2$ is sufficiently small so that for any two points $q^{\ast}_1 \in B_1, q^{\ast}_2\in B_2$, we have

$$\sigma(q^{\ast}_1,q^{\ast}_2) \in B(p_N, \epsilon/2).$$

\noindent Given the $\epsilon$-increasing point set $P_k(N-1) = \{p_1,...,p_{N-1}\}\subset \mathbb{R}^d$, we inductively construct two small dilated copies of $P_{k + 1}(2^{N-1})$, $Q_1 = \{q_1,...,q_{2^{N-1}}\}$ and $Q_2 = \{q_{2^{N-1} + 1},...,q_{2^N}\}$, and translate them so that they lie inside $B_1$ and $B_2$ respectively.  Hence, the slope of any two points in $Q_i$ is preserved for $i \in \{1,2\}$.  Then $P_{k + 1}(2^N) = Q_1\cup Q_2$.  Since $P_k(N-1)$ is $\epsilon$-increasing, for $r\in \{1,2\}$, we have $Q_r$ is $\epsilon_1$-increasing for some $\epsilon_1> 0$.  Therefore, $P_{k + 1}(2^N)$ is $\epsilon_1$-increasing.

Now we make the following key observation on the point set $P_{k + 1}(2^N)$.

\begin{observation}
\label{key}

If $q_i \prec q_j$, then the point $\sigma(q_i,q_j) \in B(p_r,\epsilon/2)\subset \mathbb{R}^d$, where $r = \delta(i,j)$ and $\delta$ is defined as in Section \ref{hajnal}.
\end{observation}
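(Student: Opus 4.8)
The plan is to prove the claim by induction on $N$, following the recursive construction of $P_{k+1}(2^N)$ from $P_{k+1}(2^{N-1})$. Recall that the points of $P_{k+1}(2^N)$ are indexed by $\{1,\dots,2^N\}$, and that writing $i-1 = \sum_{\ell=0}^{N-1} (i-1)(\ell)2^\ell$, the first block $Q_1 = \{q_1,\dots,q_{2^{N-1}}\}$ consists of the points whose index $i$ has $(i-1)(N-1)=0$, while the second block $Q_2$ consists of those with $(i-1)(N-1)=1$. The base case $N=1$ is immediate: the only pair is $(q_1,q_2)$ with $q_1 \prec q_2$, we have $\delta(1,2)=1$, and by construction $q_1 = (0,\dots,0)$, $q_2 = (1,a_{1,1},\dots,1,a_{1,d})$, so $\sigma(q_1,q_2) = (a_{1,1},\dots,a_{1,d}) = p_1 \in B(p_1,\epsilon/2)$.

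For the inductive step, fix $i < j$ with $q_i \prec q_j$ and set $r = \delta(i,j)$. First I would split on whether $i$ and $j$ lie in the same block. If $i,j$ both lie in $Q_1$ (equivalently both in $Q_2$), then the high-order bit $(\cdot)(N-1)$ agrees, so $r = \delta(i,j) = \delta(i',j') \le N-1$ where $i',j'$ are the corresponding indices within the copy of $P_{k+1}(2^{N-1})$; since $Q_1$ (resp. $Q_2$) is an affine image of $P_{k+1}(2^{N-1})$ under a map that translates and uniformly dilates, and slope is invariant under translation and uniform dilation (this is exactly the reason the construction records that ``the slope of any two points in $Q_i$ is preserved''), we get $\sigma(q_i,q_j) = \sigma$ of the corresponding pair in $P_{k+1}(2^{N-1})$, which by the induction hypothesis lies in $B(p_r,\epsilon/2)$ — here I use that $P_k(N-1)=\{p_1,\dots,p_{N-1}\}$ is the point set feeding the inductive construction, and $r \le N-1$ so $p_r$ is among these. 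The remaining case is $i \in Q_1$, $j \in Q_2$: then the largest bit on which the binary expansions of $i-1$ and $j-1$ differ is bit $N-1$, so $r = \delta(i,j) = N$, and $p_r = p_N$. Here $q_i \in Q_1 \subset B_1 = B((0,\dots,0),\epsilon_2)$ and $q_j \in Q_2 \subset B_2 = B((1,a_{N,1},\dots,1,a_{N,d}),\epsilon_2)$, so by the defining property of $\epsilon_2$ we directly get $\sigma(q_i,q_j) \in B(p_N,\epsilon/2)$, as desired.

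The only point requiring genuine care — and the step I expect to be the main obstacle to writing cleanly rather than to the logic — is the same-block case: one must verify that the recursive ``dilate, translate, place inside $B_1$ or $B_2$'' operation genuinely preserves the slope vector $\sigma$ of every pair, so that the inductive hypothesis transfers verbatim. Since $\sigma(q_1,q_2)$ has $i$th coordinate $(y_i'-y_i)/(x_i'-x_i)$, it is unchanged when both points are translated by a common vector and is unchanged when both points are scaled by a common positive factor $\lambda$ (numerator and denominator each scale by $\lambda$); thus it is invariant under every map used in the construction of $Q_1$ and $Q_2$ from $P_{k+1}(2^{N-1})$. One should also note that $\epsilon_1 \le \epsilon_2$ can be arranged so that $Q_1 \subset B_1$ and $Q_2 \subset B_2$ as claimed, but this is just bookkeeping in the choice of the radii. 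With these observations the induction closes.
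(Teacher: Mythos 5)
Your proof is correct and follows essentially the same route as the paper: induction on $N$, treating separately the cross-block case (where $\delta(i,j)=N$ and the choice of $\epsilon_2$ gives $\sigma(q_i,q_j)\in B(p_N,\epsilon/2)$) and the same-block case (where the slope-preserving dilation/translation lets the induction hypothesis transfer). The extra detail you supply on the base case and on slope invariance under translation and uniform scaling is just a fuller writing-out of what the paper asserts.
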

\begin{proof}
This can be seen by induction on $N$.  For $i < j$, if $q_i \in Q_1, q_j \in Q_2$, then $\sigma(q_i,q_j) \in B(p_N,\epsilon/2)$ and $\delta(i,j) = N$.  If $q_i,q_j \in Q_1$ or $q_i,q_j \in Q_2$, then by the induction hypothesis and since the copies are slope preserving, we have

$$\sigma(q_i,q_j) \in B(p_r,\epsilon/2)$$

\noindent where $p_r \in \{p_1,...,p_{N-1}\}$ is such that $r = \delta(i,j)$.
\end{proof}

\noindent \textbf{Construction of $E^{\ast}_{k + 1}$.}  We define the semi-algebraic set $E^{\ast}_{k + 1} \subset \mathbb{R}^{2d(k + 1)}$ by

$$E^{\ast}_{k +1} = \left\{(x_1,...,x_{k + 1}) \in \mathbb{R}^{2d(k + 1)}: x_i \in \mathbb{R}^{2d},   x_{1} \prec x_{2} \prec \cdots \prec x_{k + 1}, C_1 \vee C_2 \vee C_3   \right\},$$

\noindent where conditions $C_1,C_2,C_3$ are defined below.

$$\begin{array}{cl}
    C_1: & \left[\sigma(x_{1},x_{2}) \succ \sigma(x_{2}, x_{3}) \right] \wedge \left[\sigma(x_{3}, x_{4}) \succ \sigma(x_{2}, x_{3})\right]. \\\\
    C_2: & \left[\sigma(x_{1},x_{2})\prec \sigma(x_{2},x_{3}) \prec \cdots \prec \sigma(x_{k},x_{k + 1}) \right]\wedge \left[(\sigma(x_{1},x_{2}),\sigma(x_{2},x_{3})...,\sigma(x_{k},x_{k+1})) \in E^{\ast}_{k}\right]. \\\\
    C_3: & \left[\sigma(x_{1},x_{2})\succ \sigma(x_{2},x_{3}) \succ \cdots \succ \sigma(x_{k},x_{k + 1}) \right]\wedge \left[(\sigma(x_{k},x_{k + 1}),...,\sigma(x_{2},x_{3}),\sigma(x_{1},x_{2})) \in E^{\ast}_{k}\right].\\\\
  \end{array}$$

\medskip

\noindent  Notice that $E_{k+1}$ is the step-up relation of $E_k$.  Indeed, let $(q_{i_1},...,q_{i_{k + 1}})$ be a $(k+1)$-tuple of points in $P_{k + 1}(2^N)$ such that $i_1 < \cdots < i_{k + 1}$.  If $\sigma(q_{i_1},q_{i_2}) \prec \cdots \prec \sigma(q_{i_k},q_{i_{k+1}})$, then

 $$(q_{i_1},...,q_{i_{k + 1}}) \in E^{\ast}_{k + 1} \Leftrightarrow (\sigma(q_{i_1},q_{i_2}),...,\sigma(q_{i_k},q_{i_{k + 1}})) \in E^{\ast}_k.$$

 \noindent  By Observation \ref{key} and since $(P_k(N),E_k)$ is $\epsilon$-deep, this happens if and only if $(p_{r_1},...,p_{r_k}) \in E_k$, where $r_l = \delta(i_l,i_{l + 1})$ for $1 \leq l \leq k$.  The same is true if  $\sigma(q_{i_1},q_{i_2}) \succ \cdots \succ \sigma(q_{i_k},q_{i_{k+1}})$.  If $\sigma(q_{i_1},q_{i_2}) \succ \sigma(q_{i_2},q_{i_3})\prec \sigma(q_{i_3},q_{i_4})$, then $(q_{i_1},...,q_{i_{k + 1}}) \in E^{\ast}_{k + 1}$ by condition $C_1$ and we have $\delta(i_1,i_2) > \delta(i_2,i_3) < \delta(i_3,i_4)$ by Observation \ref{key}.  Finally, if $(q_{i_1},...,q_{i_{k + 1}}) $ does not satisfy $C_1$, $C_2$, $C_3$, then $(q_{i_1},...,q_{i_{k + 1}}) \not\in E_{k + 1}^{\ast}$.

Although each coordinate of $\sigma$ is a rational function over 4 variables, by clearing denominators in the defining inequalities for $E^{\ast}_{k + 1}$, we get that $E^{\ast}_{k+1}$ is a semi-algebraic set with description complexity at most $c$ where $c = c(k,t)$.

\begin{observation}

$(P_{k + 1}(2^N),E_{k + 1})$ is $\epsilon_1$-deep.

\end{observation}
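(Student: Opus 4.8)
The plan is to choose $\epsilon_1$ small enough that, for every $(k+1)$-tuple $i_1<\cdots<i_{k+1}$, moving each $q_{i_l}$ within a ball of radius $\epsilon_1$ changes the truth value of none of the predicates defining $E_{k+1}^{\ast}$. Fix such a tuple and set $r_l=\delta(i_l,i_{l+1})$ for $1\le l\le k$; since all indices lie in $[2^N]$ we have $r_l\le N$, and by Property A consecutive values satisfy $r_l\ne r_{l+1}$. By Observation \ref{key}, each slope $s_l:=\sigma(q_{i_l},q_{i_{l+1}})$ lies in $B(p_{r_l},\epsilon/2)$. Two robustness facts drive the argument. First, because $P_k(N)$ is $\epsilon$-increasing, whenever $r_l\ne r_{l+1}$ the closed balls $B(p_{r_l},\epsilon)$ and $B(p_{r_{l+1}},\epsilon)$ are coordinatewise strictly separated in the order dictated by whether $r_l<r_{l+1}$ or $r_l>r_{l+1}$, so any point of one is $\prec$ (resp.\ $\succ$) any point of the other. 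Second, because $(P_k(N),E_k)$ is $\epsilon$-deep, for any points $s_l'\in B(p_{r_l},\epsilon)$ with $r_1<\cdots<r_k$ we have $(s_1',\ldots,s_k')\in E_k^{\ast}$ if and only if $(p_{r_1},\ldots,p_{r_k})\in E_k$, and symmetrically in the decreasing case.

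Next I would invoke continuity of $\sigma$. Since $P_{k+1}(2^N)$ is $\epsilon_1$-increasing, perturbing $q_i,q_j$ within $\epsilon_1$-balls keeps $q_i'\prec q_j'$ with all the relevant coordinate differences bounded away from zero, so $\sigma$ is (locally Lipschitz, hence) uniformly continuous on the finitely many relevant regions; after shrinking $\epsilon_1$ — which preserves $\epsilon_1$-increasingness — we may assume $\|\sigma(q_i',q_j')-\sigma(q_i,q_j)\|<\epsilon/2$ for all $i<j$ and all admissible perturbations. Consequently each perturbed slope $s_l'=\sigma(q_{i_l}',q_{i_{l+1}}')$ lies in $B(s_l,\epsilon/2)\subseteq B(p_{r_l},\epsilon)$.

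It then remains to check that membership in $E_{k+1}^{\ast}$ is the same for the original and the perturbed tuple. The ordering predicate $x_1\prec\cdots\prec x_{k+1}$ holds for both since $P_{k+1}(2^N)$ is $\epsilon_1$-increasing. For $C_1\vee C_2\vee C_3$: by the first robustness fact every comparison of $s_l$ with $s_{l+1}$ is governed solely by whether $r_l<r_{l+1}$, and the same holds for $s_l'$ versus $s_{l+1}'$ since these also lie in the respective $\epsilon$-balls; hence $(s_l)$ and $(s_l')$ have the same shape (both monotone increasing, both monotone decreasing, or both with a local minimum or maximum at position $2$), which already settles $C_1$ and the monotonicity halves of $C_2,C_3$. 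In the monotone-increasing case the remaining clause of $C_2$ asks whether $(s_1,\ldots,s_k)\in E_k^{\ast}$, which by the second robustness fact is equivalent to $(p_{r_1},\ldots,p_{r_k})\in E_k$ and hence to $(s_1',\ldots,s_k')\in E_k^{\ast}$; the monotone-decreasing case is identical with $C_3$. Thus $(q_{i_1}',\ldots,q_{i_{k+1}}')\in E_{k+1}^{\ast}$ if and only if $(q_{i_1},\ldots,q_{i_{k+1}})\in E_{k+1}^{\ast}$, which is exactly $\epsilon_1$-deepness of $(P_{k+1}(2^N),E_{k+1})$.

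I expect the only real obstacle to be the bookkeeping of margins: one must check that the $\epsilon/2$-neighborhoods supplied by Observation \ref{key}, together with the $\epsilon$-increasing and $\epsilon$-deep hypotheses on $(P_k(N),E_k)$, leave exactly enough slack to absorb a further $\epsilon/2$ perturbation of each slope. The continuity of $\sigma$ and the finite pigeonholing needed to pick a single working $\epsilon_1$ are routine, and Property A is precisely what keeps consecutive slopes comparable, so that the local-extremum structure used in $C_1$--$C_3$ remains well defined throughout.
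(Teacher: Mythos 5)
Your proposal is correct and follows essentially the same route as the paper's proof: use Observation \ref{key} to place each slope in $B(p_{r_l},\epsilon/2)$, shrink $\epsilon_1$ so that perturbed slopes stay within $B(p_{r_l},\epsilon)$ and the ordering is preserved, and then invoke the $\epsilon$-increasing and $\epsilon$-deep properties of $(P_k(N),E_k)$ to conclude that membership in $E_{k+1}^{\ast}$ is unchanged. You merely spell out the details (slope comparisons governed by the separated balls, the case analysis over $C_1$--$C_3$, and the finite choice of a uniform $\epsilon_1$) that the paper leaves implicit.
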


\begin{proof}

Suppose $(q_{i_1},q_{i_2},...,q_{i_{k + 1}}) \in E_{k + 1}$ with $q_{i_1} \prec\cdots \prec q_{i_{k + 1}}$ and let $q^{\ast}_i \in B(q_i,\epsilon_1)$ for $1\leq i \leq k +1$.  By Observation \ref{key}, $\sigma(q_{i_j},q_{i_{j + 1}}) \in B(p_l,\epsilon/2)$ for some $p_l \in P_k(N)$.  By making $\epsilon_1$ sufficiently small, we have $q^{\ast}_{i_1} \prec\cdots \prec q^{\ast}_{i_{k + 1}}$ and

$$\sigma(q^{\ast}_{i_j},q^{\ast}_{i_{j + 1}}) \in B(p_l,\epsilon).$$

\noindent Since $(P_k(N),E_k)$ is $\epsilon$-deep, $(q^{\ast}_{i_1},...,q^{\ast}_{i_{k + 1}}) \in E^{\ast}_{k + 1}$.  By a similar argument, if $(q_{i_1},...,q_{i_{k + 1}}) \not\in E_{k + 1}$, then $(q^{\ast}_{i_1},....,q^{\ast}_{i_{k+1}})\not\in E^{\ast}_{k + 1}$.
\end{proof}

Notice that our construction of $(P_3(2^n),E_3)$ is $(1/10)$-increasing, $(1/10)$-deep, and $E_3^{\ast}$ has constant description complexity. Applying Lemma \ref{stepalg} and Corollary \ref{smallhom} inductively on $k$ completes the proof of Theorem \ref{main2}.

\subsection{A construction in one dimension}
\label{k4}
A recent result of Bukh and Matou\v{s}ek \cite{bukh} shows that one can not keep stepping up in one dimension to another construction in one dimension.  Their result says that there exists a constant $c = c(k,t)$ such that $R^{1,t}_{k}(n) \leq 2^{2^{cn}}$.  They also showed that their result is tight by giving a matching lower bound in the case $k = 5$.  Here, we give another matching construction for the case $k = 4$.  This is the smallest possible uniformity for such a tight construction as, for $k \leq 3$, Theorem \ref{main} gives a single exponential upper bound.

The idea of the construction is to start with the point set $P_3(2^n)$ and relation $E_3\subset{P_3(2^n)\choose 3}$ from Subsection \ref{k3} and then to apply the Erd\H os-Hajnal stepping-up lemma with a sufficiently large base.  After stepping up, we obtain a point set $P$ with $2^{2^{n}}$ points in $\mathbb{R}$ with relation $E_3$ in the ``exponent".  Since the only operation in $E^{\ast}_3$ is addition, the step-up relation can be defined by multiplication and hence remains semi-algebraic.  We now formalize this idea.

\medskip

\noindent \emph{The construction:}  Let  $P_3(2^n)$ and $E^{\ast}_3$ be as in Section \ref{lower}. Recall that $(P_3(2^n), E_3)$ is $(1/10)$-deep.  We will step up by considering a point set $P$ on $2^{2^n}$ points, where for any $p\in P$ we have $p - 1 = \sum\limits_{i = 0}^{2^n - 1}p(i)b^i$ with $p(i) \in \{0,1\}$. Here $b$ is a sufficiently large constant to be determined later.  For $p>q$, let $\delta(p,q) = \log_b(p-q)$.  We will choose $b$ sufficiently large so that for any $p> q$, if $i$ is the largest integer such that $p(i) \neq q(i)$, then

$$i - \frac{1}{10} < \delta(p,q) < i + \frac{1}{10}.$$

\noindent Hence, $i$ is the closest integer to $\delta$ and this integer satisfies Properties A and B from Section \ref{hajnal}.  Now we define the relation $E\subset {P\choose 4}$ by the semi-algebraic set

$$E^{\ast} = \{(x_1,x_2,x_3,x_4) \in \mathbb{R}^4: x_1 < x_2 < x_3 < x_4,C_1\vee C_2 \vee C_3\},$$

\noindent where conditions $C_1,C_2,C_3$ are defined below.

$$\begin{array}{cl}
    C_1: & \left[\delta(x_1,x_2)  > \delta(x_2,x_3)\right] \wedge \left[\delta(x_3,x_4) > \delta(x_2,x_3)\right]. \\\\
    C_2: & \left[\delta(x_1,x_2) < \delta(x_2,x_3) < \delta(x_3,x_4) \right]\wedge \left[(\delta(x_1,x_2),\delta(x_2,x_3),\delta(x_3,x_4)) \in E^{\ast}_3\right]. \\\\
      C_3: & \left[\delta(x_1,x_2) > \delta(x_2,x_3) > \delta(x_3,x_4) \right]\wedge \left[(\delta(x_3,x_4),\delta(x_2,x_3),\delta(x_1,x_2)) \in E^{\ast}_3\right]. \\\\
  \end{array}$$

\noindent Notice that these conditions can be rewritten as

$$\begin{array}{cl}
    C_1: & \left[x_2 - x_1  > x_3 -x_2 \right] \wedge \left[x_4 - x_3 > x_3 - x_2\right]. \\\\
    C_2: & \left[x_2 - x_1 < x_3 - x_2 < x_4 - x_3 \right]\wedge \left[(x_2-x_1)(x_4-x_3) \geq (x_3 - x_2)^2\right]. \\\\
    C_3: & \left[x_2 - x_1 > x_3 - x_2 > x_4 - x_3 \right]\wedge \left[(x_2-x_1)(x_4-x_3) \geq (x_3 - x_2)^2\right].\\\\
  \end{array}$$

  \noindent Therefore, $E^{\ast}$ is semi-algebraic with constant description complexity. Using the fact that $(P_3,E_3)$ is (1/10)-deep, a similar argument to the previous subsection shows that $E^{\ast}$ is the step-up relation of $E^{\ast}_3$.  Corollary~\ref{smallhom} then implies that $(P,E)$ does not contain a homogeneous subset of size $2n + 1$.

\section{Applications}
\label{applications}

Let us recall that $OT_d(n)$ is the smallest integer $N$ such that any set of $N$ points in general position in $\mathbb{R}^d$ contains $n$ members such that every $(d + 1)$-tuple has the same orientation.  The proof of Theorem \ref{ot} giving an upper bound on $OT_d(n)$ follows quickly from Theorem \ref{main}.

\medskip
\noindent \emph{Proof of Theorem \ref{ot}.}
 Let $P= \{p_1,p_2,...,p_N\}$ be an ordered point set of $N$ points in $\mathbb{R}^d$ such that $P$ is in general position.  Let $E \subset {P\choose d + 1}$ be a relation on $P$ such that $(p_{i_1},...,p_{i_{d  +1}}) \in E$ if $(p_{i_1},...,p_{i_{d  +1}})$ has a positive orientation.  Then

 $$E^{\ast} = \{(x_1,...,x_{d+1}) \in \mathbb{R}^{d(d+1)}: x_i \in \mathbb{R}^d, \det(M(x_1,...,x_{d+1})) > 0\}.$$

 \noindent Thus, $E^{\ast}$ is a semi-algebraic set in $\mathbb{R}^{d(d + 1)}$ with description complexity at most $t= t(d)$.  Hence, the statement follows from Theorem \ref{main}. $\hfill\square$

\medskip

Recall that $OSH_d(n)$ is the smallest integer $N$ such that every set of $N$ hyperplanes in $\mathbb{R}^d$ in general position contains $n$ members that are one-sided, where a set of hyperplanes $H$ is one-sided if the vertex set of the arrangement of $H$ lies completely on one side of the hyperplane $x_d = 0$.  We obtain a stronger bound for Theorem \ref{osh} by deriving a recursive formula, similar to the one in Theorem \ref{semirec}.  Since each hyperplane $h_i\in H$ is specified by the linear equation

$$a_{i,1}x_1 + \cdots + a_{i,d}x_d = b_i,$$

\noindent we can represent $h_i\in H$ by the point $p_i \in\mathbb{R}^{d + 1}$ where $p_i = (a_{i,1},...,a_{i,d},b_i)$ and define a relation $E\subset {P\choose d}$.   However, for sake of clarity, we will simply define $E\subset {H\choose d}$, where $(h_{i_1},...,h_{i_d}) \in E$ if the point $h_{i_1}\cap\cdots\cap h_{i_d}$ lies above the hyperplane $x_d = 0$ (i.e. the $d$-th coordinate of the intersection point is positive).  Clearly, $E$ is a semi-algebraic relation with complexity at most $t = t(d)$.

Since $OSH_2(n) = (n-1)^2 + 1$, Theorem \ref{osh} follows immediately from the next theorem.

\begin{theorem}

For $d\geq 3$, let $M = OSH_{d-1}(n-1)$.  Then $OSH_d(n) \leq  2^{C_2M\log M}$, where $C_2$ depends only on $d$.

\end{theorem}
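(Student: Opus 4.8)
The plan is to mimic the proof of Theorem~\ref{semirec} (the Erd\H os--Rado-type recursion for semi-algebraic relations), but exploit the special linear structure of the one-sided hyperplane relation to run the argument in one fewer dimension. Let $H = \{h_1,\dots,h_N\}$ be a set of $N = 2^{C_2 M\log M}$ hyperplanes in $\mathbb{R}^d$ in general position, with $M = OSH_{d-1}(n-1)$, and represent each $h_i$ by its coefficient point $p_i = (a_{i,1},\dots,a_{i,d},b_i) \in \mathbb{R}^{d+1}$. The relation $E \subset \binom{H}{d}$ records whether the last coordinate of the intersection point $h_{i_1}\cap\cdots\cap h_{i_d}$ is positive; by Cramer's rule this last coordinate is a ratio of two determinants, each polynomial of bounded degree in the $p_i$, so $E$ is semi-algebraic of complexity $t=t(d)$.

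First I would run the greedy sieving exactly as in Theorem~\ref{semirec}: build a sequence $q_1,\dots,q_{M+1}$ of hyperplanes and nested surviving sets $S_r$ so that for every $(d-1)$-subset $T$ of $\{q_1,\dots,q_{r}\}$, the membership of $T\cup\{h\}$ in $E$ is the same for all $h\in S_r$. At stage $r$, after fixing $q_{r+1}$, each $(d-2)$-subset $T'\subset\{q_1,\dots,q_r\}$ together with $q_{r+1}$ gives (via the Cramer determinants, treating the coefficients of the free hyperplane $h$ as the variables $x\in\mathbb{R}^{d+1}$) a bounded number of polynomials; applying the Milnor--Thom theorem partitions $S_r$ into $O(r^{c})$ cells, and we keep the largest. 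Choosing $C_2$ large enough so that $N/(C_3^{M} M^{cM}) \geq 1$, the sequence $q_1,\dots,q_{M+1}$ is well defined. Setting $F=\{q_1,\dots,q_M\}$ and freezing the last hyperplane $q_{M+1}$, we obtain a $(d-1)$-ary semi-algebraic relation $E'$ on $F$ of bounded complexity, where $(q_{i_1},\dots,q_{i_{d-1}})\in E'$ iff $(q_{i_1},\dots,q_{i_{d-1}},q_{M+1})\in E$.

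The key point --- and where this differs from the generic Theorem~\ref{semirec} argument --- is that $E'$ is not just any semi-algebraic relation but is \emph{itself a one-sided hyperplane relation in dimension $d-1$}. Freezing $q_{M+1}$ (i.e.\ intersecting everything with the fixed hyperplane $h_{M+1}$) amounts to restricting the whole configuration to the $(d-1)$-dimensional space $h_{M+1}$; the traces $h_i \cap h_{M+1}$ are hyperplanes in $h_{M+1}\cong\mathbb{R}^{d-1}$ in general position, and the condition that $h_{i_1}\cap\cdots\cap h_{i_{d-1}}\cap h_{M+1}$ lies above $\{x_d=0\}$ is exactly the one-sidedness condition for these traces with respect to the $(d-2)$-flat $\{x_d=0\}\cap h_{M+1}$ inside $h_{M+1}$ (after an affine change of coordinates, this is the hyperplane $x_{d-1}=0$ in $\mathbb{R}^{d-1}$). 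Hence by definition of $M = OSH_{d-1}(n-1)$, inside $F$ there are $n-1$ hyperplanes all of whose $(d-1)$-fold intersections lie on the same side; by the sieving property, adjoining $q_{M+1}$ gives $n$ hyperplanes that are one-sided in $\mathbb{R}^d$, completing the recursion.

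The main obstacle I anticipate is verifying cleanly that ``freeze the last hyperplane'' coincides with ``pass to dimension $d-1$ one-sidedness'' --- one must check that the affine slice $h_{M+1}$ inherits general position, that the sign of the $d$-th coordinate of the full intersection point matches the sign of the relevant coordinate of the sliced intersection point (up to a global sign determined by $q_{M+1}$, which can be absorbed by symmetry since $OSH$ is defined with ``one side'', not a prescribed side), and that the resulting $(d-1)$-dimensional relation is genuinely of the $OSH_{d-1}$ type rather than merely a bounded-complexity semi-algebraic relation. Everything else --- the Milnor--Thom cell count, the choice of $C_2$, the greedy bookkeeping --- is routine and parallels Theorem~\ref{semirec} verbatim. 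The base case $d=2$ is supplied by $OSH_2(n)=(n-1)^2+1$, so iterating the recursion $d-2$ times yields $OSH_d(n) \leq \twr_{d-1}(c n^2\log n)$, which is Theorem~\ref{osh}.
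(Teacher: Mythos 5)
Your proposal is correct and follows essentially the same route as the paper: run the Erd\H os--Rado/Milnor--Thom sieving of Theorem~\ref{semirec} to get $q_1,\dots,q_{M+1}$, then observe that restricting to the slice $q_{M+1}$ turns the frozen relation into the one-sided hyperplane problem for the traces $q_i\cap q_{M+1}$ in $\mathbb{R}^{d-1}$, apply $OSH_{d-1}(n-1)$ there, and use the sieving property to lift the $n-1$ chosen hyperplanes together with $q_{M+1}$ to a one-sided family in $\mathbb{R}^d$. The technical caveats you flag (general position of the traces, matching of signs after the affine identification) are exactly the points the paper also treats as routine.
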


\begin{proof}

Set $N = 2^{C_2M\log M}$, where $C_2$ is a sufficiently large constant that depends only on $d$.  Let $H = \{h_1,...,h_N\}$ and $E\subset{H\choose d}$ be as defined above, and let $h_0$ be the hyperplane $x_d = 0$.  We now follow the proof of Theorem \ref{semirec} to the point where we obtain the sequence $q_1,...,q_{M+1} \in H$ such that every $(d-1)$-tuple $(q_{i_1},...,q_{i_{d-1}}) \subset \{q_1,...,q_{M}\}$ has the property that either $(q_{i_1},...,q_{i_{d-1}}, q_j)  \in E$ for all $i_{d-1} < j \leq M+1$ or $(q_{i_1},...,q_{i_{d-1}}, q_j) \not \in E$ for all $i_{d-1} < j \leq M+1$.

For each hyperplane $q_i \in \{q_1,...,q_{M}\}$, let $q^{\ast}_i = q_i\cap q_{M+1}$ and set $F = \{q^{\ast}_1,...,q^{\ast}_M\} \subset q_{M+1}$.  Hence, $F$ is a family of $M$ $(d-2)$-dimensional hyperplanes in $q_{M+1}$.  Since $q_{M+1}$ is isomorphic to $\mathbb{R}^{d-1}$ and $M = OSH_{d-1}(n-1)$, there exist $n-1$ members $F'=\{q^{\ast}_{i_1},...,q^{\ast}_{i_{n-1}}\} \subset F$ such that the vertex set of the arrangement of $F'$ lies completely on one side of the $(d-2)$-dimensional hyperplane $h_0\cap q_{M + 1}$ in $q_{M+1}$.  Let $H'\subset H$ be the hyperplanes corresponding to $F'$ in $\mathbb{R}^d$. By the construction of $H'\subset \{q_1,...,q_{M}\}$, the vertex set of the arrangement of $H' \cup \{q_{M+1}\}$ lies on one side of the hyperplane $h_0$.
\end{proof}

\section{The off-diagonal case}\label{offdiag}

In this section we prove Theorem \ref{off}, giving an upper bound on $R^{1,t}_{3}(s,n)$,  and Theorem \ref{david}.
We first list several results that we will use.

\begin{lemma}[Erd\H os-Szekeres \cite{erdos}]
\label{monotone}
Given a sequence of $N = (n-1)^2 + 1$ distinct real numbers $p_1,p_2,...,p_N$, there exists a subsequence $p_{i_1},p_{i_2},...,p_{i_n}$ of length $n$ such that either $p_{i_1} < p_{i_2} < \cdots < p_{i_n}$ or $p_{i_1} > p_{i_2} > \cdots > p_{i_n}$.

\end{lemma}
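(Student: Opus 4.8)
The statement to prove is the Erd\H{o}s--Szekeres monotone subsequence lemma: any sequence of $N=(n-1)^2+1$ distinct reals contains a monotone subsequence of length $n$. This is completely classical, so let me sketch the standard pigeonhole proof.

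\textbf{Proof plan.} The plan is to run the classical pigeonhole argument of Erd\H{o}s and Szekeres. To each index $i \in \{1,\dots,N\}$ I would assign a pair of labels $(a_i, b_i)$, where $a_i$ is the length of the longest strictly increasing subsequence of $p_1,\dots,p_N$ that ends at $p_i$, and $b_i$ is the length of the longest strictly decreasing subsequence ending at $p_i$. The key observation is that this labelling is injective: if $i < j$, then either $p_i < p_j$, in which case appending $p_j$ to an increasing subsequence ending at $p_i$ shows $a_j \geq a_i + 1$, so $a_i \neq a_j$; or $p_i > p_j$ (the numbers are distinct, so these are the only cases), in which case $b_j \geq b_i + 1$, so $b_i \neq b_j$. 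In either case $(a_i,b_i) \neq (a_j,b_j)$.

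\textbf{Finishing by counting.} Suppose, for contradiction, that there is no monotone subsequence of length $n$. Then every $a_i$ and every $b_i$ lies in $\{1,2,\dots,n-1\}$, so there are at most $(n-1)^2$ possible label pairs $(a_i,b_i)$. But there are $N = (n-1)^2 + 1$ indices, and the labelling is injective, so by the pigeonhole principle two indices receive the same pair --- contradicting injectivity. Hence some $a_i \geq n$ or some $b_i \geq n$, which by definition yields an increasing or decreasing subsequence of length $n$.

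\textbf{Main obstacle.} There is essentially no obstacle here: the argument is short and self-contained, and the only point requiring (mild) care is the verification that distinctness of the $p_i$ is exactly what forces the dichotomy $p_i < p_j$ or $p_i > p_j$ used in the injectivity step, and that strict monotonicity in the conclusion matches the strict inequalities used to define $a_i, b_i$. I would present the injectivity claim as the one displayed lemma-internal step and then invoke pigeonhole directly.
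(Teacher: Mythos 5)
Your proof is correct: the injective labelling $(a_i,b_i)$ by longest increasing/decreasing subsequence lengths ending at $p_i$, combined with the pigeonhole principle, is the standard argument for this classical lemma. The paper itself gives no proof, citing Erd\H{o}s--Szekeres directly, so your self-contained argument is exactly what is needed and contains no gaps.
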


The next lemma is a combinatorial reformulation of another classical theorem due to Erd\H os and Szekeres \cite{erdos}.  A transitive 2-coloring of the triples of $[N]$ is a 2-coloring, say with colors red and blue, such that, for $i_1 < i_2 < i_3 < i_4$, if triples $(i_1,i_2,i_3)$ and $(i_2,i_3, i_4)$ are red (blue), then $(i_1,i_2,i_4)$ and $(i_1,i_3,i_4)$ are also red (blue).

\begin{lemma}[Fox et al.~\cite{fox}]
\label{fox}
Let $N_3(s,n)$ denote the minimum integer $N$ such that, for every transitive 2-coloring on the triples of $[N]$, there exists a red clique of size $s$ or a blue clique of size $n$.  Then

$$N_3(s,n)  = {s + n - 4\choose s-2} + 1.$$

\end{lemma}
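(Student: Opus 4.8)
The plan is to prove both inequalities ${s+n-4 \choose s-2}+1\le N_3(s,n)$ and $N_3(s,n)\le {s+n-4 \choose s-2}+1$ by transcribing the Erd\H os--Szekeres cup/cap argument into the purely combinatorial language of transitive colourings, using nothing but the transitivity axiom. Call a set \emph{red} (resp. \emph{blue}) if all of its triples are red (resp. blue). The one structural fact I would isolate first is an \emph{extension lemma}: if $a_1<\cdots<a_m$ is a red set and $b>a_m$, then $\{a_1,\ldots,a_m,b\}$ is red if and only if $(a_{m-1},a_m,b)$ is red; symmetrically, if $a_1<\cdots<a_m$ is blue and $c<a_1$, then $\{c,a_1,\ldots,a_m\}$ is blue if and only if $(c,a_1,a_2)$ is blue. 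Each direction follows from a short double induction that uses only the defining implications of a transitive colouring to force the colour of every new triple containing the added vertex; a consequence is that a set is red iff all of its consecutive triples are red, and likewise for blue.

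For the upper bound I would establish the recursion $N_3(s,n)\le N_3(s-1,n)+N_3(s,n-1)-1$ for $s,n\ge 4$, together with the base cases $N_3(3,n)=n$ and $N_3(s,3)=s$ (immediate: with no red triple the entire ground set is one blue set, and dually). Take $N=N_3(s-1,n)+N_3(s,n-1)-1$, a transitive colouring of the triples of $[N]$, and suppose there is no blue $n$-set. Let $X\subseteq[N]$ be the set of elements occurring as the largest element of some red $(s-1)$-set. Transitivity is hereditary, so the colouring stays transitive on $X$ and on $[N]\setminus X$. If $|[N]\setminus X|\ge N_3(s-1,n)$, then $[N]\setminus X$ contains a red $(s-1)$-set, whose top element would lie in $X$, a contradiction (or a blue $n$-set, also a contradiction); hence $|X|\ge N_3(s,n-1)$. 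Then $X$ contains a red $s$-set, in which case we are done, or a blue $(n-1)$-set $\{x_1<\cdots<x_{n-1}\}$; in the second case choose a red $(s-1)$-set with top element $x_1$, let $y$ be its second-largest element, and look at $(y,x_1,x_2)$: if it is red, the extension lemma appends $x_2$ to give a red $s$-set; if it is blue, it prepends $y$ to give a blue $n$-set, contradicting our assumption. So a red $s$-set always appears. Feeding the recursion and the base cases into an induction on $s+n$, together with Pascal's identity ${s+n-4 \choose s-2}={s+n-5 \choose s-3}+{s+n-5 \choose s-2}$, yields the upper bound.

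For the lower bound I would use a product construction mirroring the extremal cup/cap configuration. On $[\,{s+n-4 \choose s-2}\,]$ let $A$ be an initial block of size ${s+n-5 \choose s-3}$ carrying, by induction on $s+n$, an extremal transitive colouring for the pair $(s-1,n)$ --- no red $(s-1)$-set and no blue $n$-set --- and let $B$ be the final block of size ${s+n-5 \choose s-2}$ carrying an extremal colouring for $(s,n-1)$; colour every triple with two elements in $A$ and one in $B$ red, every triple with one element in $A$ and two in $B$ blue, and keep the given colours on triples inside $A$ or inside $B$. Transitivity of the combined colouring is verified by splitting a $4$-tuple $i_1<i_2<i_3<i_4$ according to how many of its members lie in $A$: for the split $AABB$ the consecutive triples $(i_1,i_2,i_3)$ and $(i_2,i_3,i_4)$ receive opposite colours, so the implication is vacuous; for $AAAB$ and $ABBB$ every cross triple has the same forced colour, so the implication holds; and $AAAA$, $BBBB$ reduce to transitivity of the colourings on $A$ and $B$. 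For the extremal property: a red set meeting $B$ in at least two points would contain a triple with one element in $A$ and two in $B$, which is blue by construction, so any red set lies inside $B$ (size $\le s-1$) or meets $B$ in at most one point, in which case its trace on $A$ is a red set there (size $\le s-2$), giving total size $\le s-1<s$; the argument for blue sets is dual. The base cases $(s,3)$ and $(3,n)$ are the all-red colouring on $[s-1]$ and the all-blue colouring on $[n-1]$. Induction on $s+n$ then gives $N_3(s,n)>{s+n-4 \choose s-2}$, matching the upper bound, so equality holds.

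The parts that are routine but genuinely fiddly are the double induction in the extension lemma and the case analysis checking transitivity of the product colouring. The one place where real care is needed is the cross-colouring in the construction: it must be arranged so that pushing either a red set or a blue set across the $A$/$B$ boundary costs exactly the extra vertex it would gain, and swapping which of the two cross types is red destroys this balance. Everything else is bookkeeping, and the whole argument is a faithful translation of the Erd\H os--Szekeres cup/cap proof into the language of transitive colourings.
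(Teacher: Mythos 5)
Your proof is correct. Note that the paper does not actually prove Lemma \ref{fox}; it quotes the result from Fox, Pach, Sudakov and Suk \cite{fox}, so there is no in-paper argument to compare against, but your argument is a sound, self-contained substitute for the citation. Your extension lemma (a set is monochromatic as soon as its consecutive triples are, and one can append or prepend a vertex after checking a single triple) is precisely the observation that, for transitive colourings, monochromatic tight monotone paths span monochromatic cliques; in \cite{fox} this serves to reduce the problem to the Ramsey number for monotone paths, which is then determined by an Erd\H{o}s--Szekeres-type argument. You instead run the cup/cap scheme directly in the clique language: the recursion $N_3(s,n)\le N_3(s-1,n)+N_3(s,n-1)-1$ via the set $X$ of top vertices of red $(s-1)$-sets (the $(y,x_1,x_2)$ dichotomy correctly invokes your extension lemma on both sides), and a two-block product construction for the lower bound whose transitivity and extremality checks are all valid --- your remark that the orientation of the cross colours (two-in-$A$/one-in-$B$ red, one-in-$A$/two-in-$B$ blue) is the one delicate choice is accurate, since swapping it would allow a red $s$-set consisting of one point of $A$ together with a red $(s-1)$-set of $B$. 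Both routes give the same binomial; the path formulation of \cite{fox} buys extra generality (longer paths, more colours, higher uniformity), while yours stays entirely within the statement of the lemma.
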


\noindent The following lemma is the $k = 2$ case of Theorem \ref{main}, first proved by Alon, Pach, Pinchasi, Radoi\v ci\'c, and Sharir.

\begin{lemma}[Alon et al.~\cite{noga}]
\label{alon}
Let $P$ be a sequence of $N$ points in $\mathbb{R}^d$ and let $E\subset {P\choose 2}$ be a semi-algebraic relation on $P$ with description complexity at most $t$.  Then there exists a subset $P'\subset P$ with at least $N^{\alpha}$ elements such that either every pair of distinct elements of $P'$ belong to $E$ or no such pair belongs to $E$, where $\alpha > 0$ depends only on $t$ and $d$.
\end{lemma}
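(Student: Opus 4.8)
The plan is to deduce the statement from a purely combinatorial Erd\H os--Hajnal-type recursion in which every step is powered by the Milnor--Thom theorem, after first recording a simple structural fact about semi-algebraic graphs.

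\emph{Reductions and the key observation.} First I would replace each point $p_i\in\RR^d$ by $(p_i,i)\in\RR^{d+1}$ and enlarge the relation to the symmetric relation $\widetilde E^{\ast}=\{((x,s),(y,u)):s<u,\ (x,y)\in E^{\ast}\}\cup\{((x,s),(y,u)):s>u,\ (y,x)\in E^{\ast}\}$, which is again semi-algebraic of complexity bounded in terms of $t$ and has exactly the same homogeneous subsets; so we may assume $E$ is symmetric. For $q\in P$ set $N_q=\{x\in\RR^d:(q,x)\in E^{\ast}\}$, a semi-algebraic set whose boundary lies in the zero set of at most $t$ polynomials of degree at most $t$. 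The basic observation is: if $\Delta\subseteq\RR^d$ is a region and $Q\subseteq\Delta$ is a point set such that the boundary of $N_q$ misses $\Delta$ for every $q\in Q$, then $Q$ is homogeneous. Indeed then $\Delta\subseteq N_q$ or $\Delta\cap N_q=\emptyset$ for each such $q$, so $q$ is $E$-related to all of $Q\setminus\{q\}$ or to none of it; symmetry of $E$ rules out a vertex related to everything coexisting with a vertex related to nothing, so $Q$ is a clique or an independent set. More generally, the ``reduced'' relation induced on the cells of an arrangement of the boundary surfaces of the $N_q$ is again a two-colouring (a pair of cells is ``all $E$'' or ``no $E$'' for those points whose surfaces miss both cells).

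\emph{Base case $d=1$.} Order $P$ as $p_1<\cdots<p_N$ on the line. By repeated application of the Erd\H os--Szekeres lemma (Lemma~\ref{monotone}) to the sequences of values of the defining polynomials along $P$, pass to a subsequence of size $N^{\Omega(1)}$ (the exponent depending only on $t$) on which every sign condition $f_\ell(p_i,p_j)>0$ with $i<j$ is monotone in each variable; on the remaining points each $N_q$ is a union of at most $t$ initial and final segments of the order, so the graph is of bounded complexity over an interval structure and a homogeneous set of size linear in the remaining point set is extracted by pigeonhole. This gives $\hom(P,E)\ge N^{\alpha_1}$ in dimension one.

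\emph{Inductive engine.} For $d\ge 2$, fix a large constant $r=r(d,t)$. Using the Milnor--Thom theorem (equivalently, a $(1/r)$-cutting for the $N$ boundary surfaces), decompose $\RR^d$ into $K=K(d,t,r)$ cells, each crossed by the boundary of $N_q$ for at most $N/r$ indices $q$. Run an Erd\H os--Hajnal-type recursion on the point set: inside each cell meeting $P$, apply the observation above together with the recursion for strictly smaller point sets (on the points whose surfaces \emph{do} cross that cell) to produce a homogeneous subset of a definite type, and handle the reduced two-colouring on the constantly many cells by the ordinary Ramsey theorem; splicing a monochromatic set of cells in the reduced colouring together with same-type homogeneous subsets inside those cells yields a homogeneous set of size $N^{\alpha_d}$ for a suitable $\alpha_d=\alpha_d(d,t)>0$.

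\emph{Main obstacle.} The delicate point is the quantitative bookkeeping of the last recursion. A partition of $\RR^d$ into $K$ cells only forces some cell to contain $\gtrsim N/K$ points while permitting $\gtrsim N/K^{1/d}\gg N/K$ of the surfaces to cross that cell, so one cannot simply isolate one ``clean'' cell and stop; one must instead recurse on the points whose surfaces cross a heavy cell, using that their number --- hence the effective complexity of the subproblem --- shrinks by a factor $r$ at each level, while also reconciling the clique/independent-set dichotomy across the cells of each reduced colouring (so that homogeneous pieces can actually be glued into one larger homogeneous set). Arranging the parameters so that these multiplicative losses aggregate to a true power $N^{\alpha}$ rather than a merely polylogarithmic bound is precisely the technical heart of the argument of Alon, Pach, Pinchasi, Radoi\v ci\'c, and Sharir.
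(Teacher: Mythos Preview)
The paper does not prove Lemma~\ref{alon}; it is quoted from \cite{noga}, so there is no in-paper argument to compare against. On its own merits your sketch assembles the right ingredients --- the symmetrisation, Milnor--Thom/cuttings, and the correct observation that if $\partial N_q$ misses $\Delta$ for every $q\in Q\subset\Delta$ then $Q$ is homogeneous --- but it stops well short of a proof. Two concrete gaps. First, the ``reduced two-colouring on the constantly many cells'' is not well-defined: for $q_1\in\Delta_1$ and $q_2\in\Delta_2$ whether $(q_1,q_2)\in E$ is \emph{not} determined by the pair $(\Delta_1,\Delta_2)$, since $\partial N_{q_1}$ may well cross $\Delta_2$ even though it misses $\Delta_1$; so there is no colouring of pairs of cells to hand to Ramsey's theorem, and hence no mechanism for ``splicing'' the homogeneous pieces from different cells into one homogeneous set. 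Second, you yourself concede that the quantitative recursion --- the only substantial step --- is ``precisely the technical heart'' of \cite{noga} and do not carry it out; a sketch that names the obstacle without resolving it is not yet a proof. (Your $d=1$ base case is also unconvincing: iterated Erd\H{o}s--Szekeres on the values of the $f_\ell$ does not by itself force each $N_q$ to become a bounded union of initial and final segments of the order, and no linear-size homogeneous set is actually extracted.)

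For orientation, the argument in \cite{noga} is organised differently and sidesteps your obstacle. One first proves a \emph{bipartite} density statement --- if $|E|\ge\epsilon\binom{N}{2}$ then there exist disjoint $A,B\subset P$ with $|A|,|B|\ge\delta N$ and $A\times B\subset E$ --- via a single cutting; the bookkeeping is tractable here because one only needs \emph{one} good pair of cells rather than simultaneous control over all cells at once. Applied to $E$ or to its complement this gives, hereditarily, two linear-size parts that are completely joined or completely non-joined, and a short purely combinatorial recursion then converts this strong Erd\H{o}s--Hajnal property into a homogeneous set of size $N^{\alpha}$ (one clean way: iterate the split to build a binary tree and note that the graph on the leaves, with adjacency read off the label at the lowest common ancestor, is a cograph and hence perfect). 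If you want to salvage your outline, redirect the cutting toward that bipartite lemma and then invoke the combinatorial step; the direct cell-recursion you propose does not close as written.
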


\noindent The following result, due to Fox, Gromov, Lafforgue, Naor, and Pach, tells us that if many triples of a point set $P$ satisfy a semi-algebraic relation $E$ then there is a large tripartite $3$-uniform hypergraph all of whose edges are in $E$. 

\begin{lemma}[Fox et al.~\cite{gromov}]
\label{gromov}
Let $P$ be a sequence of $N$ points in $\mathbb{R}^d$ and let $E\subset {P\choose 3}$ be a semi-algebraic relation on $P$ with description complexity at most $t$.   If $|E| \geq \epsilon {N\choose 3}$, then there exist disjoint subsets $P_1,P_2,P_3 \subset P$ such that $|P_i| \geq \epsilon^{c_3}N$ and, for all $p_1 \in P_1$, $p_2\in P_2$, and $p_3 \in P_3$, $(p_1,p_2,p_3) \in E$, where $c_3$ depends only on $t$ and $d$.
\end{lemma}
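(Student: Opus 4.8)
The plan is to deduce the lemma from Lemma~\ref{alon} (the $k=2$ case) together with the Milnor--Thom bound on the number of sign patterns, by an induction on the uniformity. First I would make the relation genuinely tripartite. Split $P$ into $m$ consecutive index-blocks, with $m$ a sufficiently large multiple of $1/\epsilon$; then at most an $\epsilon/2$ fraction of all $\binom{N}{3}$ triples of $P$ meet some block in two or more points, so, since $|E|\ge\epsilon\binom{N}{3}$, at least an $\epsilon/2$ fraction of the edges of $E$ have their three vertices in three distinct blocks, and by averaging some block-triple $B_a,B_b,B_c$ with $a<b<c$ has at least an $\epsilon/3$-fraction of the transversals through $(B_a,B_b,B_c)$ lying in $E$. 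Renaming $X_1=B_a,X_2=B_b,X_3=B_c$, the index-ordering required in the definition of $E$ is now automatic, every transversal $(x_1,x_2,x_3)\in X_1\times X_2\times X_3$ being in increasing index order. So it suffices to prove the following positive-fraction statement: for a bounded-complexity semi-algebraic $E^{\ast}\subseteq\mathbb{R}^{kd}$ and disjoint finite sets $X_1,\dots,X_k\subset\mathbb{R}^d$ with at least an $\epsilon$-fraction of their transversals in $E^{\ast}$, there are $P_i\subseteq X_i$ with $|P_i|\ge\epsilon^{O(1)}|X_i|$ and $P_1\times\cdots\times P_k\subseteq E^{\ast}$.

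The case $k=1$ is trivial ($P_1=X_1\cap E^{\ast}$), and $k=2$ is the positive-fraction strengthening of Lemma~\ref{alon}: a dense bounded-complexity semi-algebraic bipartite relation contains a complete bipartite subrelation with both sides of linear size. I would obtain this from Lemma~\ref{alon} together with Milnor--Thom in the usual way: the two inputs give a regularity decomposition of the relation into $\mathrm{poly}(1/\epsilon)$ pieces all of whose pairs are homogeneous, density $\ge\epsilon$ then exhibits a nearly complete pair, and one trims it to an honestly complete bipartite subrelation by deleting the few high-degree vertices and using that the surviving ``non-edges'' form a sparse bounded-complexity semi-algebraic relation, to which Lemma~\ref{alon} applies again.

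The inductive step from $k-1$ to $k$ is the heart of the matter, and also where I expect the main obstacle to lie. Freezing the last coordinate, each $z\in X_k$ gives a slice $E^{\ast}_z=\{u:(u,z)\in E^{\ast}\}$, a bounded-complexity $(k-1)$-ary semi-algebraic relation, and averaging shows that a positive fraction of the $z$ are ``heavy'', i.e.\ have at least an $\epsilon/2$-fraction of their transversals in them. One would like to apply the inductive hypothesis to a heavy slice, pigeonhole so that a single $z^{\ast}$ works for a positive fraction of the transversals of the resulting $(k-1)$-partite box, and shrink once more; but this only delivers $P_1,\dots,P_{k-1}$ of linear size together with $P_k=\{z^{\ast}\}$, a single point. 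The real content is producing all $k$ parts of linear size simultaneously: the set of admissible choices for the last part is $\bigcap_{u\in P_1\times\cdots\times P_{k-1}}\{x:(u,x)\in E^{\ast}\}$, an intersection of far too many slices with no complexity bound, so neither the inductive hypothesis nor Lemma~\ref{alon} applies to it directly, and a crude sign-pattern pigeonhole (as in the proof of Theorem~\ref{semirec}) would cost a polynomial rather than a constant factor. The resolution is to keep every auxiliary relation semi-algebraic of complexity bounded in terms of $d,t,k$ throughout, at the price of only a bounded number of rounds: one works with slices of $E^{\ast}$ through a bounded-size $\epsilon$-net of the relevant dual range space (such a net exists because that range space has bounded VC dimension, again via Milnor--Thom), which both certifies heaviness and preserves bounded complexity, and then alternately applies the bipartite positive-fraction statement to grow the first $k-1$ parts and to grow the last. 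Verifying that this iteration closes --- complexity staying bounded and densities degrading by only constant factors at each of the boundedly many steps --- is the one genuinely delicate point; the tripartite reduction and the bipartite base case are routine.
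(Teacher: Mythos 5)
You are attempting to reprove a result that this paper does not prove at all: Lemma~\ref{gromov} is imported as a black box from Fox, Gromov, Lafforgue, Naor, and Pach \cite{gromov}, so there is no internal argument to compare against; judged on its own terms, your proposal has a genuine gap, and it sits exactly where you yourself flag it. In the inductive step you correctly observe that slicing off the last coordinate only produces $P_1,\dots,P_{k-1}$ of linear size together with a single point $z^{\ast}$, and that the set of admissible last points, $\bigcap_{u\in P_1\times\cdots\times P_{k-1}}\{x:(u,x)\in E^{\ast}\}$, has no bounded description complexity. But the proposed repair via an $\epsilon$-net of the dual range space is not an argument: completeness of $z$ against a bounded-size net of representative $(k-1)$-tuples does not imply completeness against all of $P_1\times\cdots\times P_{k-1}$ (membership of a transversal in $E^{\ast}$ is not determined by net membership), and once you reinstate the full completeness condition you are back to an unbounded-complexity set to which neither Lemma~\ref{alon} nor your bipartite statement applies. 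The way \cite{gromov} (and later work of Fox, Pach, and Suk) actually closes this step is to avoid slicing altogether: regard the $(k-1)$-tuples of $P$ as single points in $\mathbb{R}^{(k-1)d}$, so that $E$ becomes a bipartite semi-algebraic relation of bounded complexity between the set of $(k-1)$-tuples and $P$; the bipartite positive-fraction theorem then gives a family $A$ of at least $\epsilon^{c}N^{k-1}$ tuples and a set $B\subset P$ of at least $\epsilon^{c}N$ points with every tuple of $A$ completely joined to $B$, and applying induction to the dense $(k-1)$-uniform hypergraph $A$ yields $P_1,\dots,P_{k-1}$, with $P_k=B$. Your proposal contains no mechanism that achieves this, and you explicitly leave the "iteration closes" claim unverified.

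There is also a secondary gap in your base case. The positive-fraction bipartite statement does not follow from Lemma~\ref{alon} as quoted: that lemma produces a single homogeneous subset of polynomial size $N^{\alpha}$, whereas the regularity-type decomposition into $\mathrm{poly}(1/\epsilon)$ parts with almost all pairs homogeneous requires the bipartite strong form of the theorem of \cite{noga} (two disjoint subsets of size $\delta N$ that are completely joined or completely non-joined), and that form is proved by a cutting/space-partition argument rather than by Milnor--Thom sign-pattern counting alone. So both the base case and the inductive step rest on inputs your sketch has not established.
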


\noindent The following lemma of Spencer is now an exercise in \emph{The Probabilistic Method} (see \cite{alonspencer}).
\begin{lemma}[Spencer \cite{spencer}]
\label{spencer}
Let $H = (V,E)$ be a 3-uniform hypergraph on $N$ vertices.  If $E(H) \geq N/3$, then there exists a subset $S \subset V(H)$ such that $S$ is an independent set and

$$|S| \geq \frac{2N}{3}\left(\frac{N}{3|E(H)|}\right)^{1/2}.$$

\end{lemma}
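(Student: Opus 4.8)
The plan is to use the deletion method from the probabilistic method. First I would form a random subset $T \subseteq V(H)$ by including each vertex of $H$ independently with probability $p$, where $p \in (0,1]$ is a parameter to be chosen at the end. Let $X = |T|$ and let $Y$ be the number of hyperedges of $H$ whose three vertices all lie in $T$. By linearity of expectation, $\EE[X] = pN$, and, since a fixed hyperedge is contained in $T$ exactly when each of its three vertices is chosen, $\EE[Y] = p^3 |E(H)|$.

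Next I would turn $T$ into an independent set by deleting, from each hyperedge contained in $T$, one of its vertices. The remaining set $S$ is independent, and since we delete at most one vertex per offending hyperedge we have $|S| \ge X - Y$. Hence $\EE[|S|] \ge \EE[X] - \EE[Y] = pN - p^3 |E(H)|$, so there exists a choice of $T$, and therefore an independent set $S$, with $|S| \ge pN - p^3 |E(H)|$.

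Finally I would optimize over $p$. The function $pN - p^3|E(H)|$ has derivative $N - 3p^2 |E(H)|$, which vanishes at $p = \bigl(N/(3|E(H)|)\bigr)^{1/2}$; the hypothesis $|E(H)| \ge N/3$ guarantees $p \le 1$, so this is a legitimate probability. Substituting and using $p^3|E(H)| = p \cdot p^2 |E(H)| = p\cdot(N/3) = pN/3$ gives
$$|S| \ \ge\ pN - \tfrac{1}{3}pN \ =\ \tfrac{2}{3}pN \ =\ \frac{2N}{3}\left(\frac{N}{3|E(H)|}\right)^{1/2},$$
which is the claimed bound.

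The argument is essentially routine, and there is no genuine obstacle; the only point that needs attention is checking that the optimal value of $p$ does not exceed $1$, which is precisely where the assumption $|E(H)| \ge N/3$ is used. (Without it, the unconstrained optimizer would be an invalid probability, and one would instead take $p = 1$ and delete vertices directly, yielding the weaker bound $|S| \ge N - |E(H)|$.)
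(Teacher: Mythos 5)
Your proof is correct, and the paper itself gives no proof of this lemma --- it simply cites Spencer and notes the statement is an exercise in \emph{The Probabilistic Method}. Your deletion-method argument (random subset with probability $p$, delete one vertex per surviving edge, optimize $p = \bigl(N/(3|E(H)|)\bigr)^{1/2}$, with the hypothesis $|E(H)| \geq N/3$ ensuring $p \leq 1$) is exactly the standard proof of that exercise, so there is nothing to flag.
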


\noindent The last lemma on our list is an old theorem due to Sturm (see \cite{basu}). Let $g(x)$ be a polynomial in $x$ with real coefficients. We say that the 
sequence of polynomials $g_0(x),g_1(x),...,g_t(x), g_{t+1}(x)=0$ is a \emph{Sturm sequence} for $g(x)$ if

$$g_0(x) = g(x), g_1(x) = g'(x), \hspace{.2cm}\textnormal{and}\hspace{.2cm}g_i(x)  = -\rem(g_{i-2},g_{i-1}) \hspace{.5cm}\textnormal{for}\hspace{.5cm}i \geq 2,$$

\noindent where $\rem(g_{i-2},g_{i-1})$ denotes the remainder of $g_{i-2}(x)/g_{i-1}(x)$.  Since the degrees of polynomials $g_i$ are strictly decreasing, 
there is always an index $t$ such that $g_{t+1}(x)=0$ and hence $g_{t-1}(x)  = q_{t}(x)g_t(x)$ for some non-zero 
polynomial $q_t(x)$.

\begin{lemma}[Sturm]
\label{sturm}
Let $g(x)$ be a polynomial in $x$ with real coefficients and  let $g_0(x),\ldots,g_t(x),$ $g_{t+1}(x)=0$ be the Sturm sequence for $g(x)$.  Suppose $g(a),g(b) 
\neq 0$ for $a < b$.  Then the number of distinct real roots of $g(x)$ in the open interval $(a,b)$ is $\sigma(a)-\sigma(b)$, where $\sigma(\xi)$ denotes the 
number of sign changes (ignoring zeros) in the sequence $g_0(\xi),g_1(\xi),....,g_t(\xi).$

\end{lemma}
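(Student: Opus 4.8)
The plan is to prove the classical Sturm theorem by the standard ``the sign-variation count decreases by one per root'' argument, preceded by one reduction that makes the statement work verbatim even when $g$ is not squarefree. Throughout write $\sigma(\xi)$ for the number of sign changes (ignoring zeros) in $g_0(\xi),\dots,g_t(\xi)$, now regarded as a function of arbitrary real $\xi$. The first step is to observe that the recursion $g_i=-\rem(g_{i-2},g_{i-1})$ is just the Euclidean algorithm applied to $g_0=g,\ g_1=g'$ (the extra sign is irrelevant to divisibility), so $g_t$ is a nonzero constant multiple of $\gcd(g,g')$; in particular $g_t$ divides every $g_j$. Set $\bar g_i:=g_i/g_t$. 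Then each $\bar g_i$ is a polynomial, $\bar g_t$ is a nonzero constant, the recursion $\bar g_{i-1}=q_i\bar g_i-\bar g_{i+1}$ (with the same quotients $q_i$) is inherited for $1\le i\le t-1$, and $\bar g_0=g/\gcd(g,g')$ is the squarefree part of $g$: it has exactly the same distinct real roots as $g$, all of them simple, and $\bar g_1/\bar g_0=g'/g$. Since $\gcd(g,g')$ vanishes only at multiple roots of $g$, we have $g_t(a),g_t(b)\neq 0$; and multiplying a finite real sequence by a fixed nonzero number does not change its number of sign changes, so $\sigma(a),\sigma(b)$ equal the corresponding sign-variation counts $\bar\sigma(a),\bar\sigma(b)$ of $(\bar g_i)$. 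It therefore suffices to prove the identity for $(\bar g_i)$.

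The second step is to establish two local facts about $(\bar g_i)$. \textbf{(i)} No two consecutive $\bar g_i,\bar g_{i+1}$ have a common real root: if $\bar g_j(\xi_0)=\bar g_{j+1}(\xi_0)=0$, then rearranging the recursion as $\bar g_{j+2}=q_{j+1}\bar g_{j+1}-\bar g_j$ forces $\bar g_{j+2}(\xi_0)=0$, and by induction $\bar g_t(\xi_0)=0$, contradicting that $\bar g_t$ is a nonzero constant. Consequently, if $\bar g_i(\xi_0)=0$ for an interior index $1\le i\le t-1$, then $\bar g_{i-1}(\xi_0),\bar g_{i+1}(\xi_0)\neq 0$ and, evaluating $\bar g_{i-1}=q_i\bar g_i-\bar g_{i+1}$ at $\xi_0$, these two have strictly opposite signs; hence in a small punctured neighbourhood of $\xi_0$, for any sign of $\bar g_i$, the two adjacencies $(i-1,i)$ and $(i,i+1)$ together contribute exactly one disagreement on each side of $\xi_0$. \textbf{(ii)} If $\bar g_0(\xi_0)=0$ (equivalently $g(\xi_0)=0$), then $\xi_0$ is a simple root of $\bar g_0$, $\bar g_1(\xi_0)\neq 0$ by (i), and $\bar g_1/\bar g_0=g'/g$ has a simple pole at $\xi_0$ with positive residue; so this ratio tends to $-\infty$ as $\xi\uparrow\xi_0$ and to $+\infty$ as $\xi\downarrow\xi_0$, meaning the pair $\bar g_0,\bar g_1$ contributes one sign change just to the left of $\xi_0$ and none just to the right.

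The final step assembles these. Between consecutive real roots of $\bar g_0\cdots\bar g_{t-1}$ all signs are constant, so $\bar\sigma$ is locally constant there, and $\bar\sigma(a)-\bar\sigma(b)$ equals the sum, over the finitely many $\xi_0\in(a,b)$ that are roots of some $\bar g_i$ with $i\le t-1$, of the jump $\bar\sigma(\xi_0^-)-\bar\sigma(\xi_0^+)$. Fix such a $\xi_0$; the set $Z$ of vanishing indices contains no two consecutive integers (by (i)) and never contains $t$. Comparing the full (zero-free) sign sequences just to the left and just to the right of $\xi_0$: every adjacency both of whose endpoints lie outside $Z$ has a fixed contribution on both sides; for each interior $i\in Z$ the adjacencies $(i-1,i)$ and $(i,i+1)$ contribute one disagreement on both sides by (i) (and since $Z$ has no two consecutive elements, these overlapping triples tile the remaining adjacencies without conflict); and the only genuine change is from the pair $(\bar g_0,\bar g_1)$, which by (ii) contributes one on the left and zero on the right exactly when $g(\xi_0)=0$. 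Hence $\bar\sigma(\xi_0^-)-\bar\sigma(\xi_0^+)=1$ if $g(\xi_0)=0$ and $0$ otherwise; summing over $\xi_0$ and using that at $a,b$ only interior $\bar g_i$ can vanish (so ``ignoring zeros'' there produces the same count as on either side), we get $\sigma(a)-\sigma(b)=\bar\sigma(a)-\bar\sigma(b)=\#\{\text{distinct real roots of }g\text{ in }(a,b)\}$. I expect the main nuisance to be precisely this last bookkeeping — checking that simultaneous zeros at a single $\xi_0$ do not interfere and that the endpoints $a,b$ are harmless — while the conceptual crux, which is what makes the theorem hold as stated for non-squarefree $g$, is the opening division by $g_t=\gcd(g,g')$, which converts every root of $g$ into a simple root of $\bar g_0$ at which $\bar g_1$ is nonzero.
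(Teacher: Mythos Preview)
The paper does not prove this lemma at all: it is stated as the classical theorem of Sturm, attributed to him, and referenced to \cite{basu}; the authors use it as a black box in the proof of Theorem~\ref{k4free}. So there is no ``paper's own proof'' to compare against.

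That said, your argument is the standard and correct proof of Sturm's theorem. The reduction to the squarefree case by dividing through by $g_t=\gcd(g,g')$ is exactly the right move to make the statement hold as written (without a squarefree hypothesis on $g$), and your local analysis --- that an interior zero of $\bar g_i$ leaves the sign-variation count unchanged while a zero of $\bar g_0$ drops it by one --- is clean and complete. The bookkeeping you flag as the ``main nuisance'' (non-consecutivity of the vanishing indices preventing interference, and the endpoint behaviour when only interior $\bar g_i$ vanish at $a$ or $b$) is handled correctly.
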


Let $P = \{p_1,...,p_N\}$ be an ordered set of $N$ distinct real numbers.   By Lemma \ref{monotone}, one can always find a subset $P'\subset P$ of size $\sqrt{N}$ such that the elements of $P$ are either increasing or decreasing.  If necessary, by a change of variables we can assume that the elements of $P'$ are increasing.  Since this is a negligible loss, we will now only consider increasing point sets.

 Let $P = \{p_1,...,p_N\}$ be an increasing sequence of $N$ distinct real numbers and let $E\subset {P\choose 3}$ be a semi-algebraic relation on $P$ such that

$$E^{\ast}  = \{(x_1,x_2,x_3) \in \mathbb{R}^3: x_1 < x_2 < x_3, \Phi(f_1(x_1,x_2,x_3),...,f_t(x_1,x_2,x_3))=1\},$$

\noindent where the $f_i$ are polynomials of degree at most $t$ and $\Phi$ is a Boolean function.

The \emph{domain} of $P$ is the open interval $(p_1,p_N)$.  For each pair $p_i,p_j \in P$ with $i < j$, we write $\mathcal{P}(p_i,p_j)$ for the set of non-zero univariate polynomials

$$f_l(x_1,p_i,p_j),f_l(p_i,x_2,p_j),f_l(p_i,p_j,x_3),$$

\noindent for $1 \leq l \leq t$.  We say that $(P,E)$ has at most \emph{$r$ roots within its domain} if, for any pair $p_i,p_j \in P$, the univariate polynomials in $\mathcal{P}(p_i,p_j)$  have at most $r$ distinct real roots in total inside the interval $(p_1,p_N)$.  Note that $|\mathcal{P}(p_i,p_j)| \leq 3t$ and $r \leq 3t^2$.  We say that $(P,E)$ is \emph{$K^{(3)}_s$-free} if every collection of $s$ points in $P$ contains a triple not in $E$.  Theorem \ref{off} follows immediately from the observation above and the following theorem.

\begin{theorem}
\label{k4free}
Let $P = \{p_1,...,p_N\}$ be an increasing sequence of $N$ distinct points in $\mathbb{R}$ and let $E\subset {P\choose 3}$ be a semi-algebraic relation on $P$ such that $E$ has complexity at most $t$ and $(P,E)$ has at most $r$ roots within its domain.  If $(P,E)$ is $K^{(3)}_s$-free, then there exists a subfamily $P' \subset P$ such that ${P'\choose 3} \cap E = \emptyset$ and

$$|P'| \geq e^{\alpha^{\epsilon(r + s )}(\log N)^{\epsilon}},$$

\noindent where $0 < \epsilon,\alpha < 1$ depend only on $t$.

\end{theorem}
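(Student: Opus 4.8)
The plan is to prove Theorem~\ref{k4free} by induction on $r+s$, using a divide-and-conquer argument on the domain of $P$ that drives down the number of roots, combined with the off-diagonal Ramsey-type machinery (Lemmas~\ref{gromov}, \ref{spencer}, \ref{alon}) to either extract a large $E$-free set directly or to reduce the clique parameter $s$. The base cases are $r=0$ (when the relevant univariate polynomials have no roots in the domain, so each ``slice'' of $E$ is essentially a transitive $2$-coloring of the triples and we can invoke Lemma~\ref{fox} to get an $E$-free set of size roughly $\log N$), and $s=3$ (when $(P,E)$ being $K_3^{(3)}$-free forces $E\cap\binom{P}{3}=\emptyset$ outright, so $P'=P$ works). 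In the inductive step, first I would partition the domain $(p_1,p_N)$ into a bounded number $m$ of consecutive subintervals $I_1,\dots,I_m$ so that on the ``middle'' triples — those with one point in each of three distinct subintervals — the number of roots of the polynomials in $\mathcal P(p_i,p_j)$ that fall strictly between is reduced, i.e., the relation restricted to these triples behaves as if it had fewer roots; one of the $m$ blocks contains $\ge N/m$ points and I recurse there for the ``within one block'' structure, while the ``between blocks'' structure is handled by the next step.

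The crux is the following dichotomy applied to triples spanning three different blocks. Either the relation $E$ is sparse on such triples, in which case Lemma~\ref{spencer} (or a direct probabilistic deletion) yields an $E$-free set of size a constant power of $N$, which is far larger than the target; or $E$ is dense, in which case Lemma~\ref{gromov} produces disjoint sets $P_1,P_2,P_3$, each of size $\ge \epsilon^{c_3}N$ with all ``rainbow'' triples in $E$. In the latter case, I exploit $K_s^{(3)}$-freeness: inside $P_1$ (say) I cannot find too large a set all of whose triples — together with the guaranteed-$E$ rainbow triples to $P_2,P_3$ — build a $K_s^{(3)}$; more precisely, restricting attention to triples within $P_1$ together with one vertex from $P_2$ and one from $P_3$ forces the relation induced on $P_1$ to be $K_{s-2}^{(3)}$-free (any $(s-2)$-clique in $P_1$ plus one point each from $P_2,P_3$ would be a $K_s^{(3)}$, since the pair across $P_2,P_3$ and all pairs into them are present via rainbow triples; the only possibly-missing triple type is the one entirely inside $P_1$). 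Now apply the induction hypothesis to $P_1$ with parameter $s-2$ in place of $s$ and with the (reduced) root count coming from the block decomposition, obtaining an $E$-free subset of the stated size with $s$ decremented — and the recursion on $s$ contributes the factor $\alpha^{\epsilon(r+s)}$ in the exponent.

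To make the bookkeeping work, I would track a bound of the form $f(r,s,N)\ge e^{\alpha^{\epsilon(r+s)}(\log N)^\epsilon}$ and verify that each recursive call replaces $N$ by $N^{\Omega(1)}$ (from Lemma~\ref{gromov} or the block split) and decreases $r$ or $s$, so that after $O(r+s)$ rounds we terminate; losing a constant power of $N$ at each of the $O(r+s)$ steps costs a factor that is absorbed into the $\alpha^{\epsilon(r+s)}$ term, and the $(\log N)^\epsilon$ with $\epsilon<1$ is exactly what survives a bounded number of ``take a root'' iterations each costing a polynomial loss. I expect the main obstacle to be the root-reduction step: one must argue, via Sturm's theorem (Lemma~\ref{sturm}) applied to the $O(t)$ univariate polynomials in $\mathcal P(p_i,p_j)$, that the domain can be cut into boundedly many pieces on which the ``active'' roots decrease, and that this cut can be chosen uniformly over all pairs $p_i,p_j$ simultaneously (not just for one pair) while still retaining $\ge N/m$ points in some block — this simultaneous, uniform control of root counts over all slices is the delicate part, and it is where the constants $\epsilon$ and $\alpha$ depending only on $t$ get pinned down.
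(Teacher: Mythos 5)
There is a genuine gap at the heart of your inductive step, namely the dense case. Lemma~\ref{gromov} only controls the \emph{rainbow} triples, i.e.\ those with exactly one point in each of $P_1,P_2,P_3$. An $(s-2)$-clique $A\subset P_1$ together with one point $p_2\in P_2$ and one point $p_3\in P_3$ contains, besides the triples inside $A$ and the rainbow triples $(a,p_2,p_3)$, all the triples with \emph{two} points in $A$ and one point in $\{p_2,p_3\}$; these are of type $(2,1,0)$, are not rainbow, and are not guaranteed to lie in $E$ by Lemma~\ref{gromov}. So your parenthetical claim that ``the only possibly-missing triple type is the one entirely inside $P_1$'' is false, and $K_s^{(3)}$-freeness of $(P,E)$ does not force $(P_1,E)$ to be $K_{s-2}^{(3)}$-free. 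This is precisely why the paper states Theorem~\ref{david} separately under the stronger $K_4^{(3)}\setminus e$-free hypothesis (which kills the $(2,1,0)$ triples for free and makes the Spencer/Gromov density dichotomy work); your proposal essentially transplants that argument to Theorem~\ref{k4free}, where it breaks. A secondary quantitative issue in the same dichotomy: with a constant density threshold, Lemma~\ref{spencer} gives an independent set of only constant size (it yields roughly $N\cdot(\epsilon N^2)^{-1/2}$), not a power of $N$; one needs a sub-constant threshold as in the proof of Theorem~\ref{david}, which then feeds quasi-polynomial losses into the dense branch.

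The paper's actual proof of Theorem~\ref{k4free} is quite different and exploits the one-dimensional order structure to control the $(2,1)$-type triples that your argument leaves uncontrolled. It first applies Lemma~\ref{alon} twice (with respect to $p_1$ and $p_N$) and removes ``bad'' triples via Lemma~\ref{spencer}, then partitions the surviving set into $M=\sqrt{N^{\alpha_2}}$ consecutive blocks (not a bounded number: since the roots depend on the pair, no bounded partition can reduce root counts uniformly over all pairs, which is the difficulty you flag but do not resolve). Within each block it defines the \emph{pair} relation ``the polynomials of this pair have fewer than $r$ roots in the block's interval'', shows via Sturm's theorem (Lemma~\ref{sturm}) that this is semi-algebraic of bounded complexity, and homogenizes it by Lemma~\ref{alon}: either some block yields a subset on which the root count drops to $r-1$ (induct on $r$), or in every block all roots are confined to that block's interval, so every cross-block $(2,1)$ triple has the same $E$-status as the corresponding triple through $p_1$ or $p_N$; this either makes some block $K_{s-1}^{(3)}$-free (induct on $s$) or forces all $(2,1)$ triples out of $E$. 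Rainbow triples across blocks are then homogenized by a random choice of $M^{1/9}$ blocks, and the final $E$-free set is a \emph{product} of two recursive applications (one on a transversal of the chosen blocks, one inside each selected block), which is what produces the bound $e^{\alpha^{\epsilon(r+s)}(\log N)^{\epsilon}}$. Finally, a small point on your base case: for $r=0$ the transitivity argument plus Lemma~\ref{fox} gives an $E$-free set of size $\Omega(N^{1/(s-2)})$, not ``roughly $\log N$''; a $\log N$ base case would actually fall short of the stated bound, since $e^{\alpha^{\epsilon(r+s)}(\log N)^{\epsilon}}$ exceeds $\log N$ for large $N$.
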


\begin{proof}

The proof is by induction on $N$, $r$, and $s$.  The base cases are $s = 3$, $r = 0$, or $N \leq (6t^2)^{2/\alpha}$.  When $N \leq (6t^2)^{2/\alpha}$, the statement holds trivially for sufficiently small $\epsilon$.  If $s = 3$, then again the statement follows immediately by taking $P' = P$.  For $r = 0$, notice that $E$ and $\overline{E} = {P\choose 3}\setminus E$ are both transitive.  Indeed, let $p_{i_1},p_{i_2},p_{i_3},p_{i_4} \in P$ be such that $p_{i_1} < \cdots < p_{i_4}$ and $(p_{i_1},p_{i_2},p_{i_3}),(p_{i_2},p_{i_3},p_{i_4}) \in E$.  Since the sign pattern of each univariate polynomial in $\mathcal{P}(p_{i_1},p_{i_2})\cup \mathcal{P}(p_{i_3},p_{i_4})$ does not change inside the interval $(p_1,p_N)$, this implies that $(p_{i_1},p_{i_2},p_{i_4}),(p_{i_1},p_{i_3},p_{i_4}) \in E$.  Likewise, if $(p_{i_1},p_{i_2},p_{i_3}),(p_{i_2},p_{i_3},p_{i_4}) \not\in E$, then we must have $(p_{i_1},p_{i_2},p_{i_4}),(p_{i_1},p_{i_3},p_{i_4}) \not\in E$.  Since $(P,E)$ is $K_s^{(3)}$-free, by Lemma \ref{fox}, there exists a subset $P_0\subset P$ such that $|P_0| \geq \Omega(N^{1/(s-2)})$ and ${P_0\choose 3}\cap E = \emptyset$.

Now assume that the statement holds if $r' \leq r$, $s' \leq s$, $N' \leq N$ and not all three inequalities are equalities.  Let $f_1,...,f_t\in \mathbb{R}[x_1,x_2,x_3]$ be polynomials of degree at most $t$ such that

$$E^{\ast}  = \{(x_1,x_2,x_3) \in \mathbb{R}^3: x_1 < x_2 < x_3, \Phi(f_1(x_1,x_2,x_3),...,f_t(x_1,x_2,x_3))=1\}.$$

By applying Lemma \ref{alon} twice, first fixing point $p_1$ and then fixing point $p_N$, there exists a subset $P_1\subset P$ of size $N^{\alpha_1}$, where $\alpha_1 > 0$ depends only on $t$, such that, for $j_1 < j_2$,
$$(p_1,p_{j_1},p_{j_2}) \in E  \textnormal{ for all }  p_{j_1},p_{j_2} \in P_1\setminus\{p_1\} \hspace{.5cm}\textnormal{ or }\hspace{.5cm} (p_1,p_{j_1},p_{j_2}) \not\in E \textnormal{ for all } p_{j_1},p_{j_2} \in P_1\setminus\{p_1\},$$

\noindent and
$$(p_{j_1},p_{j_2},p_N) \in E  \textnormal{ for all }  p_{j_1},p_{j_2} \in P_1\setminus\{p_N\} \hspace{.5cm}\textnormal{ or }\hspace{.5cm} (p_{j_1},p_{j_2},p_N) \not\in E \textnormal{ for all } p_{j_1},p_{j_2} \in P_1\setminus\{p_N\}.$$

\noindent We call an ordered triple $(p_i,p_j,p_m)$ \emph{bad} if there exists a polynomial $f \in \mathcal{P}(p_i,p_j)$ such that $f$ has a root at $p_m$ or if there exists a polynomial $f \in \mathcal{P}(p_j,p_m)$ such that $f$ has a root at $p_i$. Since $\mathcal{P}(p_i,p_j)\cup \mathcal{P}(p_j,p_m)$  gives rise to at most $6t^2$ distinct roots, $P_1$ has at most $3t^2|P_1|^2$ bad triples.  By Lemma \ref{spencer}, there exists a subset $P_2 \subset P_1$ such that $|P_2| \geq N^{\alpha_2}$, where $\alpha_2 > 0$ depends only on $t$, and $P_2$ has no bad triple.

Let $P_2 = \{q_1,...,q_{N^{\alpha_2}}\}$ with $q_1 < \cdots < q_{N^{\alpha_2}}$.  We now partition $P_2 = Q_1\cup \cdots \cup Q_{M}$ into $M = 
\sqrt{N^{\alpha_2}}$ parts, such that

$$Q_i = \left\{q_j : (i-1)\sqrt{N^{\alpha_2}} + 1 \leq j \leq  i\sqrt{N^{\alpha_2}}\right\}.$$

\noindent  Let $I_i$ be the domain of $Q_i$.  We define the relation $E_i\subset {Q_i\choose 2}$ on $Q_i$, where $(q_{j_1},q_{j_2}) \in E_i$ if the non-zero univariate polynomials in $\mathcal{P}(q_{j_1},q_{j_2})$ have (in total) strictly less than $r$ roots inside the open interval $I_i$.

For $l \in \{1,...,t\}$, the Euclidean Algorithm implies that the univariate polynomial $f_l(x_1,x_2,x_3)$ in $x_1$ has a Sturm sequence of length at most $t$.  The same is true for the univariate polynomials $f_l(x_1,x_2,x_3)$ in $x_2$ and $x_3$.  Since there are no bad triples in $P_2$, we can apply Sturm's Lemma \ref{sturm}, which tells us that $E_i$ depends only on the polynomials $f_1,...,f_t$, their Sturm sequences, the endpoints of $I_i$, and $r \leq 6t^2$.  Hence, $E_i$ is semi-algebraic with complexity at most $t' = t'(t)$.  By Lemma \ref{alon}, there exists $S_i \subset Q_i$ such that $|S_i| \geq N^{\alpha}$, where $\alpha > 0$ depends only on $t$, and either

$${S_i\choose 2} \subset E_i \hspace{.5cm}\textnormal{or}\hspace{.5cm} {S_i\choose 2}\cap E_i = \emptyset.$$

\noindent We may assume that $\alpha < \alpha_2/24$.  If ${S_i\choose 2} \subset E_i$ for some $i$, then $(S_i, E)$ has at most $r-1$ roots within its domain.  By the induction hypothesis, there exists a subset $P_3 \subset S_i$ such that ${P_3\choose 3}\cap E = \emptyset$ and

$$|P_3| \geq e^{\alpha^{\epsilon  (r-1 + s)}(\log N^{\alpha})^{\epsilon}} = e^{\alpha^{\epsilon(r+ s)}(\log N)^{\epsilon}},$$

\noindent and we are done.  Therefore, we can assume that ${S_i\choose 2}\cap E_i = \emptyset$ for all $i$.  Hence, for any $q_{j_1},q_{j_2}\in S_i$ with $j_1 < j_2$, all $r$ roots (within the interval $(p_1,p_N)$) of the univariate polynomials in $\mathcal{P}(q_{j_1},q_{j_2})$ lie inside $I_i$.  Now we make the following observation.

\begin{observation}
For any two parts $S_{i_1}$ and $S_{i_2}$, where $i_1 < i_2$, either $(q_{j_1},q_{j_2},q_{j_3}) \in E$ for all $q_{j_1},q_{j_2} \in S_{i_1}$ and $q_{j_3}\in S_{i_2}$ or $(q_{j_1},q_{j_2},q_{j_3})\not\in E$ for all $q_{j_1},q_{j_2} \in S_{i_1}$ and $q_{j_3} \in S_{i_2}$.  Likewise, either $(q_{j_1},q_{j_2},q_{j_3}) \in E$ for all $q_{j_1} \in S_{i_1}$ and $q_{j_2},q_{j_3}\in S_{i_2}$ or $(q_{j_1},q_{j_2},q_{j_3})\not\in E$ for all $q_{j_1}\in S_{i_1}$ and $q_{j_2}, q_{j_3} \in S_{i_2}$.

\end{observation}

\begin{proof}

We first prove the first part of the statement.  Since all $r$ roots of the non-zero univariate polynomials in $\mathcal{P}(q_{j_1},q_{j_2})$ lie inside the 
interval $I_{i_1}$ and $q_{j_3}, p_N$ lie to the left of $I_{i_1}$, we have that  $f_l(q_{j_1},q_{j_2},q_{j_3})$ and $f_l(q_{j_1},q_{j_2},p_N)$ have the same 
sign for all $1 \leq l \leq t$. Therefore  

$$(q_{j_1},q_{j_2},q_{j_3})\in E \Leftrightarrow (q_{j_1},q_{j_2},p_N)\in E.$$

\noindent Since our sets are subsets of $P_1$, we have either $(q_{j_1},q_{j_2},q_{j_3}) \in E$ for all $q_{j_1},q_{j_2} \in S_{i_1}$ and $q_{j_3} \in S_{i_2}$ or $(q_{j_1},q_{j_2},q_{j_3}) \not\in E$ for all $q_{j_1},q_{j_2} \in S_{i_1}$ and $q_{j_3} \in S_{i_2}$.  The second part of the statement follows by the same argument.
\end{proof}

If there exist two parts $S_{i_1},S_{i_2}$ with $i_1 < i_2$ such that $(q_{j_1},q_{j_2},q_{j_3}) \in E$ for all $q_{j_1},q_{j_2} \in S_{i_1}$ and $q_{j_3} \in S_{i_2}$, then $(S_{i_1},E)$ is $K_{s-1}^{(3)}$-free.  By the induction hypothesis, there exists a subset $P_4\subset S_{i_1}$ such that ${P_4\choose 3}\cap E = \emptyset$ and
$$|P_4| \geq e^{\alpha^{\epsilon  (r + s-1 )}(\log N^{\alpha})^{\epsilon}} =  e^{\alpha^{\epsilon( r  +s)}(\log N)^{\epsilon}}.$$

\noindent The same is true if $(q_{j_1},q_{j_2},q_{j_3}) \in E$ for all $q_{j_1} \in S_{i_1}$ and $q_{j_2},q_{j_3} \in S_{i_2}$.  Therefore, we can assume that for $i_1 < i_2$ and $j_1 < j_2 < j_3$,
\begin{equation} \label{21}
(q_{j_1},q_{j_2},q_{j_3}) \not\in E \hspace{.5cm}\textnormal{for all}\hspace{.5cm}  q_{j_1},q_{j_2} \in S_{i_1},\hspace{.2cm} q_{j_3} \in S_{i_2} \hspace{.5cm}\textnormal{and for all}\hspace{.5cm}  q_{j_1} \in S_{i_1},\hspace{.2cm} q_{j_2},q_{j_3} \in S_{i_2}.
\end{equation}

Set $S = S_1\cup \cdots \cup S_M$ and recall that $M = \sqrt{N^{\alpha_2}}$ and $|S_i| = N^{\alpha}$.  For $i_1 < i_2$, let $q_{j_1}\in S_{i_1}$ and $q_{j_2} \in S_{i_2}$.  Then we say that the unordered triple $(q_{j_1},q_{j_2},S_i)$ is \emph{homogeneous} if

\begin{enumerate}
 \item for $i > i_2$, $(q_{j_1},q_{j_2},q_i) \in E$ for all $q_i \in S_i$ or $(q_{j_1},q_{j_2},q_i) \not\in E$ for all $q_i \in S_i$, or

 \item for $i_1 < i < i_2$, $(q_{j_1},q_i,q_{j_2}) \in E$ for all $q_i \in S_i$ or $(q_{j_1},q_i, q_{j_2}) \not\in E$ for all $q_i \in S_i$, or

  \item for $i < i_1  < i_2$, $(q_i,q_{j_1},q_{j_2}) \in E$ for all $q_i \in S_i$ or $(q_i,q_{j_1}, q_{j_2}) \not\in E$ for all $q_i \in S_i$.
\end{enumerate}

\noindent 
Since $q_{j_1},q_{j_2}$ give rise to at most $3t$ polynomials of degree at most $t$, there are at most $3t^2$ sets $S_i$ such that $(q_{j_1},q_{j_2},S_i)$ is not 
homogeneous. Indeed, the domain of every such $S_i$ must contain a root of one of these polynomials and the total number of roots is at most $3t^2$. 

We pick $b$ distinct members of the collection $\{S_1,...,S_M\}$ uniformly at random.  Let $X$ denote the number of non-homogeneous triples 
$(q_{j_1},q_{j_2},S_i)$, where $S_i$ is a set from our randomly chosen collection and $q_{j_1}$ and $q_{j_2}$ also lie in distinct sets from this collection.  
Since $|S_i|=N^{\alpha}$, then

$$\mathbb{E}[X] \leq {M\choose 2}\left(N^{\alpha}\right)^2 3t^2 \left(\frac{b}{M}\right)^3\leq \frac{2t^2b^3N^{2\alpha}}{M}.$$

\noindent By setting $b = M^{1/9}$ and since $\alpha < \alpha_2/24$, we have

$$\mathbb{E}[X] \leq \frac{2t^2}{M^{1/2}} = \frac{2t^2}{N^{\alpha_2/4}} \leq \frac{2t^2}{N^{6\alpha}}.$$

\noindent   Since  $N > (6t^2)^{2/\alpha}$, we have $\mathbb{E}[X] < 1$.  Hence, there exists a subset $T\subset S$ such that $T = T_1\cup\cdots \cup T_b$ where $b = N^{\alpha_2/18}$, $|T_i| = N^{\alpha}$, and, for any $q_{j_1}, q_{j_2}$ from distinct subsets, $(q_{j_1},q_{j_2},T_i)$ is homogeneous.  Therefore, we obtain the following.

\begin{observation}
\label{111}
For parts $T_{i_1}, T_{i_2}, T_{i_3}$, where $i_1 < i_2 < i_3$, either $(q_{j_1},q_{j_2},q_{j_3}) \in E$ for all $q_{j_1} \in T_{i_1}, q_{j_2} \in T_{i_2}, q_{j_3} \in T_{i_3}$ or $(q_{j_1},q_{j_2},q_{j_3}) \not\in E$ for all $q_{j_1} \in T_{i_1}, q_{j_2} \in T_{i_2}, q_{j_3} \in T_{i_3}$.

\end{observation}

\begin{proof}
Let $q_{j_1} \in T_{i_1}, q_{j_2} \in T_{i_2}$, and $q_{j_3} \in T_{i_3}$ be such that $(q_{j_1},q_{j_2},q_{j_3}) \in E$.  It suffices to show that for  $a_{1} \in T_{i_1}, a_{2} \in T_{i_2}$, and $a_{3} \in T_{i_3}$ we have $(a_1,a_2,a_3) \in E$.  Since $(q_{j_1},q_{j_2},T_{i_3})$ is homogeneous, we have $(q_{j_1},q_{j_2},a_3) \in E$.  Likewise, since $(q_{j_1},T_{i_2},a_3)$ is homogeneous, we have $(q_{j_1},a_2,a_3) \in E$.  Finally, since $(T_{i_1},a_2,a_3)$ is homogeneous, we have $(a_1,a_2,a_3) \in E$.   If $(q_{j_1},q_{j_2},q_{j_3}) \not\in E$, then by the same argument we have $(a_1,a_2,a_3) \not\in E$.
\end{proof}

Let $T'$ be a point set formed by selecting one point from each $T_i$.  Then, by applying the induction hypothesis on $(T',E)$ 
we can find a set of indices $J$ such that $|J| \geq e^{\alpha^{\epsilon( r + s )}(\log N^{\alpha_2/18})^{\epsilon}}$ and for every $j_1<j_2<j_3$ in $J$
all triples with one vertex in each $T_{j_i}$ do not satisfy $E$. Applying the induction hypothesis again to every $(T_j, E), j \in J$ we obtain a collection of 
subsets $U_j \subset T_j, |U_j|\geq e^{\alpha^{\epsilon( r + s )}(\log N^{\alpha})^{\epsilon}}$ such that ${U_j \choose 3} \cap E =\emptyset$. Let 
$P'=\cup_{j \in J}U_j$. Then, by 
(\ref{21}), this subset satisfies ${P' \choose 3} \cap E =\emptyset$. Moreover,

\begin{eqnarray*}
|P'| & \geq & e^{\alpha^{\epsilon( r + s )}(\log N^{\alpha_2/18})^{\epsilon}}e^{\alpha^{\epsilon( r + s )}(\log N^{\alpha})^{\epsilon}}\\\\
 & \geq & e^{\alpha^{\epsilon( r + s )}(\log N^{\alpha})^{\epsilon}}e^{\alpha^{\epsilon( r + s )}(\log N^{\alpha})^{\epsilon}}\\\\
  & \geq &  e^{\alpha^{\epsilon( r + s )}(\log N)^{\epsilon}},
\end{eqnarray*}

\noindent for sufficiently small $\epsilon = \epsilon(t)$ such that $2\alpha^{\epsilon}\geq 1$.
\end{proof}

\noindent \emph{Proof of Theorem \ref{david}:}  The proof is by induction on $N$.   The base case $N\leq 2^{6c_3}$ is trivial for sufficiently small $c = c(d,t)$, where $c_3$ is the constant from Lemma \ref{gromov}.  Furthermore, $c$ will be sufficiently small so that for all $N > 2^{6c_3}$,

\begin{equation}
\label{half}
\left(\log N - 3c_3\log^{c} N\right)^{c} \geq \log^{c} N  - 1.\end{equation}

    \noindent Set $\alpha = 2^{-3\log^{c} N}$. Now assume that the statement holds for all $N' < N$, where $N > 2^{6c_3}$.  The proof falls into two cases.

\medskip

\noindent \emph{Case 1.} Suppose $|E| \leq \alpha{N\choose 3}$.  Then, by Lemma \ref{spencer}, there exists $P'\subset P$ such that $P'\cap E = \emptyset$ and

$$|P'| \geq \frac{2}{3} N\left(\frac{N   }{3 \alpha{N\choose 3}}\right)^{1/2} \geq \frac{2\sqrt{2}}{3}2^{(3/2)\log^{c} N} \geq 2^{\log^{c} N},$$

\noindent so we are done.

\medskip

\noindent \emph{Case 2.}  Suppose $|E| > \alpha{N\choose 3}$.  By Lemma \ref{gromov}, there exist disjoint subsets $P_1,P_2,P_3 \subset P$ such that $|P_i| \geq \alpha^{c_3}N$ and, for every $p_1 \in P_1$, $p_2 \in P_2$, and $p_3 \in P_3$, we have $(p_1,p_2,p_3) \in E$.  Since $(P,E)$ is $K^{(3)}_4\setminus e$-free, we have $(p_1,p_2,p_3) \not\in E$ for every $p_1,p_2 \in P_1$ and $p_3 \in P_2$ and, likewise, for every $p_1 \in P_1$ and $p_2,p_3 \in P_2$.  By applying the induction hypothesis on $(P_1,E)$ and $(P_2,E)$, there exists $P'\subset P_1\cup P_2$ such that $P'\cap E = \emptyset$ and

$$|P'| \geq  2\cdot \left(2^{ \log^{c}(\alpha^{c_3}N)}\right) = 2^{1 + \left(\log N - 3c_3 \log^{c} N\right)^{c}}.$$

\noindent By (\ref{half}), we have $|P'| \geq 2^{\log^{c} N}$. $\hfill\square$

\section{Concluding remarks}

1.  We showed that for every integer $k\geq 2$ there exist $d = d(k)$ and $t = t(k)$ such that $R^{d,t}_{k}(n) \geq \twr_{k-1}(c_2 n)$, where $c_2$ depends on $k$.  Our construction gives dimension $d = 2^{k-3}$.  Instead of starting from the construction in Subsection \ref{k3}, we could start from the construction in Subsection \ref{k4} to get dimension $d = 2^{k-4}$ for $k \geq 4$.  It would be interesting to see if one could give constructions for all $k\geq 5$ in, say, two dimensions. The result of Bukh and Matou\v{s}ek mentioned in the introduction, that $R^{1,t}_{k}(n)$ is at most double exponential for fixed $k$ and $t$, shows that this cannot be done in one dimension.

\medskip

\noindent 2.  It would be very interesting if one could improve the bounds in the off-diagonal case.  The crucial case is when $k = 3$ and we conjecture that $R^{d,t}_{3}(s,n) = O(n^c)$, where $c$ depends only on $d, t$ and $s$.  Notice that this would give another proof that $ES(n)$ is at most exponential in a power of $n$.  Indeed, by the argument discussed in the introduction, there exists a $t$ such that $ES(n) \leq R^{2,t}_{4}(5,n)$.  A careful analysis of the proof of Theorem~\ref{semirec} gives $R^{d,t}_{k}(s,n) \leq 2^{C_1M\log M}$, where $M = R^{d,t}_{k-1}(s-1,n-1)$ and $C_1 = C_1(d,k,t)$.  Therefore, $R^{d,t}_{3}(4,n) = O(n^c)$ implies that

 $$ES(n) \leq  R^{2,t}_{4}(5,n) \leq 2^{C_2n^c\log n},$$

 \noindent where $C_2 = C_2(t)$.

\medskip
\noindent 3. A lower bound for the off-diagonal case may also be achieved by using the semi-algebraic version of the stepping-up lemma. In particular, we may show that for every integer $k \geq 3$ there exist $d = d(k), t = t(k)$, and $s = s(k)$ such that $R^{d,t}_k(s,n) \geq \twr_{k-2} (c' n)$, where $c'$ depends only on $k$. The original stepping-up lemma gives $s(k) \leq 2^{k-1} - k + 3$ and a recent variant \cite{conlon2} gives the linear bound $s(k) \leq \lceil \frac{5}{2} k\rceil - 3$ for $k \geq 4$. However, we conjecture that the result should already be true with $s = k+1$. 

\medskip

\noindent 4.  Let us remark that our results and their proofs generalize to multiple relations (colors).  More precisely, let $R^{d,t}_{k,q}(n)$ be the least integer $N$ such that any set of $N$ points in $\mathbb{R}^d$ with semi-algebraic relations $E_1,...,E_q\subset {P\choose k}$, where each $E_i$ has complexity at most $t$, contains $n$ members such that every $k$-tuple belongs to $E_i$ for some fixed $i$ or no such $k$-tuple belongs to $E_i$ for all $i$.  Then

$$R_{k,q}^{d,t}(n) \leq \twr_{k-1}(n^c),$$

\noindent where $c = c(d,k,t,q)$.  In particular, for $q \geq 3$, corresponding to $q + 1 \geq 4$ colors, this is a much smaller bound than in the general case, as $R_k(n,n,n,n) = \twr_{k}(\Theta(n))$, where $R_k(n,n,n,n)$ is the 4-color Ramsey number. 

\medskip

\noindent 5. One can also prove the following strengthening of Theorem \ref{main2} for four or more colors. For any $k \geq 2$ and $\alpha > 0$, there 
exist $d = d(k, \alpha)$, $t = t(k, \alpha)$, and $c' = c'(k, \alpha)$ such that, for $q \geq 3$, 
$R^{d,t}_{k,q}(n) \geq \twr_{k-1}(c' n^{\alpha})$. The key extra tool is a variant of the stepping-up lemma (see \cite{graham}) which allows one to step up from 
$k = 2$ to $k = 3$ at the cost of doubling the number of colors. For our base case, we need to know that for any $\alpha > 0$ there exist $d$ and $t$ such that, 
for $n$ sufficiently large, $R_2^{d,t}(n) \geq n^\alpha$. This follows from the following famous construction of Frankl and Wilson \cite{frankl}. Let $p$ be a prime 
and let $r=p^2-1$. The vertices of our graph $G$ are all subsets of size $r$ from $\{1, \ldots, m\}$ and two vertices are adjacent if the corresponding sets have 
intersection of size congruent to $-1\ (\mbox{mod } p)$. Let $n={m \choose p - 1}$. It was shown in \cite{frankl} that this graph has ${m \choose r}=\Omega(n^{p+1})$ 
vertices and no clique or independent set of size larger than $n$. This graph can be realized in dimension $d=r$ using a semi-algebraic 
relation of description complexity $t=r^2$. Every vertex of $G$ will correspond to a vector $(x_1, x_2, \ldots, x_r)$ where $x_1 < x_2 < \cdots < x_r$ are the elements of the 
corresponding subset of $\{1, \ldots, m\}$ in increasing order. For any two vertices $(x_1, \ldots, x_r)$ and $(y_1, \ldots, y_r)$ the Boolean function will check whether 
the number of equalities $x_i=y_j, 1 \leq i,j \leq r$ (i.e, the size of the intersection) is congruent to $-1 \ (\mbox{mod } p)$. 
The lower bound on $R^{d,t}_{k,q}(n)$ now follows by first stepping-up from $k = 2$ to 
$k=3$, thereby doubling the number of colors to $4$, and then applying the method of Section \ref{lower} for higher uniformities, which does not increase the 
number of colors. It would be interesting to know whether a similar improved result holds for the $2$-color case, $q = 1$.

\medskip
\noindent 6. For $k \geq 4$, the upper bound in Lemma \ref{stepup} and Corollary \ref{smallhom} can be improved by roughly a factor of $2$ using a slight variation of the stepping-up lemma from \cite{conlon2}.  In particular, starting with a $4$-uniform construction of order $N$ with no homogeneous set of order $n$, such as the one in Subsection \ref{k4}, we get a construction of order $\twr_{k-3}(N)$ with no homogeneous set of order $n+\lceil 5k/2 \rceil -10$.  This follows from Theorem 3 in \cite{conlon2}.

\end{document}